\newcommand{\cC}{\mathcal{C}}
\newcommand{\cD}{\mathcal{D}}
\newcommand{\cG}{\mathcal{G}}
\newcommand{\cL}{\mathcal{L}}
\newcommand{\cF}{\mathcal{F}}
\newcommand{\cN}{\mathcal{N}}
\newcommand{\cW}{\mathcal{W}}
\newcommand{\R}{\texttt{R}}
\newcommand{\Q}{\texttt{Q}}
\def\dres{\partial{\rm Res}}
\def\ord{\operatorname{ord}}
\def\mod{{\rm mod}}
\def\rk{\operatorname{rk}}
\def\Ini{\operatorname{Ini}}
\numberwithin{equation}{section}
\newtheorem{Theorem}{Theorem}[section]
\newtheorem{Corollary}[Theorem]{Corollary}
\newtheorem{Lemma}[Theorem]{Lemma}
\newtheorem{Proposition}[Theorem]{Proposition}
 { \theoremstyle{definition}
\newtheorem{Definition}[Theorem]{Definition}
\newtheorem{Example}[Theorem]{Example}
\newtheorem{Remark}[Theorem]{Remark}
\newtheorem*{Algorithm}{Algorithm}
}
\begin{document}
\allowdisplaybreaks

\newcommand{\arXivNumber}{1902.01361}

\renewcommand{\thefootnote}{}

\renewcommand{\PaperNumber}{101}

\FirstPageHeading

\ShortArticleName{Commuting Ordinary Differential Operators and the Dixmier Test}

\ArticleName{Commuting Ordinary Differential Operators\\ and the Dixmier Test\footnote{This paper is a~contribution to the Special Issue on Algebraic Methods in Dynamical Systems. The full collection is available at \href{https://www.emis.de/journals/SIGMA/AMDS2018.html}{https://www.emis.de/journals/SIGMA/AMDS2018.html}}}

\Author{Emma PREVIATO~$^\dag$, Sonia L.~RUEDA~$^\ddag$ and Maria-Angeles ZURRO~$^\S$}

\AuthorNameForHeading{E.~Previato, S.L.~Rueda and M.A.~Zurro}

\Address{$^\dag$~Boston University, USA}
\EmailD{\href{mailto:ep@math.bu.edu}{ep@math.bu.edu}}

\Address{$^\ddag$~Universidad Polit\'ecnica de Madrid, Spain}
\EmailD{\href{mailto:sonialuisa.rueda@upm.es}{sonialuisa.rueda@upm.es}}

\Address{$^\S$~Universidad Aut\'onoma de Madrid, Spain}
\EmailD{\href{mailto:mangeles.zurro@uam.es}{mangeles.zurro@uam.es}}

\ArticleDates{Received February 04, 2019, in final form December 23, 2019; Published online December 30, 2019}

\Abstract{The Burchnall--Chaundy problem is classical in differential algebra, seeking to describe all commutative subalgebras of a ring of ordinary differential operators whose coefficients are functions in a given class. It received less attention when posed in the (first) Weyl algebra, namely for polynomial coefficients, while the classification of commutative subalgebras of the Weyl algebra is in itself an important open problem. Centralizers are maximal-commutative subalgebras, and we review the properties of a basis of the centralizer of an operator $L$ in normal form, following the approach of K.R.~Goodearl, with the ultimate goal of obtaining such bases by computational routines. Our first step is to establish the \texttt{Dixmier test}, based on a lemma by J.~Dixmier and the choice of a suitable filtration, to give necessary conditions for an operator $M$ to be in the centralizer of $L$. Whenever the centralizer equals the algebra generated by $L$ and~$M$, we call $L$, $M$ a Burchnall--Chaundy~(BC) pair. A construction of BC pairs is presented for operators of order $4$ in the first Weyl algebra. Moreover, for true rank~$r$ pairs, by means of differential subresultants, we {\it effectively compute} the fiber of the rank $r$ spectral sheaf over their spectral curve.}

\Keywords{Weyl algebra; Ore domain; spectral curve; higher-rank vector bundle}

\Classification{13P15; 14H70}

\renewcommand{\thefootnote}{\arabic{footnote}}
\setcounter{footnote}{0}

\section{Introduction}\label{sec-Introduction}

In the 1923 seminal paper by Burchnall and Chaundy \cite{BC}, the authors proposed to describe all pairs of commuting differential operators that are not simply contained in a polynomial ring~${\bf C}[M]$\footnote{Although the field of coefficients is not mentioned in~\cite{BC}, we work over the complex numbers~${\bf C}$ in this paper unless
otherwise specified.}, $M\in\cD$ (cf.\ Section~\ref{sec-Preliminares} below for
notation). We note that, whenever two differential operators $A$, $B$, commute
with an operator $L$ of order greater than zero, then they commute with each
other (cf.\ Corollary~\ref{cor-maxcom}), and therefore
maximal-commutative subalgebras of $\cD$ are centralizers; these are the main
objects we seek to classify. In addition,
we will always assume that a commutative subalgebra
 contains a normalized element
$L=\partial^n+u_{n-2}\partial^{n-2}+\cdots +u_0$, although
some proviso is needed (cf., e.g., \cite{BZ}), except in the `formal' case
when the coefficients are
just taken to be formal power series.
 We will say that the {\it Burchnall--Chaundy $($BC$)$ problem} asks when the
centralizer ${\mathcal C}_{\mathcal D}(L)$ of an operator $L$ is not a polynomial ring
(which we regard as a `trivial' case, for example ${\bf C}[G]$,
{with $L$ a~power of some $G\in\mathcal{D}$}) and we call such an $L$
a~``BC solution''. Burchnall and Chaundy immediately make the observation that if the orders of
two commuting $L$ and $B$ are coprime, then either one is a BC solution.
Eventually~\cite{BC1}, they were able to classify the commutative subalgebras
${\bf C}[L,B]$ of rank one~-- the rank, defined in
Section \ref{sec-TrueRk} for any subset of $\cD$, is the greatest common
divisor of the orders of all elements of ${\bf C}[L,B]$.
The classification problem is wide open in higher (than one) rank, although a~theoretical geometric description was given~\cite{K78, Mu2}.

In 1968 Dixmier gave an example \cite{Dix} of BC solution: he showed that for any complex number $\alpha$ in ${\bf C}$ the differential operators
\begin{gather}\label{eq-Dixintro}
 L=H^2+2x\qquad \text{and} \qquad B=H^3+\frac{3}{2}(xH+Hx),\qquad \mbox{with} \quad H={\partial^2+x^3+\alpha}
\end{gather}
identically satisfy the algebraic equation $B^2=L^3-\alpha$, and moreover, that the algebra ${\bf C}[L,B]$ is a~maximal-commutative subalgebra of the first Weyl algebra $A_{1}({\bf C})$, since it is the centrali\-zer~$\cC(L)$ of the operator $L$ in $A_{1}({\bf C})$, thus providing the first example of BC solution with ${\bf C}[L,B]$ of higher rank\footnote{In fact, the rank is two, but under the antiautomorphism of $A_1({\bf C})$ that interchanged differentiation and independent variable, ${\bf C}[L,B]$ also provides an example of rank three.} provided $\alpha\ne 0$.

To give a rough idea of the difference between rank one and higher, we recall that centra\-li\-zers~${\mathcal C}_{\mathcal D}(L)$ have quotient fields that are function fields of one variable, therefore can be seen as affine rings of curves, and in a formal sense these are {\it spectral curves}. Burchnall and Chaundy's theory for rank one shows that the algebras that correspond to a fixed curve make up the (generalized) Jacobian of that curve, and the $x$ flow is a holomorphic vector field on it. We may (formally) view this
as a ``direct'' spectral problem; the ``inverse'' spectral problem allows us to reconstruct the coefficients of the operators (in terms of theta functions) from the data of a~point on the Jacobian (roughly speaking, a~rank-one sheaf on the curve). The case of rank $r>1$ corresponds to a vector bundle of rank $r$ over the spectral curve: there is no
explicit solution to the ``inverse'' spectral problem (despite considerable progress achieved in \cite{KN1, KN2,KN2+}), except for the case of elliptic spectral curves; we will refer to some of the relevant literature below, but we will not attempt at completeness because our goal here is narrower, and the higher-rank literature is quite hefty.

We now describe the goals and results of this paper. There are several properties, relevant to the classification and explicit description of commutative subalgebras, both in the case of ${\mathcal D}$ and of $A_{1}(\mathbf{C})$, that are difficult to discern: our plan is to address them with the aid of computation.

First, a centralizer $\cC_{\cD}(L)$ is known to be a finitely generated free ${\bf C}[L]$-module and we use a~result by Goodearl in \cite{Good} to the effect that the cardinality of any basis is a~divisor of $n=\ord(L)$. By restricting attention to polynomial coefficients, in Section~\ref{subsect-grading} we determine the initial form of the elements in the centralizer of $L$, by automating the ``\texttt{Dixmier test}'' by means of a~suitable filtration. As a~consequence, we can guarantee in Section~\ref{sec-cen4} that the centralizer of an operator of order~$4$ in the first Weyl algebra $A_{1} ({\bf C})$ is the ring of a~{\it plane} algebraic curve in ${\bf C}^2$ (this, given that all centralizers are affine rings of irreducible, though not necessarily reduced, curves, amounts to saying that there is a~plane model of the curve which only misses one
smooth point at infinity, cf.~\cite{Mu1}, or equivalently, that the centralizer can be generated by two elements).

\looseness=-1 Additionally, given a differential operator $M$ that commutes with $L$, we have the sequence of inclusions ${\bf C}[L]\subseteq {\bf C}[L,M] \subseteq \cC_{\cD}(L)$ and all of them could be strict. In this paper we are interested in testing, again for polynomial coefficients, whether a differential operator $B$ exists such that $\cC_{\cD}(L)$ equals ${\bf C}[L,B]$. In such case we call $L$, $B$ a~``Burchnall--Chaundy (BC) pair'' and~{$\cC_{\cD}(L)$ will be the free ${\bf C}[L]$-module with basis $\{1,B\}$}, as a~consequence of Goodearl's theory~\cite{Good}, cf.\ Section~\ref{sec-Dixmier}. Given an operator $M$ in the centralizer of $L$, we give a~procedure to decide if $M$ belongs to ${\bf C}[L]$, that is ${\bf C}[L]={\bf C}[L,M]$; this \textit{``triviality test''} can be performed by means of the differential resultant, see Section~\ref{sec-contruction}. Next, to the question whether $L$, $M$ is a~BC pair, we give an answer for operators $L=L_4$ of order $4$ in $A_1({\bf C})$. Moreover we design an algorithm, ``\texttt{BC pair}'' in Section~\ref{sec-contruction}, that given a commuting pair~$L_4$,~$M$ returns a~BC pair~$L_4$,~$B$. Our algorithm relies on a construction given in Section~\ref{sec-contruction} and its accuracy is guaranteed by Theorem~\ref{thm-al-2}. By means of iterated Euclidean divisions it produces a~system of equations whose solution allows reconstruction of a good partner $B$ such that $L$, $B$ is the desired BC pair. Explicit examples of the performance of this construction are given in Section~\ref{sec-contruction}.

Another issue is that of ``true'' vs.\ ``fake'' rank; this will be defined in more detail, with examples, in Section~\ref{sec-TrueRk}. Here we briefly say that a pair $L$, $M$ of commuting operators whose orders are both divisible by $r$, is called a ``true rank~$r$ pair'' if $r$ is the rank of the algebra~$ {\bf C}[L,M]$. We prove in Theorem~\ref{prop-BCtrue} that BC pairs are true-rank pairs. Of course, not every true-rank pair is a BC pair and, in the process of searching for new true-rank pairs, by means of Gr\" unbaum's approach~\cite{Grun}, one obtains families of examples, see Example~\ref{ex4}. One of our goals is to give true rank~$r$ pairs and important contributions were made by Grinevich~\cite{Gri}, Mokhov~\cite{Mo82,Mo90,Mo1,Mo2}, Mironov~\cite{Mi1}, Davletshina and Shamaev~\cite{DS}, Davletshina and Mironov~\cite{DM}, Mironov and Zheglov~\cite{Mi2,MZ2014}, Oganesyan~\cite{Og2016, Og2017, Og2018}, Pogorelov and Zheglov~\cite{PZ}. To check our results we constructed new true rank~$2$ pairs, by means of non self-adjoint operators of order~$4$ with genus~$2$ spectral curves, see Examples~\ref{ex1} and~\ref{ex3}.

Lastly, for commuting pairs $L$, $M$, it is easy to observe the existence of a~polynomial $h(\lambda,\mu)$ with constant coefficients such that,
identically in the independent variable, $h(L,M)=0$: Burchnall and Chaundy showed that the opposite is also true~\cite{BC, BC2}. This is the defining polynomial of a plane curve, commonly known as spectral curve $\Gamma$, and it can be computed by means of the differential resultant of $L-\lambda$ and $M-\mu$. Furthermore for a true rank~$r$ pair we have
\begin{gather*}
\dres (L-\lambda,M-\mu)=h(\lambda,\mu)^r,
\end{gather*}
see for instance \cite{Prev, W}. By means of the subresultant theorem~\cite{Ch}, we prove in Section~\ref{sec-gcd}, Theorem~\ref{thm-gcd}:
Given a true rank~$r$ pair~$L$, $M$, the greatest common (right) divisor for $L-\lambda_0$ and $M-\mu_0$ at any point $P_0=(\lambda_0,\mu_0)$ of $\Gamma$
is equal to the $r$th differential subresultant $\cL_r(L-\lambda_0,\allowbreak M-\mu_0)$,
and is a differential operator of order~$r$. In this manner we obtain an
{\it explicit} presentation of the right factor of order~$r$ of $L-\lambda_0$
and $M-\mu_0$ that can be {\it effectively computed}. Hence an {\it explicit}
description of the fiber $\cF_{P_0}$ of the rank $r$ spectral sheaf $\cF$ in
the terminology of~\cite{BZ,PW}, where the operators are given in
the ring of differential operators with coefficients in the formal power
series ring ${\bf C}[[x]]$. The factorization of ordinary differential operators using differential subresultants, for non self-adjoint operators, is an important contribution of this work.

Explicit computations for true rank~$2$ self-adjoint and non self-adjoint operators in the
first Weyl algebra $A_1({\bf C})$ are shown in Sections~\ref{sec-gcd} and~\ref{sec-contruction}. We
use these examples to show the performance of our effective
results. Although at this stage we have only implemented our project for rank
two, this is the first step in which complete explicit results were available
(cf.~\cite{Grun}), but we believe that our computational approach to the set
of issues we described has the potential to streamline the theory and
be extended to any rank. We note, without attempting at complete references,
that in rank three Gr\"unbaum's work was extended by Latham
(cf., e.g.,~\cite{LP}) and Mokhov~\cite{Mo1, Mo2} (independently); as for the Weyl algebra, cf.\ the references we gave above.
Computations were carried with Maple~18, in particular using the package OreTools.

\section{Preliminaries}\label{sec-Preliminares}

We are primarily interested in the ring of differential operators ${\mathcal D}$, but it is useful to view it as a subring of the ring of formal pseudodifferential operators $\Psi$, namely the set
\begin{gather*}
\Psi =\left\{
\sum_{j=-\infty}^N u_j(x)\partial^j,\ u_j\ \text{\rm analytic in
some connected neighborhood of}\ x=0\right\}.
\end{gather*}
If we think of these symbols as acting on functions of $x$ by multiplication and differentiation: $(u (x)\partial )f(x)=u\frac{{\rm d}}{{\rm d}x}f$, and formally integrate by parts: $\int (uf^\prime)=u f-\int (u^\prime f)$, we can motivate the composition rules
\begin{gather*}
\partial u =u\partial +u^\prime,\\
\partial^{-1}u =u\partial^{-1}-u^\prime \partial^{-2}+u^{''}\partial^{-3}-\cdots
\end{gather*}
and easily check an extended Leibnitz rule for $A,B\in\Psi$:
\begin{gather*}A\circ B=\sum_{i=0}^\infty \frac{1}{i!}\tilde\partial^i A\ast\partial^i B,\end{gather*}
where $\tilde\partial$ is a partial differentiation w.r.t.\ the symbol
$\partial$ and $\ast$ has the effect of bringing all functions to the
left and powers of $\partial$ to the right. Observe that the first Weyl algebra $A_1 ({\bf C})$ is a subring of the ring of differential operators ${\bf C}(x)[\partial]$ with $\partial=\partial=\partial/\partial x$ and $[\partial,x]=1$. Hence a subring of~$\Psi$.

The differential ring $\Psi$ contains the differential subring ${\mathcal D}$ of differential operators $A=\sum\limits_0^N u_j\partial^j$ and we denote by $(\ )_{+}$ the projection $B_{+}=\sum\limits_0^N u_j\partial^j$ where $B=\sum\limits_{-\infty}^N u_j\partial^j$.

We also see that if $L$ has order $n>0$ and its leading coefficient is regular, i.e., $u_n(0)\not= 0$, then $L$ can be brought to standard form
\begin{gather*}
 L=\partial^n +u_{n-2}(x)\partial^{n-2}+u_{n-3}(x)\partial^{n-3}+\cdots +u_0(x)
\end{gather*}
by using change of variable and conjugation by a function, which are the only two automorphisms of ${\mathcal D}$; we shall always assume $L$ to be in standard form, i.e., $u_1 (x)=0$. We note that in~\cite{BZ}, for completeness, the authors recall a(n essentially formal) proof of the facts we mentioned, to bring $L$ into standard form.

\begin{Remark} The coefficients $u_j(x)$ in the definition of $\Psi$ are often required to be analytic functions near $x=0$, because the algebro-geometric constructions preserve this restriction; typically, statements of differential algebra hold formally, and in particular, our results are mostly concerned with polynomial coefficients, therefore we do not aim at complete generality. Analytic/formal cases of the ring $\Psi$ are treated in~\cite{SS}, with emphasis on certain types of modules over $\Psi$.
 \end{Remark}

\subsection{Centralizers for ODOs}

Unless otherwise specified, we will work with a differential field $(K,\partial)$, with field of constants the field of complex numbers ${\bf C}$, and the ring of differential operators $\cD=K[\partial]$. Given a differential operator $L$ in $\cD$ in standard form, we denote its centralizer in $\cD$ as
\[\cC_{\mathcal D}(L)=\{M\in \cD\,|\, [L,M]=0\}.\]

We recall the reason why centralizers are maximal-commutative subalgebras of~$\mathcal{D}$. We cite two lemmas~\cite{Ve}, the first being straightforward to check; the second is proved in~\cite{Ve} by a~beautiful Lie-derivative argument.

\begin{Lemma}[\cite{Ve}] If $A=a_n\partial^n+a_{n-1}\partial^{n-1}+\cdots +a_0$, $B=b_m\partial^m+b_{m-1}\partial^{m-1}+\cdots +b_0\in {\mathcal D}$ are such that $n>0$ and $\ord [A,B]<n+m-1$, then $\exists\,\alpha\in {\bf C}$ s.t.~$b_m^n=\alpha a_n^m$. Moreover, if $a_n$ and $b_m$ are constant and $\ord[A,B]<n+m-2$, then $\exists\,\alpha,\beta\in {\bf C}$ s.t.~$b_{m-1}=\alpha a_{n-1}+\beta$.
\end{Lemma}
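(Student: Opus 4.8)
The plan is to prove the Lemma by direct computation of the leading coefficients of the commutator $[A,B]$, reading off the vanishing conditions imposed by the order hypotheses. The key tool is the Leibniz rule for the composition of differential operators, which gives
\begin{gather*}
A\circ B=\sum_{k\geq 0}\frac{1}{k!}\bigl(\tilde\partial^k A\bigr)\ast\bigl(\partial^k B\bigr),
\end{gather*}
so that modulo lower-order terms one has $A\circ B=\sum_{i,j}a_i b_j\partial^{i+j}+\bigl(\sum_{i,j}i\,a_i b_j'\partial^{i+j-1}\bigr)+\cdots$, where the second sum collects the first-derivative contributions. First I would write out the top two homogeneous pieces of both $A\circ B$ and $B\circ A$. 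The degree $n+m$ term cancels automatically in the commutator, since both products contribute $a_n b_m\partial^{n+m}$; this is why the commutator always has order at most $n+m-1$ and explains the normalization of the hypotheses.

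Next I would extract the coefficient of $\partial^{n+m-1}$ in $[A,B]$. Collecting the contributions, the leading coefficient of $A\circ B$ at level $n+m-1$ is $a_n b_{m-1}+a_{n-1}b_m+n\,a_n b_m'$, while for $B\circ A$ it is $b_m a_{n-1}+b_{m-1}a_n+m\,b_m a_n'$. Subtracting, the purely algebraic terms $a_n b_{m-1}+a_{n-1}b_m$ cancel against $b_m a_{n-1}+b_{m-1}a_n$, and the coefficient of $\partial^{n+m-1}$ in $[A,B]$ reduces to $n\,a_n b_m'-m\,b_m a_n'$. The hypothesis $\ord[A,B]<n+m-1$ forces this to vanish, giving the differential equation $n\,a_n b_m'=m\,b_m a_n'$, i.e.\ $\bigl(b_m^n/a_n^m\bigr)'=0$ (after clearing by $a_n^{m+1}b_m$, using that $a_n\neq 0$ since $n>0$ and the leading coefficient is regular). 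Since the field of constants is $\mathbf{C}$, this yields $b_m^n=\alpha\, a_n^m$ for some $\alpha\in\mathbf{C}$, proving the first assertion.

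For the second assertion I would assume $a_n,b_m$ are constant, so the first-derivative terms involving $a_n',b_m'$ drop out entirely, and then push the computation one level further down to the coefficient of $\partial^{n+m-2}$. With $a_n,b_m$ constant the relevant contributions at this level come from the genuinely algebraic cross terms $a_n b_{m-2}+a_{n-1}b_{m-1}+a_{n-2}b_m$ (which are symmetric and cancel in the commutator) together with the first-derivative terms $n\,a_n b_{m-1}'+(n-1)a_{n-1}b_m'$ from $A\circ B$ and their analogues from $B\circ A$; since $b_m'=a_n'=0$, the surviving commutator coefficient collapses to $a_n b_m\bigl(n\,b_{m-1}'-m\,a_{n-1}'\bigr)$ up to the sign conventions, perhaps adjusted by a second-derivative $\binom{n}{2}$-type term that I would track carefully. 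The hypothesis $\ord[A,B]<n+m-2$ forces this to vanish, giving $n\,b_{m-1}'=m\,a_{n-1}'$ as an identity in $K$; integrating once (again using that constants are exactly $\mathbf{C}$) produces $b_{m-1}=\alpha\, a_{n-1}+\beta$ for suitable $\alpha,\beta\in\mathbf{C}$.

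I expect the main obstacle to be bookkeeping rather than conceptual: correctly assembling the coefficient of $\partial^{n+m-2}$ requires keeping the second-order Leibniz term $\tfrac{1}{2}\tilde\partial^2 A\ast\partial^2 B$ and verifying that, once $a_n'=b_m'=0$, its contribution either cancels in the commutator or combines cleanly into the single first-order relation in $a_{n-1},b_{m-1}$. The constancy hypothesis is precisely what kills the dangerous cross terms, so the delicate point is to confirm that no residual $a_{n-1}',b_{m-1}'$-independent obstruction survives; once that is checked, the final integration step is immediate from the assumption on the constants.
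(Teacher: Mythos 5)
Your direct computation of the coefficients of $\partial^{n+m-1}$ and $\partial^{n+m-2}$ in $[A,B]$ is correct and is exactly the ``straightforward check'' the paper intends: the paper gives no proof of this lemma, merely citing Verdier and declaring it straightforward to verify. One cosmetic slip: the surviving coefficient at level $n+m-2$ is $n\,a_n b_{m-1}'-m\,b_m a_{n-1}'$ rather than $a_n b_m\bigl(n\,b_{m-1}'-m\,a_{n-1}'\bigr)$, but since $a_n$, $b_m$ are nonzero constants this only changes the value of $\alpha$ (namely $\alpha=m b_m/(n a_n)$), not the existence claim, so your argument stands.
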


\begin{Lemma}[\cite{Ve}]If ${\mathcal A}$ is a commutative subalgebra of ${\mathcal D}$ and $M\in {\mathcal D}$, $\exists\, p\in {\bf Z}\cup\{ -\infty\}$ s.t.~$\forall\, L\in {\mathcal A}$, $\ord L>0$, $\ord [M,L]=p+ \ord L$.
\end{Lemma}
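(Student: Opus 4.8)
The plan is to fix $M$ and study the function $s(L):=\ord[M,L]-\ord L$ on the set of elements $L\in\mathcal A$ of positive order (with the convention $s(L):=-\infty$ when $[M,L]=0$); the statement is exactly the assertion that $s$ is constant, equal to the desired $p$. Two structural facts about $\mathcal D=K[\partial]$ will be used repeatedly: it is a domain, so orders add under composition and the leading coefficient of a product is the product of the leading coefficients; and the commutator strictly lowers order, $\ord[M,L]\le \ord M+\ord L-1$, so that $s(L)\le \ord M-1$ is bounded above on all of $\mathcal A$.

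First I would record two auxiliary facts. (i) \emph{Slopes are invariant under powers}: $s(L^k)=s(L)$ for every $k\ge 1$. This follows from the derivation identity $[M,L^k]=\sum_{i=0}^{k-1}L^{i}[M,L]L^{k-1-i}$; each summand has the same order $(k-1)\ord L+\ord[M,L]$ and, because $K$ is commutative, the same leading coefficient, so in characteristic zero the $k$ copies cannot cancel and $\ord[M,L^k]=(k-1)\ord L+\ord[M,L]$. (ii) \emph{Leading terms of commuting elements match after raising to powers}: given $L_1,L_2\in\mathcal A$ of orders $n_1,n_2>0$ and leading coefficients $\ell_1,\ell_2$, the first cited Lemma applied to the commuting (hence $\ord[L_1,L_2]=-\infty$) pair yields $\alpha\in\mathbf C$, $\alpha\ne 0$, with $\ell_2^{\,n_1}=\alpha\,\ell_1^{\,n_2}$. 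Consequently $Q:=L_2^{\,n_1}-\alpha L_1^{\,n_2}$ lies in $\mathcal A$ and has order strictly below $N:=n_1n_2$.

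The core of the argument is then an extremal-element maneuver. Since slopes are integers bounded above by $\ord M-1$, the supremum $p:=\sup\{s(L):L\in\mathcal A,\ \ord L>0\}$ is either $-\infty$ (when every element of $\mathcal A$ commutes with $M$, in which case the statement holds trivially) or a finite value attained by some $L_1$. Assuming the latter, suppose for contradiction that some positive-order $L_2\in\mathcal A$ had $s(L_2)<p$. Forming $Q$ as in (ii) and using (i), the term $\alpha[M,L_1^{\,n_2}]$ has order $p+N$ while $[M,L_2^{\,n_1}]$ has the strictly smaller order $s(L_2)+N$; hence $\ord[M,Q]=p+N$, and since $\ord Q<N$ we obtain $s(Q)=\ord[M,Q]-\ord Q>p$. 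Finally $Q$ has positive order: it is non-constant because $[M,Q]\ne 0$, and any order-zero element of $\mathcal A$ commutes with $L_1$ and is therefore forced to be a constant by inspecting the top-order term of the commutator. Thus $Q$ is a positive-order element with $s(Q)>p$, contradicting the choice of $p$. Therefore $s(L)=p$ for every positive-order $L$, which is the claim.

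The main obstacle is precisely this globalization: the product rule only gives information one element at a time (it yields $s(L_1L_2)=\max(s(L_1),s(L_2))$ when the two slopes differ), and a naive induction does not close. The device that makes everything work is to combine the characteristic-zero power-invariance (i) with the leading-coefficient relation (ii) coming from the first Lemma, so that cancellation of the top terms of $L_2^{\,n_1}$ and $\alpha L_1^{\,n_2}$ lets the dominant slope $p$ be strictly exceeded unless it is already attained everywhere; one must only take care that the manufactured element $Q$ remains of positive order, so that it is a legitimate competitor for the supremum.
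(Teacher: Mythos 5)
Your proof is correct, and it takes a genuinely different route from the paper: the paper does not prove this lemma at all, but cites Verdier and describes his proof as ``a beautiful Lie-derivative argument'', i.e., one carried out at the level of principal symbols and the induced Lie/Poisson structure. Your argument is instead elementary and self-contained modulo the paper's first cited lemma, which you invoke exactly where it is essential: for commuting $L_1,L_2$ of orders $n_1,n_2>0$ it gives $\ell_2^{n_1}=\alpha\,\ell_1^{n_2}$ with $\alpha\in\mathbf{C}$, $\alpha\neq 0$, so that $Q=L_2^{n_1}-\alpha L_1^{n_2}\in\mathcal{A}$ has order strictly below $n_1n_2$. The supporting steps all check out: in $[M,L^k]=\sum_{i=0}^{k-1}L^i[M,L]L^{k-1-i}$ every summand has the same order and the same leading coefficient $c\,\ell^{k-1}$, and $k\,c\,\ell^{k-1}\neq 0$ in characteristic zero, giving $s(L^k)=s(L)$; the bound $s\le \ord M-1$ makes the supremum $p$ a maximum; in the contradiction step $[M,Q]$ has order exactly $p+n_1n_2$ because its two pieces have different orders; and your disposal of the degenerate cases is complete, since $[M,Q]\neq 0$ rules out $Q$ constant, while an order-zero $q\in\mathcal{A}$ satisfies $0=[L_1,q]$, whose coefficient of $\partial^{\,n_1-1}$ is $n_1a_{n_1}q'$, forcing $q\in\mathbf{C}$. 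As for what each approach buys: yours stays entirely inside the order filtration of $K[\partial]$, works verbatim over any differential field of characteristic zero with constants $\mathbf{C}$, and makes explicit how the lemma follows from the leading-coefficient relation that the paper states just before it; Verdier's symbol/Lie-derivative argument is more conceptual, identifying $p$ intrinsically as the ``order'' of the derivation $\mathrm{ad}_M$ on $\mathcal{A}$ (geometrically, a pole order of a vector field on the spectral side), which is the viewpoint his paper goes on to exploit, but it requires the graded/symbolic machinery that your proof avoids.
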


\begin{Corollary}\label{cor-maxcom} If $\ord L>0$ and $A,B\in {\mathcal D}$ both commute with $L$, then $[A,B]=0$; in particular, ${\mathcal C}_{\mathcal D}(L)$ is commutative, hence every maximal-commutative subalgebra of~${\mathcal D}$ is a centralizer.
\end{Corollary}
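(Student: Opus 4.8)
The plan is to read $[A,B]=0$ off the second of the two lemmas above (the Lie-derivative one), after dealing with a degenerate case by hand. First I would dispose of operators of non-positive order. If $A\in K$ (so $\ord A\le 0$), then commuting with $L$ is very restrictive: writing $A=a_0$ and using that $L$ is in standard form, the coefficient of $\partial^{\,\ord L-1}$ in $[a_0,L]$ is a nonzero multiple of $a_0'$ (the lower-order terms of $L$ cannot reach that degree), so $[a_0,L]=0$ together with $\ord L>0$ forces $a_0'=0$, hence $a_0\in{\bf C}$ because ${\bf C}$ is the field of constants. Such an $a_0$ is central and commutes with $B$ automatically. The same applies to $B$, so from now on I may assume $\ord A>0$ and $\ord B>0$.

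For the main case I would introduce the auxiliary algebra $\mathcal{A}={\bf C}[L,A]$. Since $[L,A]=0$, any two polynomials in $L$ and $A$ commute, so $\mathcal{A}$ is a commutative subalgebra of $\cD$ containing both $L$ and $A$. Applying the second lemma to this $\mathcal{A}$ and to $M=B$ produces an integer (or $-\infty$) $p$ with $\ord[B,L']=p+\ord L'$ for every $L'\in\mathcal{A}$ of positive order. Evaluating at $L'=L$ and using $[B,L]=0$ gives $-\infty=\ord[B,L]=p+\ord L$; as $\ord L$ is finite, this forces $p=-\infty$. Consequently $\ord[B,L']=-\infty$, i.e.\ $[B,L']=0$, for every positive-order $L'\in\mathcal{A}$. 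Taking $L'=A$, which has positive order by the reduction, yields $[A,B]=0$.

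The two ``in particular'' assertions then follow formally. Any $A,B\in\cC_{\cD}(L)$ both commute with $L$ and $\ord L>0$, so the first part gives $[A,B]=0$; hence $\cC_{\cD}(L)$ is commutative. Next, let $\mathcal{M}$ be a maximal-commutative subalgebra containing an operator $L$ of positive order (the relevant case, since we always normalise such an $L$). Every element of $\mathcal{M}$ commutes with $L$, so $\mathcal{M}\subseteq\cC_{\cD}(L)$; as $\cC_{\cD}(L)$ is itself commutative, maximality of $\mathcal{M}$ forces $\mathcal{M}=\cC_{\cD}(L)$, so $\mathcal{M}$ is a centralizer.

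I do not expect a genuine obstacle here; the only points requiring care are the choice of $\mathcal{A}={\bf C}[L,A]$ rather than ${\bf C}[L]$ — the latter would not contain $A$ and so would never let the lemma reach the pair $(A,B)$ — and the finiteness step that upgrades the single relation $[B,L]=0$ to $p=-\infty$ and hence to the vanishing of $[B,L']$ for all positive-order $L'$. That upgrade is exactly the mechanism the second lemma is built to provide, which is why the corollary is essentially immediate once the lemma is in place.
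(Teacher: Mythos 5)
Your proof is correct and follows exactly the route the paper intends: the corollary is stated as an immediate consequence of the second (Lie-derivative) lemma of Verdier, and your choice of the commutative algebra $\mathcal{A}={\bf C}[L,A]$ together with the observation that $[B,L]=0$ forces $p=-\infty$ is precisely the standard derivation, with the order-zero case handled correctly as a side remark. Nothing is missing; the paper simply omits this argument, which you have reconstructed faithfully.
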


\begin{Remark} The analog of Corollary~\ref{cor-maxcom} is not true for operators on finite-dimensional spaces (it is easy to find two noncommuting matrices that commute with a third one).
\end{Remark}

In $\Psi$ any (normalized) $L$ has a unique $n$th root, $n=\ord L$, of the form
\begin{gather*}
 {\mathcal L}=\partial +u_{-1}(x)\partial^{-1}+u_{-2}(x)\partial^{-2}+\cdots .
\end{gather*}

The next result can be shown by using the fact that $\Psi$ is a graded ring.
\begin{Theorem}[I.~Schur, \cite{Sch}]
\[
{\mathcal C}_{\mathcal D}(L)=\left\{ \sum_{-\infty}^N c_j{\mathcal L}^j,\,c_j\in {\bf C}\right\}\cap {\mathcal D}.
\]
\end{Theorem}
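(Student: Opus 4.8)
The plan is to prove the two inclusions separately, working throughout inside the ambient ring $\Psi$ and exploiting its filtration by order. The inclusion $\supseteq$ is immediate: each power $\cL^j$ commutes with $\cL$, hence with $L=\cL^n$, so every ${\bf C}$-linear combination $\sum_j c_j\cL^j$ commutes with $L$; if such a combination also lies in $\cD$, it belongs to $\cC_{\cD}(L)$. The content is therefore in the reverse inclusion $\subseteq$.

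For $\subseteq$ I would first record an expansion lemma. Since $\cL=\partial+u_{-1}\partial^{-1}+\cdots$ is monic, its powers satisfy $\ord(\cL^j-\partial^j)<j$, so by matching leading coefficients one can write any $P\in\Psi$ of order $N$ uniquely as a descending series $P=\sum_{j\le N}c_j\cL^j$ with coefficients $c_j\in K$. This is exactly where the graded structure enters: passing to the associated graded identifies each order-$N$ symbol with its leading coefficient, and $\{\cL^j\}_{j}$ is the $K$-basis of $\Psi$ adapted to the filtration.

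Now take $M\in\cC_{\cD}(L)\subseteq\cC_\Psi(L)$ and expand $M=\sum_{j\le N}c_j\cL^j$. Because $L=\cL^n$ commutes with every $\cL^j$, the vanishing commutator becomes
\[
0=[L,M]=\sum_{j\le N}[L,c_j]\,\cL^j.
\]
The key computation, from the Leibniz rule, is that for a function $c_j\in K$ the commutator $[L,c_j]$ has order $n-1$ with leading coefficient $n\,c_j'$ (the lower-order terms of $L$ contribute only in order $<n-1$); hence $[L,c_j]\cL^j$ has order $n-1+j$ with leading coefficient $n\,c_j'$. Reading off the top order $n-1+N$ of the vanishing sum, where only the $j=N$ summand can contribute, forces $c_N'=0$, i.e.\ $c_N\in{\bf C}$; the summand $[L,c_N]\cL^N$ then vanishes, and the same argument applies to the new leading index $N-1$, and so on. A downward induction on the leading index yields $c_j'=0$ for all $j$, and since each fixed $j$ is reached after finitely many steps this also disposes of the semi-infinite tail. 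Thus $M=\sum_j c_j\cL^j$ with all $c_j\in{\bf C}$, proving $\subseteq$; combining the two inclusions gives the claimed equality.

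The main obstacle is this reverse inclusion, and within it two points deserve care. First, the unique descending $\cL$-expansion must be set up correctly, as it is the mechanism that converts ``commuting with $L$'' into constraints on scalar coefficients. Second, one must verify that there is genuinely no cancellation at the top order, so that the induction is correctly seeded at the leading index and then propagates down through the infinite tail; the leading-coefficient identity $[L,c_j]=n\,c_j'\partial^{n-1}+\cdots$ is routine but is what makes the whole argument run.
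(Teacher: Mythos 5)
Your proof is correct and is precisely the standard argument the paper alludes to when it says the result ``can be shown by using the fact that $\Psi$ is a graded ring'': you expand $M$ as a descending series $\sum_{j\le N}c_j\mathcal{L}^j$, use $[L,\mathcal{L}^j]=0$ to reduce $[L,M]=0$ to $\sum_j[L,c_j]\mathcal{L}^j=0$, and compare leading coefficients ($[L,c_j]=n\,c_j'\partial^{n-1}+\cdots$) in a downward induction to force every $c_j$ to be constant. Since the paper itself gives no further detail, your write-up supplies exactly the intended proof.
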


\begin{Remark}\label{locpar} Schur's theorem shows that the quotient field of ${\mathcal C}_{\mathcal D}(L)$ is a function field of one variable; indeed, a $B$ which commutes with $L$ must satisfy an algebraic equation $f(L,B)=0$ (identically in $x$), by a~dimension count as sketched in \cite{Mum2}, moreover the degree of $f$ in $B$ is bounded; Burchnall and Chaundy show the existence of $f(L,B)$ by using the dimension of the vector space of common eigenfunctions of $L-\lambda_0$ and $B-\mu_0$ for a pair $(\lambda_0,\mu_0)$ such that $f(\lambda_0, \mu_0 )=0$. We will use this idea to give the equation of the curve algorithmically. Schur's point of view has the advantage that $\mathcal{L}$ can be viewed as the inverse of an (analytic) local parameter~$z$ at the point at infinity of the curve defined by $f(\lambda ,\mu )=0$ on the affine $(\lambda , \mu )$-plane. Think of an eigenfunction $\psi$ of $\mathcal{L}=\partial$ as ${\rm e}^{kx}$; the differential operators in $\mathcal{C}_\Psi (\mathcal{L})$ act on $\psi$ as polynomials in~$k$, and correspond to the affine ring of the spectral curve. The non-trivial case is achieved by conjugating with the ``Sato opertor'', $S^{-1}\partial S=\mathcal{L}$; this equation can be solved formally for any normalized $\mathcal{L}$.
\end{Remark}

\subsection{True rank}\label{sec-TrueRk}

The {\it rank} of a subset of ${\mathcal D}$ is the greatest common divisor of the orders of all the elements of that subset. However, we are mainly interested in the rank of the subalgebra generated by the subset. In particular, given commuting differential operators~$L$ and~$M$, let us denote by $\rk(L,M)$ the rank of the pair, which we will compare with the rank $\rk({\bf C}[L,M])$ of the algebra ${\bf C}[L,M]$ they generate.

A polynomial with constant coefficients satisfied by a commuting pair of differential operators is called a {\it Burchnall--Chaundy $($BC$)$ polynomial}, since the first result of this sort appeared is the $1923$ paper~\cite{BC} by Burchnall and Chaundy. In fact, they showed that the converse is also true, namely if two (non-constant) operators satisfy identically a~polynomial in two indeterminates~$\lambda$,~$\mu$ that belongs to $\mathbf{C}[\lambda ,\mu ]$, then they commute.

Let us assume that $n=\ord(L)$ and $m=\ord(M)$. The idea is that by commutativity $M$ acts on $V_\lambda$, the $n$-dimensional vector space of solutions $y(x)$ of $Ly=\lambda y$ ($L$ is regular); $f(\lambda,\mu )$ is the characteristic polynomial of this operator; to see that $f(L,M)\equiv 0$ it is enough to remark that $f(\lambda,\mu )=0$ iff~$L$,~$M$ have a~``common eigenfunction'':
\begin{gather*}
Ly=\lambda y,\\
By=\mu y,
\end{gather*}
hence $f(L,M)$ would have an infinite-dimensional
kernel (eigenfunctions belonging to distinct eigenvalues
$\lambda_1,\ldots,\lambda_k$ are independent by a Vandermonde
argument).

What brings out the algebraic structure of the problem,
and of the polynomial $f$, is the construction of the Sylvester matrix $S_0(L,M)$. This is the coefficient matrix of the extended system of differential operators
\begin{gather}\label{eq-extended system}
\Xi_0(L,M)=\big\{\partial^{m-1}L,\ldots, \partial L, L, \partial^{n-1}M,\ldots ,\partial M, M\big\}.
\end{gather}
Observe that $S_0(L,M)$ is a squared matrix of size $n+m$ and entries in~$K$. We define the {\it differential resultant} of $L$ and $M$ to be
$\dres(L,M):=\det (S_0(L,M))$. For a recent review on differential resultants see~\cite{McW}.
It is well known that
\begin{gather}\label{eq-res}
f(\lambda,\mu)=\dres(L-\lambda,M-\mu)
\end{gather}
is a polynomial with constant coefficients satisfied by the operators $L$ and $M$, see~\cite{Prev,W}. Moreover the plane algebraic curve $\Gamma$ in~${\bf C}^2$ defined by $f(\lambda,\mu)=0$ is known as the {\it spectral curve}~\cite{BC}.

\begin{Remark}Since the algebra ${\bf C}[L,B]$ has no zero-divisors, it can be viewed as the affine ring ${\bf C}[\lambda,\mu]/(h)$ of a plane curve, with $h(\lambda,\mu)$ an irreducible polynomial. The BC curve $=\{ (\lambda,\mu )\, |\, L, \, B\, \hbox{\rm have a joint eigenfunction}\, Ly=\lambda y,\, By=\mu y\}$ is included in the curve $\operatorname{Spec} {\bf C}[L,B]$ and since the latter is irreducible, they must coincide; this shows in particular that the BC polynomial is some power of an irreducible polynomial $h\colon f(\lambda,\mu )=h^{r_1}$, see Theorem~\ref{thm-wilson}.
\end{Remark}

\begin{Remark} It is clear from the form of the matrix of the extended system \eqref{eq-extended system} associated to $L-\lambda$ and $M-\mu$ that its term of highest weight is of the form $(-\lambda )^m+(-1)^{mn}\mu^n$. Let us define the semigroup of weights
\begin{gather*}
\cW=\{ an+bm\, |\, a,\, b\ \hbox{\rm nonnegative integers}\} .
\end{gather*}
In the coprime case $\gcd (n,m )=1$ (thus rank~1), by analyzing the
general solution $(a+cm)n+(b-cn)m$, it is easy to prove the following
useful statements~\cite{BC2}: (i)~every number in the closed interval
$[(m-1)(n-1),mn-1]$ belongs to $\cW$ and exactly half the numbers in the
closed interval $[1,(m-1)(n-1)]$ do not; (ii)~in this range, a
solution $(a,b)$ to $an+bm=k$ is unique.

To explain the significance of the weight, we compactify the BC curve following~\cite{Mum2} to $X= \operatorname{Proj} R$, where $R$ is the graded ring
\begin{gather*}R=\oplus_{s=0}^\infty A_s ,\qquad \text{with} \qquad A_s=\{ A\in {\mathcal A}\, |\, \ord A\leq s\}
\end{gather*}
and the operator 1 is represented by an element $e\in A_1$ (in our case the
commutative algebra~${\mathcal A}$ is ${\bf C}[L,B]$, but the construction holds
for any commutative subalgebra
of $\mathcal{D}$ that contains an
element of any sufficiently large order \cite[Remark~6.3]{SW}). That the point $P_\infty$ which we added is smooth can be seen as follows: the affine open $e\not= 0$ is $\operatorname{Spec}\big(R\big[ \frac{1}{e}\big]_0\big)= \operatorname{Spec} {\mathcal A}$ (the subscript~0 signifies the degree zero component); the affine open where $L\not= 0$ is $\operatorname{Spec}\big(R\big[\frac{1}{L}\big]_0\big)$ and the completion of this ring in the $e$-adic topology is~${\bf C}[[z]]$ if $z$ corresponds to $L^iB^j/L^k$ with $in+jm=kn-1$ (basically we are using ${\mathcal L}^{-1}$ as a local parameter, with ${\mathcal L}=L^{1/n}$). Thus, the weight is the valuation at $P_\infty$ of a function in ${\mathcal A}$, $\cW$ is the Weierstrass semigroup and the number of gaps $g=\frac{(m-1)(n-1)}{2}$ is the genus of~$X$ if there are no finite singular points.
\end{Remark}

\begin{Lemma}[\cite{BC}] If $[L,B]=0$ then there exists a~polynomial in two variables $f(\lambda,\mu )\in {\bf C}[\lambda,\mu ]$ such that $f(L,B)\equiv 0$, if we assign ``weight'' $na+mb$ to a~monomial $\lambda^a\mu^b$ where $n=\ord L$, $m=\ord B$, $\gcd(n,m)=1$, then the terms of highest weight in $f$ are $\alpha\lambda^m+\beta\mu^n$ for
some constants~$\alpha$,~$\beta$.
\end{Lemma}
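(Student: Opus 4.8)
The plan is to recover the classical Burchnall--Chaundy argument sketched in Remark~\ref{locpar} and then read off the top-weight part from Schur's normal form. For existence, I would use that, $L$ being regular of order $n$, the solution space $V_\lambda:=\ker(L-\lambda)$ is $n$-dimensional, and that $[L,B]=0$ forces $B(V_\lambda)\subseteq V_\lambda$. Choosing a basis of $V_\lambda$ (functions of $x$), the matrix of the restriction $B|_{V_\lambda}$ has entries that are \emph{constant} in $x$~-- this is the crux of the existence half~-- and depend algebraically on $\lambda$; I set $f(\lambda,\mu):=\det\big(\mu\,\mathrm{Id}-B|_{V_\lambda}\big)$, monic of degree $n$ in $\mu$. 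After checking that its coefficients lie in ${\bf C}[\lambda]$, I would prove $f(L,B)=0$ as follows: on each $V_\lambda$ the operator $L$ acts by the scalar $\lambda$, so $f(L,B)|_{V_\lambda}=f(\lambda,B)|_{V_\lambda}=0$ by the Cayley--Hamilton theorem; since eigenfunctions attached to distinct values of $\lambda$ are linearly independent (a Vandermonde argument), $f(L,B)$ would have infinite-dimensional kernel unless it is the zero operator.

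For the weight statement I would pass to Schur's Theorem, writing $L=\cL^n$ and $B=\sum_{j\le m}c_j\cL^j$ with $c_j\in{\bf C}$ and $c_m\neq0$. In the Baker--Akhiezer picture of Remark~\ref{locpar}, the common formal eigenfunction $\psi_k$ with $\cL\psi_k=k\psi_k$ satisfies $L\psi_k=k^n\psi_k$ and $B\psi_k=P(k)\psi_k$, where $P(k)=\sum_{j\le m}c_jk^j$. Fixing an $n$-th root $k$ of $\lambda$ and letting $\zeta={\rm e}^{2\pi{\rm i}/n}$, the functions $\psi_{\zeta^jk}$ ($j=0,\dots,n-1$) form a basis of $V_\lambda$ generically, so the eigenvalues of $B|_{V_\lambda}$ are exactly the $P(\zeta^jk)$. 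Hence
\[
f(\lambda,\mu)=\prod_{j=0}^{n-1}\big(\mu-P(\zeta^jk)\big),\qquad \lambda=k^n,
\]
an expression invariant under $k\mapsto\zeta k$ and therefore an honest polynomial in $\lambda$ and $\mu$.

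Finally I would extract the highest-weight part. Assigning weight $n$ to $\lambda$ and $m$ to $\mu$ (equivalently, grading by the order of the pole in $k^{-1}$, since $L=k^n$ and $B\sim c_mk^m$), each factor $\mu-P(\zeta^jk)$ has top-weight part $\mu-c_m\zeta^{jm}k^m$ of weight $m$, so the top-weight part of the product is $\prod_{j=0}^{n-1}\big(\mu-c_m\zeta^{jm}k^m\big)=\mu^n-c_m^nk^{nm}=\mu^n-c_m^n\lambda^m$, of weight $nm$, all remaining monomials having strictly smaller weight. Because $\gcd(n,m)=1$, the equation $na+mb=nm$ has only the nonnegative solutions $(a,b)\in\{(m,0),(0,n)\}$, so the weight-$nm$ part of $f$ is forced to be $\alpha\lambda^m+\beta\mu^n$; comparing gives $\beta=1$ and $\alpha=-c_m^n$, both nonzero. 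The main obstacle I anticipate is the existence half, specifically proving that the coefficients of the characteristic polynomial are genuinely polynomial in $\lambda$ with constant coefficients: the $x$-independence is immediate once one notes the matrix of $B|_{V_\lambda}$ is constant, but controlling the dependence on $\lambda$ (single-valuedness of the symmetric functions of the $P(\zeta^jk)$ and their polynomial growth, together with the genericity needed for the $\psi_{\zeta^jk}$ to form a basis) is where the careful work lies.
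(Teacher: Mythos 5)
Your proposal is essentially the classical Burchnall--Chaundy argument, and its existence half (the matrix of $B|_{V_\lambda}$ has constant entries depending algebraically on $\lambda$; Cayley--Hamilton on each $V_\lambda$; the Vandermonde argument forcing the infinite-dimensional kernel, hence $f(L,B)\equiv 0$) coincides with the sketch the paper gives immediately before the lemma. Where you genuinely diverge is the weight statement: you expand $B=\sum_{j\le m}c_j\cL^j$ via Schur's theorem and read the top-weight part off the parametrization $\lambda=k^n$, $\mu\sim c_mk^m$ at the point at infinity, whereas the paper takes $f(\lambda,\mu)=\dres(L-\lambda,B-\mu)$, the determinant of the Sylvester matrix of the extended system \eqref{eq-extended system}, in which $-\lambda$ occurs exactly once in each of the $m$ rows built from $L-\lambda$ and $-\mu$ exactly once in each of the $n$ rows built from $B-\mu$; from that shape the paper reads off that $f$ is a polynomial in $(\lambda,\mu)$ whose term of highest weight is $(-\lambda)^m+(-1)^{mn}\mu^n$, quoting \cite{Prev,W} for the constancy in $x$ of its coefficients. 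The trade-off between the two routes is exactly the obstacle you flag at the end: in your picture $P(k)=\sum_{j\le m}c_jk^j$ has infinitely many negative-order terms and the Sato conjugation is only formal, so the $\psi_{\zeta^jk}$ are not honest elements of $V_\lambda$ and the elementary symmetric functions of the $P(\zeta^jk)$ are a priori only formal Laurent series in $\lambda^{-1}$; promoting them to polynomials, and your product formula to an identity with the characteristic polynomial, requires a large-$\lambda$ asymptotic (WKB-type) matching, which is real work and is precisely what the paper's determinantal route sidesteps, since there polynomiality and the top-weight term are visible by inspection with no asymptotics. What your route buys in exchange is the geometry that the resultant hides: the weight appears as the valuation at the point $P_\infty$ of the compactified spectral curve (the Weierstrass-semigroup picture of the paper's subsequent remark), and you get the top coefficients explicitly, $\beta=1$ and $\alpha=-c_m^n$, in terms of the leading Schur coefficient; the paper's route, being purely algebraic and effective, is the one its later computations are built on.
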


The first result of this sort appeared is the $1928$ paper~\cite{BC} by Burchnall and Chaundy. More general rings were later studied in \cite{Good,K77, Richter} in the case of Ore extensions.

There are some potentially misleading features of the rank of the algebra ${\bf C}[L,M]$, but the next result settles the issue. Obviously
\begin{gather*}
 \rk(L,M) \geq \rk ( {\bf C}[L,M] ) .
\end{gather*}

\begin{Theorem}[{\cite[Appendix for a rigorous proof]{W}}]\label{thm-wilson}
Let $K$ be the field of fractions of the ring~${\bf C}[[x]]$ or ${\bf C}\{x \} $. Given $L$, $M$ commuting differential operators in~$K[\partial]$. Let $r$ be the rank of the algebra ${\bf C}[L,M]$, $f$ the BC polynomial of $L$ and $M$ in~\eqref{eq-res} and $\Gamma$ their spectral curve. The following statements hold:
\begin{enumerate}\itemsep=0pt
 \item[$(1)$] $f=h^r$, where $h$ is the unique $($up to a constant multiple$)$ irreducible polynomial satisfied by $L$ and $M$;

 \item[$(2)$] $r=\gcd\{\ord(Q)\,|\, Q\in {\bf C}[L,M]\}$;

 \item[$(3)$] $r=\dim (V(\lambda_0,\mu_0))$ where $V(\lambda_0,\mu_0)$ is the space of common solutions of $Ly=\lambda_0 y$ and $My=\mu_0 y$, for any non-singular $(\lambda_0,\mu_0)$ in $\Gamma$.
\end{enumerate}
\end{Theorem}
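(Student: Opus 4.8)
The plan is to read all three statements off the geometry of the parametrization of $\Gamma$ by the eigenvalue parameter of $L$, after first reinterpreting the resultant $f$ as a characteristic polynomial. For a generic value $\lambda_0$ the operator $L-\lambda_0$ is regular, so the solution space $V_{\lambda_0}=\{y\,|\,Ly=\lambda_0 y\}$ has dimension $n=\ord(L)$, and because $M$ commutes with $L$ it preserves $V_{\lambda_0}$. First I would verify that, up to a nonzero constant, $f(\lambda_0,\mu)=\dres(L-\lambda_0,M-\mu)$ is the characteristic polynomial $\det(\mu\,\mathrm{Id}-M|_{V_{\lambda_0}})$: the resultant is $\det S_0(L-\lambda_0,M-\mu)$, and the Sylvester matrix of the extended system $\Xi_0$ is exactly the matrix of the action of $M-\mu$ in the jet coordinates of $V_{\lambda_0}$. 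In particular $\deg_\mu f=n$, the roots of $f(\lambda_0,\cdot)$ are the eigenvalues of $M|_{V_{\lambda_0}}$, and $(\lambda_0,\mu_0)\in\Gamma$ precisely when $V(\lambda_0,\mu_0)=V_{\lambda_0}\cap\ker(M-\mu_0)\neq 0$, i.e., when $L-\lambda_0$ and $M-\mu_0$ have a common eigenfunction.

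Next I would dress $L$ by the Sato operator $S$ with $S^{-1}\cL S=\partial$ recalled in Remark~\ref{locpar}. Conjugation $P\mapsto S^{-1}PS$ carries each $P=\sum_j c_j\cL^j$ in $\cC_{\cD}(L)$ to its constant-coefficient symbol $\widehat P(\partial)=\sum_j c_j\partial^j$, and the formal Baker function $\psi=S\,{\rm e}^{kx}$ satisfies $P\psi=\widehat P(k)\psi$; thus $L\psi=k^n\psi$ and $M\psi=\widehat M(k)\psi$. Hence $\Gamma$ is the image of the morphism $k\mapsto(k^n,\widehat M(k))$, its coordinate ring is $A={\bf C}[k^n,\widehat M(k)]\subseteq{\bf C}[k]$, and $P\mapsto\widehat P$ is an isomorphism ${\bf C}[L,M]\cong A$ carrying $\ord(P)$ to $\deg_k\widehat P$. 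The one-dimensional lemma I would isolate here is that for any subring $A\subseteq{\bf C}[k]$ the set $\{\deg_k a\,|\,a\in A\}$ is a sub-semigroup of ${\bf Z}_{\ge 0}$ whose greatest common divisor equals the degree $[{\bf C}(k):{\bf C}(A)]$ of the parametrization, equivalently the number of points in a generic fiber of $k\mapsto(k^n,\widehat M(k))$. Transporting this back through the isomorphism gives statement (2) with $r=\gcd\{\ord(Q)\,|\,Q\in{\bf C}[L,M]\}$, and identifies $r$ with the generic number of preimages $k$, hence with the generic number of common eigenfunctions.

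Statement (3) then follows by a counting argument. Over a generic $\lambda_0$ the $n$ roots $k$ of $k^n=\lambda_0$ fall into fibers of size $r$, producing $n/r$ distinct values $\mu_i=\widehat M(k)$; the $r$ Baker functions over each $\mu_i$ are independent (distinct exponentials ${\rm e}^{kx}$), so $\dim V(\lambda_0,\mu_i)\ge r$, while $\sum_i\dim V(\lambda_0,\mu_i)\le\dim V_{\lambda_0}=n=(n/r)\cdot r$ forces $\dim V(\lambda_0,\mu_i)=r$; the spectral sheaf $\cF$ being locally free of rank $r$ at the smooth points of $\Gamma$ then extends this to every nonsingular point. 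For statement (1) I would use that ${\bf C}[L,M]\subseteq\cD$ is a domain of transcendence degree one over ${\bf C}$, so $\Gamma=\operatorname{Spec}{\bf C}[L,M]$ is integral and $f=c\,h^{r_1}$ for an irreducible $h$, a constant $c$ and an integer $r_1\ge 1$. Since a generic vertical line $\lambda=\lambda_0$ meets the curve $\{h=0\}$ in its $n/r$ distinct smooth points $\mu_i$ transversally, $\deg_\mu h=n/r$, and comparing with $\deg_\mu f=n$ yields $r_1=n/(n/r)=r$, which is (1).

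The main obstacle is the one-dimensional lemma identifying the greatest common divisor of the degree semigroup of $A$ with the cover degree $[{\bf C}(k):{\bf C}(A)]$: this is what makes the purely arithmetic datum $\gcd\{\ord Q\}$ coincide with the geometric fiber count, and it must be proved carefully (the semigroup of orders detects the cover degree because, for $N$ large, it contains all multiples of $r$, forcing a function behaving like $k^{r}$ at infinity to lie in ${\bf C}(A)$). Secondary technical points that the rigorous argument in~\cite{W} must handle are the existence of the Sato dressing and Baker function over $K$, the passage from the generic fiber to all nonsingular points via local freeness of the spectral sheaf, and the semicontinuity needed to guarantee that $\dim V(\lambda_0,\mu_0)$ does not jump at the smooth points of $\Gamma$.
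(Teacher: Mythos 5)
The paper offers no proof of Theorem~\ref{thm-wilson} at all: it is imported from Wilson \cite{W}, whose appendix contains the rigorous argument. Your attempt therefore stands or falls on its own, and it falls at its central step. You assert that Sato conjugation turns every element of $\cC_{\cD}(L)$ into a constant-coefficient operator with \emph{polynomial} symbol, i.e., $\widehat M(k)\in{\bf C}[k]$, so that $\Gamma$ is the image of the polynomial map $k\mapsto\big(k^n,\widehat M(k)\big)$ and ${\bf C}[L,M]\cong A\subseteq{\bf C}[k]$. But Schur's theorem, as recalled in the paper, only gives $M=\sum_{j=-\infty}^{m}c_j\cL^{j}$: infinitely many negative powers of $\cL$ occur in general, so $\widehat M(k)$ lies in ${\bf C}\big(\big(k^{-1}\big)\big)$, not in ${\bf C}[k]$, and $k\mapsto\big(k^n,\widehat M(k)\big)$ is merely a formal parametrization of a formal punctured neighbourhood of the point at infinity --- which is exactly the content of Remark~\ref{locpar}. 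Your premise would force every spectral curve to admit a polynomial, hence rational, parametrization. Dixmier's pair \eqref{eq-Dixintro}, with polynomial coefficients and squarely within the scope of the theorem, refutes this: $B^2=L^3-\alpha$ forces $\widehat B(k)=\big(k^{12}-\alpha\big)^{1/2}=k^{6}-\frac{\alpha}{2}k^{-6}-\cdots$, a genuine Laurent series, and the spectral curve $\mu^2=\lambda^3-\alpha$ is elliptic, not rational.

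With the polynomial picture gone, the three pillars of your argument collapse simultaneously. The ``one-dimensional lemma'' (gcd of degrees equals $[{\bf C}(k):{\bf C}(A)]$) is applied to a ring $A$ that does not sit inside ${\bf C}[k]$; the fiber count behind statements $(2)$ and $(3)$ --- ``the $n$ roots of $k^n=\lambda_0$ fall into fibers of size $r$'' --- requires evaluating the formal series $\widehat M$ at those roots, which is meaningless over $K=\operatorname{Frac}({\bf C}[[x]])$ and unjustified even in the analytic case, since the Sato operator $S$ is only a formal object; and the degree computation $\deg_\mu h=n/r$ used for statement $(1)$ inherits the same defect. What survives is only a valuation-theoretic shadow: $\ord(P)$ is the pole order of $\widehat P$ at $k=\infty$, so the orders of elements of ${\bf C}[L,M]$ form a numerical semigroup, but its gcd measures a local index at the point at infinity, not a global covering degree, and identifying the two is essentially the theorem itself. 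Likewise, the facts you defer to ``secondary technical points'' --- that $\dim V(\lambda_0,\mu_0)$ equals the algebraic multiplicity of $\mu_0$ as a root of $f(\lambda_0,\cdot)$, i.e., that $M$ acts semisimply on $V_{\lambda_0}$ for generic $\lambda_0$, and that this dimension does not jump at nonsingular points --- are not technicalities but the heart of the matter, and they are precisely what Wilson's appendix is devoted to proving; your proposal invokes them where they are needed rather than supplying them.
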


Observe that whenever $f$ is an irreducible polynomial then $r=1$ and otherwise the tracing index of the curve $\Gamma$ is $r>1$. Furthermore, $r$ can be computed by means of~\eqref{eq-res} and Theorem~\ref{thm-wilson}(1). It may happen that $\rk(L,M)> \rk ( {\bf C}[L,M] )$.

\begin{Definition}\label{def-truerank}Let $(K, \partial )$ be a differential field, and commuting differential operators $L$, $M$ with coefficients in~$K$. If $r=\rk(L,M)=\rk ( {\bf C}[L,M] )$, we call $L$, $M$ a {\it true rank $r$ pair} otherwise a {\it fake rank~$r$ pair}.
\end{Definition}

The first example of a true rank~$2$ pair was given by Dixmier in \cite[Proposition~5.5]{Dix}. Other families of true rank pairs were provided in \cite{Mo1, Mo2}. In~\cite{Mi1}, Mironov gave a family of operators of order~$4$ and arbitrary genus, proving the existence of their true rank~2 pairs.

We define the {\it true rank of a commutative algebra} as the rank of the maximal commutative algebra that it is contained in.

\begin{Proposition}If a commutative subalgebra of the Weyl algebra has prime rank, then it is a true-rank algebra.
\end{Proposition}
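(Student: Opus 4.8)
The plan is to turn the assertion into a divisibility statement and then eliminate the one value that primality leaves open. Put $p=\rk(\mathcal A)$ and pick $L\in\mathcal A$ with $\ord L>0$, which exists because $p>0$. As $\mathcal A$ is commutative, $\mathcal A\subseteq\cC_{\cD}(L)$, and by Corollary~\ref{cor-maxcom} the centralizer $\cC_{\cD}(L)$ is itself commutative; moreover, for positive-order $L,L'\in\mathcal A$ one has $L'\in\cC_{\cD}(L)$, whence $\cC_{\cD}(L)\subseteq\cC_{\cD}(L')$ and, symmetrically, equality. Thus $\cC:=\cC_{\cD}(L)$ is the unique maximal-commutative subalgebra containing $\mathcal A$, and $\rk(\cC)$ is by definition the true rank of $\mathcal A$. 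Since $\mathcal A\subseteq\cC$, every order of an element of $\mathcal A$ is also the order of an element of $\cC$, so the greatest common divisor taken over $\cC$ divides the one taken over $\mathcal A$; that is, $\rk(\cC)\mid p$. Primality leaves only $\rk(\cC)\in\{1,p\}$, and if $\rk(\cC)=p$ then the rank and the true rank of $\mathcal A$ coincide, so $\mathcal A$ is a true-rank algebra.

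The substance of the proof is therefore the exclusion of $\rk(\cC)=1$. Here I would invoke Schur's theorem to write $\cC=\{\sum_j c_j\cL^j\}\cap\cD$, so that the orders realized in $\cC$ form a numerical semigroup $S$ with $\gcd S=\rk(\cC)$. Were $\gcd S=1$, the semigroup $S$ would contain two consecutive integers, giving operators $P,Q\in\cC$ of coprime orders; by Burchnall and Chaundy's coprimality observation these generate a rank-one algebra, so by Theorem~\ref{thm-wilson}(3) the generic common eigenspace of $P$ and $Q$ is one-dimensional. On the other hand, choosing a commuting pair $L,M\in\mathcal A$ with $\rk(L,M)=p$, Theorem~\ref{thm-wilson}(3) gives $\dim V(\lambda_0,\mu_0)=p$ at a generic point, where $V(\lambda_0,\mu_0)$ is the space of common solutions of $Ly=\lambda_0y$ and $My=\mu_0y$; and every element of $\cC$ preserves $V(\lambda_0,\mu_0)$ because it commutes with $L$ and $M$. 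Restricting $P,Q$ to this $p$-dimensional space and splitting it into their joint eigenlines then realizes the spectral curve of $\cC$ as a degree-$p$ cover of that of $\mathcal A$, while the inclusion $\mathcal A\subseteq\cC$ forces the pole orders at infinity of $\mathcal A$ all to be divisible by $p$.

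The crux --- and the step I expect to be hardest --- is to convert this covering-and-fibre picture into an actual contradiction, since a degree-$p$ cover of spectral curves is not absurd on its own. The argument must use primality decisively: one has to show that a sub-semigroup of the gcd-one semigroup $S$ all of whose elements are divisible by the prime $p$ can arise only from a single generator, so that $\mathcal A$ would collapse to a one-generator ring ${\bf C}[G]$ with $\ord G=p$ --- the degenerate configuration that the true-rank framework must handle separately. Establishing this semigroup dichotomy, rather than the routine bookkeeping with orders and fibre dimensions, is where the real work of the proof lies.
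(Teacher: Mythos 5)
Your first paragraph is sound and is in fact cleaner than the paper's own treatment of the same step: identifying $\cC:=\cC_{\cD}(L)$ as the unique maximal-commutative subalgebra containing $\mathcal{A}$ (via Corollary~\ref{cor-maxcom}) and deducing $\rk(\cC)\mid p$ directly from the inclusion of the sets of orders replaces the paper's geometric argument, which obtains the same divisibility from a degree-$d$ covering of spectral curves ($r=s\cdot d$, with the larger algebra corresponding to a rank-$s$ bundle on the covering curve). Both routes reduce the proposition to excluding $\rk(\cC)=1$.

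But that exclusion is exactly what you do not prove --- you say so yourself --- and this is a genuine gap; moreover, the route you sketch for closing it cannot work. The ``semigroup dichotomy'' you call for is false as a combinatorial statement: inside $\mathbb{N}$ (a gcd-one semigroup) the sub-semigroup generated by $2p$ and $3p$ has every element divisible by $p$ yet is not singly generated, so nothing of this kind forces $\mathcal{A}$ to collapse to $\mathbf{C}[G]$. More fundamentally, no argument built only from orders, semigroups and fibre dimensions can succeed, because such an argument never uses that the coefficients are polynomial, and the statement is false without that hypothesis: $\mathcal{A}=\mathbf{C}\big[\partial^2-2\wp(x)\big]$ has prime rank $2$, but its centralizer is the classical rank-one elliptic algebra, which contains an operator of order $3$, so the true rank of $\mathcal{A}$ is $1$. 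The paper kills the case $\rk(\cC)=1$ with an analytic input, and this is precisely where the Weyl-algebra hypothesis enters: by Krichever's theorem on rational KP solutions \cite{krichever}, a rank-one commutative algebra of ordinary differential operators with rational coefficients has coefficients vanishing as $|x|\to\infty$; a polynomial vanishing at infinity is identically zero, so the rank-one alternative degenerates to the trivial constant-coefficient situation and cannot occur nontrivially. Some such transcendental ingredient --- Krichever's classification of rank one --- is indispensable here, and it is the piece your proposal is missing.
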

\begin{proof}Let $W$ be a commutative subalgebra of rank $r$. A larger commutative subalgebra would have rank $s$
divisor of $r$ because it would correspond to a vector bundle of rank $s$ over a curve~$\Sigma$
that covers the spectral curve $\Gamma$ of~$W$ by a map of degree~$d$, so that
$r=s\cdot d$. In our case $s=1$, and by Krichever's theorem on rational KP solutions~\cite{krichever}
they must vanish as $|x|$ approaches infinity, thus if polynomial they must be zero.
\end{proof}

Note, however, that a true-rank algebra need not be maximal-commutative.

\begin{Remark}In that context, we note two misleading features of the rank and we highlight the fact that the rank is a subtle concept:
\begin{enumerate}\itemsep=0pt
\item If $L$, $B$ are of order 2, 3 and satisfy $B^2=4L^3-g_2L-g_3$, then ${\bf C}[L,L^2+B]$ has rank 1 even though the generators have order~2,~4.
\item Note also that ${\bf C}[L]$ has rank $\ord (L)$, which shows that an algebra of rank~1 cannot be of type ${\bf C}[L]$ except for the trivial (normalized) $L=\partial$.
\item We produce fake-rank commutative subalgebras of the first Weyl algebra. Working with Dixmier's operators $L$ of order 4 and $B$ of order~6 in~\eqref{eq-Dixintro}.

\begin{itemize}\itemsep=0pt
\item We use the new pair $M=B^3$, $N=L^3$ to construct an algebra of fake
rank $6=\gcd(18,12)$.
Since $B^2=L^3+\alpha$, the equation of an elliptic curve~$E$,
$B^6$ equals a~polynomial of degree three in $N$,
$M^2=(N-a)(N-b)(N-c)$, which is again the equation of a~(singular) elliptic curve
$F$. Since ${\bf C}[M,N]\subset{\bf C}[L,B]$, there is a map
$\tau \colon E\rightarrow F$, in fact of degree three so that the direct image of a rank~2
bundle on $E$ has rank six on $F$, as expected for the common solutions of
$N-\lambda$, $M-\mu$. In fact, by the Riemann--Hurwitz
formula $2-2h=d(2-2g)-b$, where $d$ is the degree~(3) and~$b$ the total
ramification, in the elliptic case of $g=1$, $h=0$ and $b$ given by the
singular point and the point at infinity.
Therefore, the true rank of ${\bf C}[M,N]$ must also be 2.

\item For one more example of fake rank, instead we can take the square of the previous equation to obtain $B^4=L^6+2\alpha L^3+\alpha^2$, which gives an elliptic curve, and its algebra ${\bf C}\big[B^4,L^6+2\alpha L^3\big]$, which has rank~6 being the same as ${\bf C}[ B]$.
\end{itemize}

\item The (3,4) curve, cf.~\cite[Section 2 (first paragraph)]{eemop}, provides an elliptic algebra of fake rank 2: by taking $\mu_1, \mu_3, \mu_5, \mu_9=0$ we get an elliptic equation for $y$ and~$x^2$, the functions on the curve that play the role of the two commuting operators $L$ and $B$ of orders~4,~6 respectively. However, this is not a Weyl algebra because the coefficients are more general functions than polynomials.
\end{enumerate}
\end{Remark}

\section{GCD at each point of the spectral curve }\label{sec-gcd}

For a differential field $(K,\partial)$, the ring of differential operators $\mathcal{D}=K[\partial]$ admits Euclidean division. For instance in \cite{W} $K$ is the field of fractions of the ring ${\bf C}[[x]]$ or ${\bf C}\{x \} $. Given~$L$,~$M$ in $\cD$, if $\ord(M)\geq \ord(L)$ then $M=qL+r$ with $\ord(r)<\ord(L)$, $q,r\in K[\partial]$. Let us denote by $\gcd(L,M)$ the greatest common (right) divisor of $L$ and $M$.

The tool we have chosen to compute the greatest common divisor of two differential operators is the differential subresultant sequence, see \cite{Ch,Li}.
We summarize next its definition and main properties.

We introduce next the subresultant sequence for differential operators $L$ and $M$ in $K[\partial]$ of orders $n$ and $m$ respectively.
For $k=0,1,\ldots ,N:=\min\{n,m\}-1$ we define the matrix $S_k(L,M)$ to be the coefficient matrix of the extended system of differential operator
\[\Xi_k(L,M)=\big\{\partial^{m-1-k} L,\ldots, \partial L, L, \partial^{n-1-k}M,\ldots ,\partial M, M\big\}.\]
Observe that $S_k(L,M)$ is a matrix with $n+m-2k$ rows, $n+m-k$ columns and entries in $K$.
For $i=0,\dots ,k$ let $S_k^i(L,M)$ be the squared matrix of size $n+m-2k$ obtained by removing the columns of $S_k(L,M)$ indexed by $\partial^{k},\ldots ,\partial,1$, except for the column indexed by~$\partial^{i}$. Whenever there is no room for confusion we denote~$S_k(L,M)$ and $S_k^i(L,M)$ simply by~$S_k$ and $S_k^i$ respectively.
The {\it subresultant sequence} of~$L$ and~$M$ is the next sequence of differential operators in $K[\partial]$:
\begin{gather}\label{eq-subresseq}
\cL_k(L,M)=\sum_{i=0}^k \det\big(S_k^i\big) \partial^i,\qquad k=0,\ldots ,N.
\end{gather}

Given commuting differential operators $L$ and $M$ with coefficients in $K$. Let us assume that~$L$,~$M$ is a true rank~$r$ pair. The differential subresultant allows closed form expressions of the greatest common factor of order $r$ of $L-\lambda_0$ and $M-\mu_0$ over a non-singular point $(\lambda_0,\mu_0)$ of their spectral curve~$\Gamma$, defined by $f(\lambda , \mu )=0$. From the main properties of differential resultants~\cite{McW}, we know that $f(\lambda_0,\mu_0)=0$ is a condition on the coefficients of the operators $L-\lambda_0$, $M-\mu_0$ that guarantees a right common factor. Then, for any non-singular $(\lambda_0,\mu_0)$ in $\Gamma$, the nontrivial operator (found by the Euclidean algorithm) of highest order for which $M-\mu_0 =T_1G_0$, $L-\lambda_0=T_2G_0$ is $G_0=\gcd(L-\lambda_0,M-\mu_0)$.

The next theorem explains how to compute $G_0$ using differential subresultants when we consider operators in the first Weyl algebra in Section~\ref{sec-A1C}.

\begin{Theorem}\label{thm-gcd} In the previous notations, consider commuting differential operators $L$ and $M$ with coefficients in ${\bf C}(x)$. Assume $L$, $M$ is a true rank~$r$ pair, then for any non-singular $(\lambda_0,\mu_0)$ in $\Gamma$ the greatest common divisor $G_0$ of $L-\lambda_0$ and $M-\mu_0$ is the order $r$ differential operator
\begin{gather}\label{eq-gcd}
 G_0=\gcd(L-\lambda_0,M-\mu_0)=\cL_r(L-\lambda_0,M-\mu_0).
\end{gather}
Furthermore, the subresultants $\cL_n(L-\lambda_0,M-\mu_0)$ are identically zero for $n=0,\ldots ,r-1$.
\end{Theorem}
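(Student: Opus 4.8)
The plan is to decouple the statement into two independent ingredients and then combine them: first, identify the order of $\gcd(L-\lambda_0,M-\mu_0)$ with the dimension of the space of common solutions, which Theorem~\ref{thm-wilson}(3) pins down as $r$; second, invoke the subresultant theorem for Ore operators of \cite{Ch,Li}, which characterises the greatest common right divisor (gcrd) as the first non-vanishing member of the sequence \eqref{eq-subresseq}. Matching these two facts delivers both \eqref{eq-gcd} and the ``furthermore'' clause at once.

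First I would determine $\ord(G_0)$. Write $G_0=\gcd(L-\lambda_0,M-\mu_0)$ for the gcrd in ${\bf C}(x)[\partial]$. Since $K[\partial]$ is a left Euclidean domain, the left ideal $K[\partial](L-\lambda_0)+K[\partial](M-\mu_0)$ is principal, generated by $G_0$, so there is a B\'ezout relation $G_0=U(L-\lambda_0)+V(M-\mu_0)$ with $U,V\in{\bf C}(x)[\partial]$, together with the factorizations $L-\lambda_0=T_2G_0$ and $M-\mu_0=T_1G_0$. These two facts show that the solution space of $G_0y=0$ coincides with $V(\lambda_0,\mu_0)$: if $G_0y=0$ then $(L-\lambda_0)y=T_2(G_0y)=0$ and likewise $(M-\mu_0)y=0$, while conversely any common solution satisfies $G_0y=U(L-\lambda_0)y+V(M-\mu_0)y=0$. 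Because $L-\lambda_0$ is monic, the leading coefficients in $L-\lambda_0=T_2G_0$ multiply to $1$, so $G_0$ has a unit leading coefficient and its solution space has dimension exactly $\ord(G_0)$. For a true rank $r$ pair and a non-singular $(\lambda_0,\mu_0)\in\Gamma$, Theorem~\ref{thm-wilson}(3) gives $\dim V(\lambda_0,\mu_0)=r$, whence $\ord(G_0)=r$.

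With $\ord(G_0)=r$ in hand, I would apply the subresultant theorem directly to the honest operators $L-\lambda_0,M-\mu_0\in{\bf C}(x)[\partial]$; no passage through the generic curve is required, since these retain the fixed orders $n,m$ (their leading coefficients being independent of $\lambda_0,\mu_0$). That theorem asserts that if the gcrd of two operators has order $d$, then $\cL_0=\cdots=\cL_{d-1}=0$, $\cL_d\neq0$, and $\cL_d$ is an associate of the gcrd. Specialising $d=r$ yields simultaneously the vanishing of $\cL_0(L-\lambda_0,M-\mu_0),\ldots,\cL_{r-1}(L-\lambda_0,M-\mu_0)$, which is the ``furthermore'' clause, and the identity \eqref{eq-gcd} up to normalization. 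As a consistency check, the bottom member is $\cL_0=\dres(L-\lambda_0,M-\mu_0)=f(\lambda_0,\mu_0)=h(\lambda_0,\mu_0)^r$, which vanishes precisely because $(\lambda_0,\mu_0)\in\Gamma$, in agreement with \eqref{eq-res} and Theorem~\ref{thm-wilson}(1).

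The step I expect to be most delicate is the normalization asserted by the literal equality in \eqref{eq-gcd}: the subresultant theorem produces $\cL_r$ only as an \emph{associate} of the gcrd, i.e.\ the gcd up to left multiplication by a nonzero element of ${\bf C}(x)$. I would resolve this by taking $\cL_r$ itself as the chosen order-$r$ representative $G_0$, tracking the principal subresultant coefficient $\det(S_r^r)$ as its leading coefficient and verifying, again as part of the subresultant theorem, that $\det(S_r^r)\neq0$ exactly when the gcrd has order $r$. A secondary technical point is the hypothesis under which $\dim V(\lambda_0,\mu_0)=\ord(G_0)$, which needs an ambient differential extension in which an order-$r$ operator with unit leading coefficient has a full $r$-dimensional solution space; I would therefore phrase the solution-space count in the formal (or convergent) local setting of Theorem~\ref{thm-wilson}, into which ${\bf C}(x)$ embeds.
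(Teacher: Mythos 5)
Your proposal is correct and takes essentially the same route as the paper: the paper's own proof simply cites Theorem~\ref{thm-wilson} together with Chardin's subresultant theorem \cite[Theorem~4]{Ch} and concludes. Your B\'ezout argument identifying $\ord(G_0)$ with $\dim V(\lambda_0,\mu_0)$, and your remark on the normalization of $\cL_r$ as a representative of the gcd, merely make explicit steps that the paper leaves implicit in those citations.
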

\begin{proof}Recall that, the $\gcd(L-\lambda_0, M-\mu_0)$ is nontrivial (it is not in ${\bf C}(x)$) if and only if $f(\lambda_0,\mu_0)=\dres(L-\lambda_0, M-\mu_0)=0$, because of \cite{Prev} and \cite[Theorem 4]{Ch}. Furthermore, from Theorem~\ref{thm-wilson} and Theorem~4 from~\cite{Ch}, if the pair $L$, $M$ is true rank~$r$, then the greatest common divisor of $L-\lambda_0$, $M-\mu_0$ can be computed using the $r$th subresultant, for any non-singular point $(\lambda_0,\mu_0)$ in $\Gamma$. Summarizing, we obtain the result.
\end{proof}

By this theorem, we obtain an {\it explicit} presentation of the right factor of order $r$ of $L-\lambda_0$ and $M-\mu_0$ that can be {\it effectively computed}. Hence an {\it explicit} description of the fiber $\cF_{P_0}$ of the rank~$r$ spectral sheave $\cF$ in the terminology of \cite{BZ,PW}, where the operators are given in the ring of differential operators with coefficients in the formal power series ring~${\bf C}[[x]]$.

The next example illustrates the computation of greatest common divisors using differential subrestultants for a pair of true rank~$2$ operators over a spectral curve of genus~$2$.

\begin{Example}\label{ex1}Using a Gr\"unbaum's style approach~\cite{Grun}, we search for operators of order~$4$ in~$A_1({\bf C})$ that commute with a nontrivial operator (not in~${\bf C}[L_4]$) of order~$10$. We fix
\begin{gather}\label{operator-L4general}
 L_4=\big(\partial^2+x^4+1\big)^2+U(x)\partial+W(x),
\end{gather}
where $U(x)=u_3x^3+u_2 x^2+u_1 x +u_0$ and $W(x)=w_2x^2+w_1 x +w_0 $ in ${\bf C}[x]$. Forcing the commutator $[L_4,M_{10}]=0$, for an arbitrary operator $M_{10}$ of order $10$, we obtain that the only nontrivial answers are:
\begin{enumerate}\itemsep=0pt
\item $U(x)=0$ and $W(x)=4x^2+w_0$ or $W(x)=8x^2+w_0$, which are self-adjoint examples given in~\cite{Og2016}, with $g=1$ and $g=2$ respectively.\label{ex-1-1}

\item $U(x)=\pm 4{\rm i}$ and $W(x)=4x^2+w_0$, which is a non self-adjoint case, with $g=1$, see~\cite{Grun,MZ2014}.\label{ex-1-2}

\item $U(x)=\pm 8{\rm i}$ and $W(x)=16x^2+w_0$, which is a non self-adjoint case with $g=2$, as we will prove in Section~\ref{sec-contruction}, Example~\ref{ex3}.\label{ex-1-3}

\item $U(x)=\pm 12{\rm i}$ and $W(x)=12x^2+w_0$, which is a non self-adjoint case, with $g=2$ as we will prove in Section~\ref{sec-contruction}, Example~\ref{ex3}.\label{ex-1-4}
\end{enumerate}

To illustrate the computation of the greatest common divisor using differential subresultants let us consider the differential operator
\begin{gather}\label{eq-nonselfadj1}
 L_{4}= \big( \partial^2 + x^4+1 \big)^2+8{\rm i}\partial+16x^2.
\end{gather}
From a family of operators of order $10$ commuting with $L_4$ we choose
\begin{gather}
B_{10}=\partial^{10}+5\big(x^4+1\big)\partial^8+20\big(4x^3+{\rm i}\big)\partial^7+10\big(x^8+2x^4+64x^2+1\big)\partial^6\nonumber\\
\hphantom{B_{10}=}{} +T_5\partial^5+T_4 \partial^4 +T_3 \partial^3 + T_2 \partial^2 + T_1 \partial + T_0,\label{eq-B10ex1}
\end{gather}
for some $T_i\in {\bf C}[x]$ (not included due to their length). Moreover, the differential resultant $\dres(L_4-\lambda,B_{10}-\mu)=h(\lambda,\mu)^2$ with
\begin{gather}\label{eq-hB10}
h(\lambda,\mu) ={\mu}^{2}+R_5(\lambda)=\mu^2+\lambda\big({-}\lambda^4-56\lambda^2+288\lambda-1296\big).
\end{gather}
Thus, by Theorem \ref{thm-wilson}, $L_4$, $B_{10}$ is a true rank~2 pair that verifies $(B_{10})^2=R_5 (L_{4})$.

By Theorem \ref{thm-gcd}, for any $P_0=(\lambda_0,\mu_0)$ in the spectral curve $\Gamma$ defined by \eqref{eq-hB10}, the greatest common divisor of $L_4-\lambda_0$ and $B_{10}-\mu_0$ is given by the second subresultant $\cL_2(L_4-\lambda_0,B_{10}-\mu_0)$, see \eqref{eq-subresseq}. In fact the subresultants $\cL_n(L_4-\lambda_0,B_{10}-\mu_0)$, $n=0,1$ are zero. For details,
 \begin{gather*}
\cL_0(L_4-\lambda_0,B_{10}-\mu_0)=h(\lambda_0,\mu_0)^2=0,\\
\cL_1(L_4-\lambda_0,B_{10}-\mu_0)=\phi_1+\phi_2 \partial=0
 \end{gather*}
 with
 \begin{gather*}
\phi_2=\det \big(S_1^1\big)=4 {\rm i}\big( 18 {x}^{2}+\lambda_0 \big) h(\lambda_0,\mu_0)=0,\\
\phi_1=\det \big(S_1^0\big)= -\big( 8 \lambda_0 {x}^{2}+72 {x}^{4}+36+{\lambda_{0}}^{2}+72 {\rm i}x \big)h(\lambda_0,\mu_0)=0.
 \end{gather*}
 The greatest common divisor of $L_4-\lambda_0$ and $B_{10}-\mu_0$ equals
 \begin{gather}\label{eq-subresL4B6}
 \cL_2(L_4-\lambda_0,B_{10}-\mu_0)=\det\big(S_2^2 \big)\partial^2+ \det\big(S_2^1\big)\partial+\det\big(S_2^0\big)
 \end{gather}
 with
 \begin{gather*}
 \det\big(S_2^2\big)=576 \lambda_{0} {x}^{6}+192 {\lambda_{0}}^{2}{x}^{4}+16 {\lambda_{0}}^{3}{x}^{2}+{\lambda_{0}}^{4}+56 {\lambda_{0}}^{2}-288 \lambda_{0}+1296,\\
 \det\big(S_2^1\big)= 4 \big( {-}24 \lambda_{0} {x}^{3}-4 {\lambda_{0}}^{2}x+{\rm i}\mu_{0} \big) \big( 18 {x}^{2}+\lambda_{0} \big),\\
 \det\big(S_2^0\big)=1296+5184 {\rm i} x+1296 {x}^{4}+ \big( 56+288 {\rm i}{x}^{5}+80 {\rm i}x+192 {x}^{8}+248 {x}^{4}+288 {x}^{2} \big) {\lambda_{0}}^{2}\\
 \hphantom{\det\big(S_2^0\big)=}{} + \big( {-}288+576 {\rm i}{x}^{7}-864 {\rm i}{x}^{3}-1152 {\rm i}x+576 {x}^{10}+576 {x}^{6}+1440 {x}^{4} \big) \lambda_{0}\\
\hphantom{\det\big(S_2^0\big)=}{} + \big( {-}36-72 {\rm i}x-72 {x}^{4} \big) \mu_{0}
 + \big( {x}^{4}+1 \big){\lambda_{0}}^{4}-8 \lambda_{0} \mu_{0} {x}^{2}-{\lambda_{0}}^{2}\mu_{0}\\
\hphantom{\det\big(S_2^0\big)=}{} + \big( 8 {\rm i}{x}^{3}+16 {x}^{2}+16 {x}^{6} \big) {\lambda_{0}}^{3}.
 \end{gather*}
 Observe that $\cL_2(L_4-\lambda_0,B_{10}-\mu_0)$ is an order $2$ differential operator in $A_1({\bf C})$ and also that the monic greatest common divisor is $\partial^2-\chi_1\partial-\chi_0$ with
 \[\chi_1=-\frac{\det\big(S_2^1\big)}{\det\big(S_2^2\big)} ,\qquad \chi_0=-\frac{\det\big(S_2^0\big)}{\det\big(S_2^2\big)}.\]
 Therefore the fiber $\cF_{P_0}$ at $P_0$ of the rank $r=2$ spectral sheave $\cF$ over the curve $\Gamma$ is the order two operator $\partial^2-\chi_1\partial-\chi_0$ in total agreement with~\cite{BZ}.
\end{Example}

\begin{Remark}We would like to point out that the operators $L_4$ in Cases~\ref{ex-1-3} and~\ref{ex-1-4} of Example~\ref{ex1} are not self-adjoint and their spectral curves have genus $g=2$. We believe they are new examples of rank $2$ fourth order non self-adjoint operators with nontrivial centralizers.

The factorization of ordinary differential operators using differential subresultants, for non self-adjoint operators, is an important contribution of this work. The determinantal formulas obtained by means of~\eqref{eq-gcd} can be {\it effectively} computed. See for instance~\eqref{eq-subresL4B6}, for which we have used Maple 18 to give the final formulas.
\end{Remark}

\section{Centralizers and BC pairs}\label{sec-Dixmier}

In this section, we review a theorem by Goodearl~\cite{Good} on the description of a basis of the centralizer $\cC_{\cD}(L)$ as a free ${\bf C}[L]$-module and give the notion of BC pair.

Given commuting differential operators $L$ and $M$ in $\cD$, we observe that
\[{\bf C}[L,M]\subseteq \cC_{\cD}(L),\]
but they can be different. Since $\cC_{\cD}(L)$ is a maximal subalgebra by Corollary \ref{cor-maxcom}, we wonder when is ${\bf C}[L,M]$ a maximal subalgebra and therefore equal to the centralizer. The next result about the description of the centralizer will allow us to reach some conclusions.

The following theorem was proved in \cite{Good} in as wide a context as reasonable (more general rings of differential operators $\cD$). For instance, the ring $\cC^{\infty}$, of infinitely many times differentiable complex valued functions on the real line, is not a~field but by \cite[Corollary 4.4]{Good}, the centralizer $\cC_{\cC^{\infty}}(P)$, $P=a_n \partial^n+\cdots +a_1\partial+a_0$ is commutative if and only if there is no nonempty open interval on the real line on which the functions $\partial(a_0),a_1,\ldots ,a_n$ all vanish. Details of the evolution of the next result from various previous works are given in \cite{Good}.

\begin{Theorem}[{\cite[Theorem 1.2]{Good}}]\label{thm-good} Let $L$ be an operator of order $n$ in $\cD=K[\partial]$. Let $X$ be the set of those $i$ in $\{0,1,2,\ldots ,n-1\}$ for which $\cC_{\cD}(L)$ contains an operator of order congruent to $i$ module $n$. For each $i\in X$ choose $Q_i$ such that $\ord ( Q_{i} )\equiv i$ $(\mod \, n)$ and $Q_i$ has minimal order for this property $($in particular $0\in X$, and $Q_0=1)$. Then $\cC_{\cD}(L)$ is a free ${\bf C}[L]$-module with basis $\{Q_i\,|\, i\in X\}$. Moreover, the cardinal $t$ of a basis of $\cC_{\cD}(L)$ as a free ${\bf C}[L]$-module is a~divisor of~$n$.
\end{Theorem}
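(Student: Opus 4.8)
The plan is to exploit the key structural fact, recalled earlier via Schur's theorem, that $\cC_{\cD}(L)$ embeds into $\mathbf{C}((\mathcal{L}^{-1}))$, the field of formal Laurent series in $\mathcal{L}^{-1}$ where $\mathcal{L}=L^{1/n}$ is the unique normalized $n$th root of $L$ in $\Psi$. Under this embedding the order of an operator becomes a valuation-type function: $\ord(M)$ records the top power of $\mathcal{L}$ (equivalently of $\partial$) appearing in $M$. The first step I would take is to observe that the set
\[
\Sigma=\{\ord(Q)\,|\,Q\in\cC_{\cD}(L),\ Q\neq 0\}\subseteq\mathbf{Z}_{\geq 0}
\]
is a numerical semigroup: it is closed under addition because $\cC_{\cD}(L)$ is a commutative domain (Corollary~\ref{cor-maxcom}) in which orders add under multiplication, and it contains $0$ (the order of the constant $1$) and $n=\ord(L)$. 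Reducing $\Sigma$ modulo $n$ produces exactly the index set $X\subseteq\{0,1,\ldots,n-1\}$, and $0\in X$ is automatic.

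The second step is to verify that $\{Q_i\,|\,i\in X\}$ is a generating set for $\cC_{\cD}(L)$ as a $\mathbf{C}[L]$-module. Given any nonzero $M\in\cC_{\cD}(L)$, let $d=\ord(M)$ and let $i\equiv d\pmod n$ with $i\in X$. Because $Q_i$ has minimal order in its congruence class, we have $d\geq\ord(Q_i)$ and $d-\ord(Q_i)=kn$ for some integer $k\geq 0$; then $L^{k}Q_i$ lies in $\cC_{\cD}(L)$ and has the same order $d$ as $M$. Since the leading coefficient of $L$ is $1$ (standard form) and multiplication is by a unit in the associated graded sense, I can subtract a suitable constant multiple $c\,L^{k}Q_i$ to strictly lower the order of $M$. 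This is the usual descent argument: iterating and inducting on the order $d$ expresses $M$ as a $\mathbf{C}[L]$-linear combination of the $Q_i$. The same argument shows linear independence over $\mathbf{C}[L]$: a nontrivial relation $\sum_i p_i(L)Q_i=0$ would force two distinct terms to share the same order, impossible since the orders $\ord(L^{k}Q_i)=kn+\ord(Q_i)$ are distinct across distinct residues $i$ and strictly increasing in $k$ within a fixed residue, so the highest-order surviving term cannot cancel.

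The third step is the divisibility claim $t=|X|\mid n$. The cleanest route is to pass to the quotient field $F$ of $\cC_{\cD}(L)$ and the fraction field $E=\mathbf{C}(L)$ of $\mathbf{C}[L]$; the free-module statement just proved shows $[F:E]=t$. I would then identify the order function with (the negative of) the valuation at the point $P_\infty$ at infinity on the spectral curve, as set up in Remark~\ref{locpar}: $\mathcal{L}^{-1}$ is a local parameter there, $L$ is a function with a pole of order $n$ at $P_\infty$, and each element of $\cC_{\cD}(L)$ has a single pole supported at $P_\infty$. Thus $P_\infty$ is totally ramified of ramification index $n$ in the degree-$t$ extension $F/E$, and since the ramification index of a point divides the degree of the extension over which it sits, one gets $t\mid n$. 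I expect this last step to be the main obstacle, because making the valuation-theoretic argument rigorous requires care about whether the relevant ring is integrally closed (the curve may be singular, and the centralizer need not be the full normalization), so that the naive ``ramification index divides degree'' statement applies; the safe alternative is to argue directly with the semigroup $\Sigma$, showing that $X$ together with the reduction-mod-$n$ map makes $\Sigma$ a union of $t$ cosets of $n\mathbf{Z}_{\geq 0}$-translates and that the group generated inside $\mathbf{Z}/n\mathbf{Z}$ by $X$ has order dividing $n$ — indeed $X$ is closed under addition mod $n$ (as $\Sigma$ is a semigroup), hence is a subgroup of $\mathbf{Z}/n\mathbf{Z}$, and its order $t$ therefore divides $n$ by Lagrange's theorem.
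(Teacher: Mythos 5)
The paper does not actually prove this statement: it is quoted from Goodearl \cite[Theorem~1.2]{Good}, so there is no in-paper argument to compare against. Measured against the cited source, your proposal follows essentially the standard route: the orders of nonzero elements of $\cC_{\cD}(L)$ form a numerical semigroup $\Sigma$ (orders add because $\cD$ is a domain); spanning of $\{Q_i\}$ is proved by descent on order; independence holds because the orders $kn+\ord(Q_i)$ are pairwise distinct across all pairs $(i,k)$, so the top term of a relation cannot cancel; and $t\mid n$ because the image of $\Sigma$ in $\mathbf{Z}/n\mathbf{Z}$ is a nonempty finite subsemigroup of a finite group, hence a subgroup, and Lagrange applies. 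Your ``safe alternative'' for the divisibility is exactly the right move; the valuation-theoretic route through $P_\infty$ is indeed delicate for the reasons you name (the spectral curve may be singular and $\cC_{\cD}(L)$ need not be integrally closed), and it is not needed.

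The one step you must tighten is the descent. Subtracting $c\,L^kQ_i$ with $c$ a \emph{constant} only lowers the order of $M$ if the ratio of leading coefficients $\mathrm{lc}(M)/\mathrm{lc}(Q_i)$ lies in $\mathbf{C}$, and your stated justification (``multiplication is by a unit in the associated graded sense'') does not give this: unit-ness would only let you cancel with some $c\in K$, and a $K$-multiple of $L^kQ_i$ need not commute with $L$, which would wreck the $\mathbf{C}[L]$-module argument entirely. What is needed is that every element of $\cC_{\cD}(L)$ has constant leading coefficient once $L$ is normalized. This is available to you in two ways: from your own first paragraph, since Schur's theorem writes $M=\sum_{j\le N}c_j\mathcal{L}^j$ with $c_j\in\mathbf{C}$ and $\mathcal{L}^j=\partial^j+(\hbox{lower order})$, so $\mathrm{lc}(M)=c_N\in\mathbf{C}$; or from the first Verdier lemma quoted in the paper, which with $a_n=1$ gives $b_m^n=\alpha\in\mathbf{C}$, hence $b_m'=0$ and $b_m\in\mathbf{C}$. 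The same fact settles the base case of the induction ($\ord(M)=0$ forces $M\in\mathbf{C}=\mathbf{C}\cdot Q_0$). With that inserted, your argument is complete and matches the cited proof in substance.
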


The cardinal $t$ of a basis of $\cC_{\cD}(L)$ as a free ${\bf C}[L]$-module is known as the rank of the module. We will not use this terminology to avoid confusion with the notion of rank of a set of differential operators that is being analyzed in this paper.

\begin{Remark}If the cardinal of a basis of $\cC_{\cD}(L)$ as a free ${\bf C}[L]$-module is $t=2$ then it is a free ${\bf C}[L]$-module with basis $\{1,B\}$, that is
\[\cC_{\cD}(L)=\{p_0(L)+p_1(L)B\,|\, p_0,p_1\in {\bf C}[L]\}={\bf C}[L,B].\]
\end{Remark}

The question we will try to answer, in some cases, in this paper is: Given a commutative true rank~$r$ pair $L$,~$M$, is $L$,~$M$ a basis of $\cC_{\cD}(L)$ as a free ${\bf C}[L]$-module? In the affirmative case then
\begin{gather*}
 \rk(L,M) = \rk ( {\bf C}[L,M] ) = \rk(\cC_{\cD} (L))=r.
\end{gather*}

\begin{Definition}\label{def-BC} Let $L$ be an irreducible operator in $\cD$. Given a pair $L$,~$M$ of differential operators in $\cD$, with $M\notin {\bf C}[L]$,
we will call $L$, $M$ a {\it Burchnall--Chaundy $($BC$)$ pair} if ${\bf C}[L,M]=\mathcal{C}_{\cD}(L)$.
\end{Definition}

\begin{Theorem}\label{prop-BCtrue}Let $L$,~$M$ be a commutative pair of rank $r\geq 1$ in $\cD$, with $M\notin {\bf C}[L]$. If $L$,~$M$ is a BC pair then $L$, $M$ is a true rank~$r$ pair.
\end{Theorem}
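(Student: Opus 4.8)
Throughout I write $n=\ord L$ and $m=\ord M$, so that the pair rank is $r=\rk(L,M)=\gcd(n,m)$, and I set $r'=\rk(\mathbf{C}[L,M])$. By Definition~\ref{def-truerank}, proving that $L,M$ is a true rank $r$ pair amounts to proving $r'=r$. One half is free of any hypothesis: $r'$ is the greatest common divisor of the orders of \emph{all} elements of $\mathbf{C}[L,M]$, so it divides both $\ord L=n$ and $\ord M=m$, whence $r'\mid\gcd(n,m)=r$. The whole force of the statement is thus the reverse divisibility $r\mid r'$, i.e.\ that $\gcd(n,m)$ divides the order of every operator in $\cC_{\cD}(L)=\mathbf{C}[L,M]$.

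The first step is to read the true rank off Goodearl's basis. Since $L,M$ is a BC pair, $\cC_{\cD}(L)=\mathbf{C}[L,M]$, and Theorem~\ref{thm-good} presents it as a free $\mathbf{C}[L]$-module $\bigoplus_{i\in X}\mathbf{C}[L]\,Q_i$, where $Q_i$ has least order in its class $i\bmod n$ and $t:=|X|$ divides $n$. The orders of the nonzero elements form the set $\bigcup_{i\in X}\big(\ord Q_i+n\mathbf{Z}_{\ge 0}\big)$, which is closed under addition and contains $0$ and $n$, hence is a numerical semigroup whose gcd is $r'=\gcd\big(n,\{\ord Q_i\}_{i\in X}\big)$. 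Because a numerical semigroup of gcd $r'$ contains every sufficiently large multiple of $r'$, the set $X$ is exactly the set of multiples of $r'$ in $\{0,\dots,n-1\}$, so $t=n/r'$. Proving $r=r'$ is therefore equivalent to proving $t=n/\gcd(n,m)$.

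The second step exploits that $\cC_{\cD}(L)=\mathbf{C}[L][M]$ is monogenic over $\mathbf{C}[L]$. Being free of rank $t$ and generated over the principal ideal domain $\mathbf{C}[L]$ by the single element $M$, the operator $M$ satisfies a monic relation of degree $t$ over $\mathbf{C}[L]$ and $\{1,M,\dots,M^{t-1}\}$ is itself a $\mathbf{C}[L]$-basis. When the symbols of these elements are $\mathbf{C}[L]$-independent, their degrees $0,m,2m,\dots,(t-1)m$ run through the $t$ distinct classes of $X$; as $X$ is the cyclic subgroup of $\mathbf{Z}/n\mathbf{Z}$ generated by $r'$, the residue of $m$ must generate it, which forces $\gcd(n,m)=n/t=r'$ and hence $r=r'$. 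This settles the theorem in the principal case.

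The delicate point — and the step I expect to be the main obstacle — is precisely this independence of symbols, which can fail exactly when the top-order term of $M$ already lies in $\mathbf{C}[L]$: then two powers of $M$ fall into the same residue class, their symbols become $\mathbf{C}[L]$-dependent, and $\gcd(n,m)$ may strictly exceed $r'$. To close this gap I would first reduce $M$ modulo $\mathbf{C}[L]$, subtracting the offending power of $L$ without altering $\mathbf{C}[L,M]$, and then invoke the irreducibility of $L$ demanded in Definition~\ref{def-BC} to exclude a genuine generator whose leading term is a power of $L$. Such a generator would, through the closed-form right factor $\cL_{r'}(L-\lambda_0,M-\mu_0)$ of order $r'<n$ furnished by Theorem~\ref{thm-gcd} at a non-singular point of $\Gamma$, exhibit a proper factorization of $L-\lambda_0$ that I would specialize to produce a nontrivial factor of $L$ itself. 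Turning this descent into a clean contradiction with the irreducibility of $L$, so as to guarantee that the residue class of $m$ generates $X$, is the heart of the argument.
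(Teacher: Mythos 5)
Your opening steps are correct, and they are in fact more careful than the corresponding step of the paper's own proof: combining Theorem~\ref{thm-good} with the additive-semigroup structure of the set of orders, you show that $X$ is exactly the set of multiples of $r'=\rk({\bf C}[L,M])$ in $\{0,\dots,n-1\}$, that $t=n/r'$, and that $\{1,M,\dots,M^{t-1}\}$ is a ${\bf C}[L]$-basis of $\cC_{\cD}(L)$; the paper compresses all of this into the bare assertion ``$X=\{0,r\}$''. The problem is where you stop. Your Step~3 yields $r=r'$ only under the hypothesis that $0,m,2m,\dots,(t-1)m$ are pairwise distinct modulo $n$, and that hypothesis is \emph{equivalent} to the conclusion: $jm\equiv j'm\ (\mod\ n)$ for some $0\le j<j'\le t-1$ happens precisely when $n/r\le t-1$, i.e., precisely when $r'<r$. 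So the entire content of the theorem sits in the case you defer to your last paragraph, which is a strategy sketch rather than a proof, as you yourself say (``the heart of the argument''). That is a genuine gap.

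Moreover, the sketched strategy cannot be completed. First, subtracting an element of ${\bf C}[L]$ from $M$ leaves the algebra ${\bf C}[L,M]$, hence the BC-pair property, unchanged, but it changes $m=\ord(M)$ and therefore changes $r=\rk(L,M)=\gcd(n,m)$ --- the very quantity the theorem computes --- so the reduction proves a statement about a different pair. Second, the configuration you hope to exclude by irreducibility of $L$ actually occurs: take Dixmier's pair~\eqref{eq-Dixintro}, with $\ord(L)=4$, $\ord(B)=6$ and $\cC(L)={\bf C}[L,B]$, and set $M=L^2+B$, of order $8$. Then ${\bf C}[L,M]={\bf C}[L,B]=\cC(L)$, so $L$, $M$ is a BC pair with $M\notin{\bf C}[L]$, yet $\rk(L,M)=\gcd(4,8)=4$ while $\rk({\bf C}[L,M])=2$; no descent via Theorem~\ref{thm-gcd} can rule this out. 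This is exactly the point the paper's proof passes over when it ``observes'' that $r<n$ and that $X=\{0,r\}$: those assertions amount to assuming tacitly that $\ord(M)$ is not congruent modulo $n$ to an order already realized in ${\bf C}[L]$, which does hold in the paper's intended application (Corollary~\ref{thm-centralizerL4}, where $M=X_2$ has minimal order $\equiv 2$ modulo $4$). Under that tacit hypothesis your Steps~1--3 do constitute a complete proof, and a cleaner one than the paper's; without it, the gap you flagged cannot be filled, because the statement fails in precisely the case you isolated.
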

\begin{proof}Let $n$, $m$ be $ \ord (L)$ and $\ord (M)$ respectively. Since $L$,~$M$ is a BC pair and a rank $r$ pair, we have $\cC (L) = {\bf C} [L,M]$ and $r=\gcd (\ord (L), \ord (M))$. Next we will proof that the algebra ${\bf C} [L,M] $ is a~rank~$r$ algebra.

Let $s$ be the rank of ${\bf C} [L,M] $. Then $s|r$. There exists $Q\in \cC (L)$ with $s=\gcd (\ord (L), \ord (Q))$. Observe that $s<n$ and $r<n$. But, by Theorem~\ref{thm-good} we have $X=\{0, r\}$, where $X$ is the set of those $i$ in $\{0,1,2,\ldots ,n-1\}$ for which $\cC_{\cD}(L)$ contains an operator of order congruent to $i$ module $n$. Hence $s=r$, and the pair $L$,~$M$ is a true rank~$r$ pair.
\end{proof}

Observe that the converse of Theorem \ref{prop-BCtrue} is not true in general. See examples in Section \ref{sec-contruction}.

\begin{Remark}
The ring $\mathbf{C}[L,B]$ is a priori only a subring of the affine ring of
the spectral curve, as is clear from Remark \ref{locpar}.
This is a crucial problem, around which we built our algorithm
{\tt BC pair}, as stated in the Introduction.
Using the parameter~$k$,
Segal and Wilson give an illustration of what can be viewed as a containment
of commutative subalgebras, and the surjective morphisms between the attendant
spectral curves~\cite[Section~6]{SW}. In particular, if
$\mathcal{C}_\mathcal{D}(L)=\mathbf{C}[L,B]$, the spectral curve is special,
in that it can be embedded in the plane with only one smooth point at
infinity; the noted Klein quartic curve gives a non-example of such a~curve~\cite{KMP}. Of course, in the case of a hyperelliptic curve defined by~$B^2$ equalling a polynomial in~$L$, the ring of the affine curve is indeed
$\mathbf{C}[L,B]$, unless the curve has singular points and
in that case the ring of the desingularization is larger; examples of this can be constructed by transference, but in order to stay in the Weyl algebra, one has to ensure that after conjugation the ring still has polynomial coefficients.
\end{Remark}

\section[Gradings in $A_1 ({\bf C})$ and the Dixmier test]{Gradings in $\boldsymbol{A_1 ({\bf C})}$ and the Dixmier test} \label{subsect-grading}

In the remaining parts of this paper we will consider differential operators in the first Weyl algebra $A_1({\bf C})$. In this section we define an appropriate filtration of $A_1({\bf C})$ to use a lemma by Dixmier~\cite{Dix} that we call the \texttt{Dixmier test}.

Next, we present some well known techniques for grading the first Weyl algebra $A_1 ({\bf C})$, for a field of zero characteristic ${\bf C}$, see for instance \cite{BM,CN}. For non zero $P \in A_1 ({\bf C})$, say $P= \sum_{i , j }a_{i j} x^i \partial^j$, we denote by $\mathcal{N}(P)$ its {\it Newton diagram}
$\mathcal{N}(P)=
 \big\{ (i ,j )\in \mathbb{N}^2 \, | \, a_{i j} \not=0 \big\}$.
Given non negative integers $p$, $q$ such that $p +q >0$, we consider the linear form
\begin{gather*}
 \Lambda_{p,q} (i,j)= p i+qj.
\end{gather*}

\begin{Lemma}[see \cite{CN}]With the previous notation, the function
\begin{gather*}
 \delta \colon \ A_1 ({\bf C}) \rightarrow \mathbb{Z} \cup \{ -\infty \} , \qquad \delta (P ) =\max \{ \Lambda_{p,q} (i,j) \, | \, (i,j)\in \mathcal{N} (P) \}
\end{gather*}
is an admissible order function on $A_1 ({\bf C})$. Moreover, the family of ${\bf C}$-vector spaces
\begin{gather*}
 G_\delta^s = \{ P \in A_1 ({\bf C}) \, | \, \delta (P) \leq s \} , \qquad s\in \mathbb{Z} ,
\end{gather*}
is an increasing exhaustive separated filtration of
$A_1 ({\bf C})$, and it is called the $\delta_{p,q}$-filtration of~$A_1 ({\bf C})$ $($associated to the linear form $\Lambda_{p,q})$.
\end{Lemma}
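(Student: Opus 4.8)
The plan is to verify the two assertions separately: first that $\delta$ satisfies the defining properties of an admissible order function, and then that the sublevel sets $G_\delta^s$ form an increasing, exhaustive, separated filtration compatible with the product. Recall that $\delta$ is an admissible order function when, for $P,Q\in A_1({\bf C})$: $\delta(P)=-\infty$ if and only if $P=0$; $\delta(\lambda P)=\delta(P)$ for $\lambda\in {\bf C}^\times$; the inequality $\delta(P+Q)\leq\max\{\delta(P),\delta(Q)\}$ holds; and the product is additive, $\delta(PQ)=\delta(P)+\delta(Q)$.

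The first three properties follow directly from the definition of $\delta$ as a maximum of the linear form $\Lambda_{p,q}$ over the (finite) Newton diagram. Indeed $\mathcal{N}(P)=\emptyset$ exactly when $P=0$, giving the empty maximum $-\infty$; scaling by a nonzero constant does not change $\mathcal{N}(P)$; and since cancellation can only delete points, $\mathcal{N}(P+Q)\subseteq\mathcal{N}(P)\cup\mathcal{N}(Q)$, whence $\delta(P+Q)\leq\max\{\delta(P),\delta(Q)\}$. Moreover $\Lambda_{p,q}(i,j)=pi+qj\geq 0$ on $\mathbb{N}^2$, so $\delta(P)\geq 0$ for every $P\neq 0$; this non-negativity will be used for separatedness.

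The main work, and the only delicate point, is additivity under products. Here I would first reduce to monomials. Using the commutation $\partial x=x\partial+1$, one obtains for the product of two basis monomials a finite expansion
\begin{gather*}
\big(x^a\partial^b\big)\big(x^c\partial^d\big)=\sum_{k=0}^{\min\{b,c\}} \binom{b}{k}\frac{c!}{(c-k)!}\,x^{a+c-k}\partial^{b+d-k},
\end{gather*}
whose $k=0$ term $x^{a+c}\partial^{b+d}$ has coefficient $1$. The crucial observation is that
\begin{gather*}
\Lambda_{p,q}(a+c-k,\,b+d-k)=\Lambda_{p,q}(a,b)+\Lambda_{p,q}(c,d)-k(p+q),
\end{gather*}
so because $p+q>0$ the maximum over $k$ is attained uniquely at $k=0$. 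Hence the product of two monomials has a single top term $x^{a+c}\partial^{b+d}$, and $\delta$ is additive on monomials. For general $P,Q$ I would pass to the associated graded object: writing $\Ini(P)$ for the sum of the monomials of $P$ realizing $\delta(P)$, the monomial computation shows that the top-degree part of $PQ$ equals the product $\Ini(P)\,\Ini(Q)$ computed in the \emph{commutative} polynomial ring ${\bf C}[x,\partial]$, since the commutator $[\partial,x]=1$ has $\delta$-value $0<p+q=\delta(\partial)+\delta(x)$ and thus disappears in the top degree. As ${\bf C}[x,\partial]$ is an integral domain, $\Ini(P)\,\Ini(Q)\neq 0$ whenever $P,Q\neq 0$, so no cancellation occurs in top degree and $\delta(PQ)=\delta(P)+\delta(Q)$. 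This identification of $\operatorname{gr}_\delta A_1({\bf C})$ with ${\bf C}[x,\partial]$ is exactly the step that upgrades the easy inequality $\delta(PQ)\leq\delta(P)+\delta(Q)$ to an equality, and is where I expect the real content to lie.

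Finally, the filtration properties are immediate consequences. The inclusion $G_\delta^s\subseteq G_\delta^{s+1}$ is clear from the definition, so the family is increasing; every $P$ lies in $G_\delta^{\delta(P)}$ (the Newton diagram being finite forces $\delta(P)<\infty$), so $\bigcup_s G_\delta^s=A_1({\bf C})$ and the filtration is exhaustive; and since $\delta(P)\geq 0$ for $P\neq 0$, only $P=0$ satisfies $\delta(P)\leq s$ for all $s$, giving $\bigcap_s G_\delta^s=\{0\}$ and separatedness. Compatibility with the algebra structure, namely $G_\delta^s+G_\delta^t\subseteq G_\delta^{\max\{s,t\}}$ and $G_\delta^s\cdot G_\delta^t\subseteq G_\delta^{s+t}$, follows from the subadditivity of $\delta$ under sums and from the product formula $\delta(PQ)=\delta(P)+\delta(Q)$ established above, completing the verification.
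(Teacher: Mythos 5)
Your proof is correct, but there is nothing in the paper to compare it against: the lemma is stated with the citation ``see \cite{CN}'' and no proof is given in the text, so your argument supplies what the paper delegates to Castro-Jim\'enez and Narv\'aez-Macarro. What you wrote is the standard (and essentially the cited) argument, and it is sound at every step: the first three order-function axioms are immediate from $\mathcal{N}(P+Q)\subseteq\mathcal{N}(P)\cup\mathcal{N}(Q)$ and finiteness of Newton diagrams; the real content is multiplicativity, which you correctly reduce to the monomial expansion $\big(x^a\partial^b\big)\big(x^c\partial^d\big)=\sum_{k}\binom{b}{k}\frac{c!}{(c-k)!}x^{a+c-k}\partial^{b+d-k}$, the observation that the $k$th correction drops the $\Lambda_{p,q}$-degree by exactly $k(p+q)>0$, and the fact that the resulting top-degree part of $PQ$ is the product of $\Ini(P)$ and $\Ini(Q)$ in the commutative domain ${\bf C}[\chi,\xi]$, hence nonzero. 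This last step is precisely the identification $\operatorname{gr}_\delta(A_1({\bf C}))\cong{\bf C}[\chi,\xi]$ and the symbol multiplicativity $\sigma(LM)=\sigma(L)\sigma(M)$ of~\eqref{eq-symbol} that the paper records immediately after the lemma and then relies on for the \texttt{Dixmier test}, so your proof is not only correct but also exhibits the mechanism the paper actually uses. Two small points worth keeping: your remark that $\delta\geq 0$ on nonzero operators (because $p,q\geq 0$) is exactly what makes the filtration separated, since it forces $G_\delta^s=\{0\}$ for $s<0$; and your argument needs only $p+q>0$, never $p,q>0$, so it covers the degenerate filtrations (e.g., $q=0$) as well, where the initial forms are no longer confined to a single monomial but the domain argument still applies.
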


Let us consider the commutative ring of polynomials ${\bf C}[\chi , \xi ]$ and the ${\bf C}$-algebra isomorphism:
\begin{gather*}
\phi \colon \ {\bf C}[\chi ,\xi] \rightarrow \operatorname{gr}_\delta (A_1 ({\bf C})) , \qquad
\phi(\chi )=\sigma (x) , \qquad \phi(\xi )=\sigma (\partial ),
\end{gather*}
where $\sigma (P)$ is the principal symbol of the operator $P$ with respect to the $\delta_{p,q}$-filtration. Moreover~$\phi$ is an isomorphism of graded rings where the degree function in ${\bf C}[\chi ,\xi]$ is given by the linear form $\Lambda_{p,q}$, that is $\deg \big(\chi^i \xi^j \big)= \Lambda_{p,q} (i,j) = p i+qj$. Moreover
\begin{gather}\label{eq-symbol}
\sigma (LM) =\sigma (L) \sigma (M).
\end{gather}

Let $P$ be an operator with $m=\delta (P)$. We call {\it the initial part of the operator} $P$ the homogeneous operator:
\begin{gather*}
\Ini(P)= \sum_{\Lambda(i , j) =m}a_{i j} x^i \partial^j.
\end{gather*}

\begin{Remark}From now on we identify $\sigma (P)$ and $\phi^{-1} (\sigma (P) )$ for each operator $P$.
\end{Remark}

For the convenience of the reader we recall a result from Dixmier work \cite{Dix} that will be useful in the next sections. The next result is \cite[Lemma 2.7]{Dix}, using the previous terminology. We will call this result the \texttt{Dixmier test}.

\begin{Lemma}[\texttt{Dixmier test}]\label{lema-Dix-2.7}
With the previous notation, let us consider the $\delta_{p,q}$-filtration of~$A_1 ({\bf C})$.
Given $ L $ and $ M $ two non-zero operators in $ A_1 ({\bf C})$, with $ v = \delta (L) $ and $w = \delta (M)$. The following statements hold:
\begin{enumerate}\itemsep=0pt
 \item[$1.$] There is a unique pair $ T$, $U $ of elements of $ A_1 ({\bf C}) $ with the following properties:
 \begin{enumerate}\itemsep=0pt
 \item[$(a)$] $[L, M]= T+U$;
 \item[$(b)$] $\mathcal{N}(T)=\mathcal{N}(\Ini (T))$ and $\delta (T)= v+w-(p+q)$;
 \item[$(c)$] $\delta (U)< v+w-(p+q)$.
 \end{enumerate}
 \item[$2.$] The following conditions are equivalent:
 \begin{enumerate}\itemsep=0pt
 \item[$(a)$] $T=0$;
 \item[$(b)$] $ \frac{\partial \sigma (L)}{\partial \chi } \frac{\partial \sigma (M)}{\partial \xi } - \frac{\partial \sigma (L)}{\partial \xi } \frac{\partial \sigma (M)}{\partial \chi } =0 $;
 \item[$(c)$] $\sigma (M)^v = c \sigma (L)^w$, for some constant $c$.
 \end{enumerate}
 \item[$3.$] If $T\not=0$, then the symbol of $[L, M]$ is
 $\sigma ( [L , M] ) = \frac{\partial \sigma (L)}{\partial \chi } \frac{\partial \sigma (M)}{\partial \xi } - \frac{\partial \sigma (L)}{\partial \xi } \frac{\partial \sigma (M)}{\partial \chi }$.
\end{enumerate}
\end{Lemma}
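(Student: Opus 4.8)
The plan is to work directly in the graded ring $\operatorname{gr}_\delta(A_1(\mathbf{C}))$ via the isomorphism $\phi$, exploiting the fact that the bracket $[L,M]=LM-ML$ behaves like a Poisson bracket on principal symbols. First I would write $L$ and $M$ in terms of their $\delta_{p,q}$-homogeneous decompositions, $L=\Ini(L)+L'$ with $\delta(L')<v$ and similarly $M=\Ini(M)+M'$. The central computation is to determine the top-weight part of the commutator. Since $\sigma(LM)=\sigma(L)\sigma(M)=\sigma(ML)$ by \eqref{eq-symbol}, the naive leading terms of weight $v+w$ cancel in $[L,M]$, so the genuine leading behavior sits at weight $v+w-(p+q)$. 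The point is that the $\delta_{p,q}$-weight of the commutator $[x^i\partial^j,x^k\partial^\ell]$ drops by exactly $p+q$ relative to the product, because the single application of the Leibniz correction $\partial x=x\partial+1$ trades one factor $x\partial$ (weight $p+q$) for a constant (weight $0$).

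To make part~3 precise I would compute, on homogeneous monomials, that the top-weight component of $[L,M]$ is governed by the Poisson-type bracket
\begin{gather*}
\{\sigma(L),\sigma(M)\}=\frac{\partial \sigma(L)}{\partial\chi}\frac{\partial\sigma(M)}{\partial\xi}-\frac{\partial\sigma(L)}{\partial\xi}\frac{\partial\sigma(M)}{\partial\chi}.
\end{gather*}
Concretely, for monomials $x^{i}\partial^{j}$ and $x^{k}\partial^{\ell}$ the leading term of the commutator is $(j k - i\ell)\,x^{i+k-1}\partial^{j+\ell-1}$ plus lower-weight terms, and $jk-i\ell$ is exactly the Jacobian appearing in the bracket above once one passes to $\chi,\xi$. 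Summing over all monomials in $L$ and $M$ and tracking only the weight-$(v+w-(p+q))$ part yields the identity $\sigma([L,M])=\{\sigma(L),\sigma(M)\}$, which is part~3, and identifies $T$ as the operator whose symbol realizes this bracket. This simultaneously establishes existence in part~1: set $T$ to be the weight-$(v+w-(p+q))$ homogeneous component of $[L,M]$, so that $\mathcal{N}(T)=\mathcal{N}(\Ini(T))$ holds by construction, and set $U=[L,M]-T$, which has strictly lower $\delta$. Uniqueness follows because the filtration is separated: any two decompositions satisfying (a)--(c) must agree on the top graded piece and hence everywhere.

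For part~2, the equivalences are then formal once part~3 is in hand. The equivalence $(a)\Leftrightarrow(b)$ is immediate: $T=0$ means precisely that the weight-$(v+w-(p+q))$ component of $[L,M]$ vanishes, which by the symbol formula is the vanishing of the Jacobian bracket in $(b)$. For $(b)\Leftrightarrow(c)$ I would argue that the vanishing of $\frac{\partial\sigma(L)}{\partial\chi}\frac{\partial\sigma(M)}{\partial\xi}-\frac{\partial\sigma(L)}{\partial\xi}\frac{\partial\sigma(M)}{\partial\chi}$ is exactly the condition that the two homogeneous polynomials $\sigma(L)$ and $\sigma(M)$ be functionally dependent; since $\sigma(L)$ is $\Lambda_{p,q}$-homogeneous of degree $v$ and $\sigma(M)$ of degree $w$, functional dependence forces $\sigma(M)^{v}=c\,\sigma(L)^{w}$ for a constant $c$ by comparing the homogeneous factorizations (both are, up to scalars, powers of a common homogeneous polynomial). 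I expect the main obstacle to be the bookkeeping in part~3: verifying rigorously that after summing the Leibniz expansion over all monomials, every contribution at the top weight assembles into the claimed Jacobian and that all cross-terms of weight $v+w$ cancel exactly as predicted by $\sigma(LM)=\sigma(ML)$. The functional-dependence step $(b)\Rightarrow(c)$ also requires care, since one must use homogeneity to upgrade a local (Jacobian) dependence to the global monomial relation.
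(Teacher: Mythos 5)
A preliminary remark: the paper does not prove this statement at all --- it is quoted verbatim as Lemma~2.7 of Dixmier~\cite{Dix}, translated into the paper's terminology. So your proposal can only be measured against Dixmier's original argument, and in substance it reconstructs that argument along the standard lines: expand the commutator monomial by monomial, observe the weight drop of $p+q$, identify the top graded piece of $[L,M]$ with a Jacobian bracket of symbols, and convert the vanishing of that bracket into proportionality of powers of the symbols. Your part~1 is sound: from
\begin{gather*}
\big[x^i\partial^j,x^k\partial^\ell\big]=\sum_{s\geq 1}\left(\binom{j}{s}\frac{k!}{(k-s)!}-\binom{\ell}{s}\frac{i!}{(i-s)!}\right)x^{i+k-s}\partial^{j+\ell-s},
\end{gather*}
each term losing weight $s(p+q)$, bilinearity gives $\delta([L,M])\leq v+w-(p+q)$; taking $T$ to be the (possibly zero) homogeneous component of weight exactly $v+w-(p+q)$ and $U$ the rest gives existence, and uniqueness is just uniqueness of homogeneous components. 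Part~3 then follows because only pairs of monomials coming from $\Ini(L)$ and $\Ini(M)$ can contribute at the top weight, and both the $s=1$ terms and the Jacobian are bilinear.

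Two concrete repairs are needed. First, a sign: your coefficient $jk-i\ell$ is correct for $[L,M]=LM-ML$ with $[\partial,x]=1$, but the displayed bracket $\frac{\partial \sigma (L)}{\partial \chi}\frac{\partial \sigma (M)}{\partial \xi}-\frac{\partial \sigma (L)}{\partial \xi}\frac{\partial \sigma (M)}{\partial \chi}$ evaluates on $\chi^i\xi^j$, $\chi^k\xi^\ell$ to $(i\ell-jk)\chi^{i+k-1}\xi^{j+\ell-1}$, so your assertion that they match ``exactly'' fails; test $L=\partial$, $M=x$, where $\sigma([L,M])=1$ while the displayed bracket gives $-1$. (This reveals a sign discrepancy in the transcription of Dixmier's lemma itself; it is immaterial for part~2, but it must be tracked, not asserted away, in part~3.) Second, and this is the real gap, the implication $(b)\Rightarrow(c)$ is the only nontrivial point of part~2, and your proposal merely gestures at it: the route through the general theorem that a vanishing Jacobian forces algebraic dependence, followed by a factorization claim for quasi-homogeneous polynomials, is both heavier than needed and left unproved (your parenthetical ``both are powers of a common homogeneous polynomial'' is essentially the statement to be established). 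The clean argument uses Euler's identity for $\Lambda_{p,q}$-homogeneous polynomials: $f=\sigma(L)$ and $g=\sigma(M)$ satisfy $p\chi f_\chi+q\xi f_\xi=vf$ and $p\chi g_\chi+q\xi g_\xi=wg$; combining these with $f_\chi g_\xi=f_\xi g_\chi$ yields $vfg_\xi=wgf_\xi$ and $vfg_\chi=wgf_\chi$, whence both partial derivatives of the rational function $g^v/f^w$ vanish identically, so $g^v=cf^w$ for some $c\in{\bf C}$. Without this (or an equivalently completed argument), part~2 of your proposal is a plan rather than a proof.
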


By means of Lemma \ref{lema-Dix-2.7}(2c), we can decide on the divisors of the orders of the operators of the centralizer of a given differential operator~$L$.

\begin{Lemma}\label{lem-filtration} Let $L\not=\partial^n$ be an order $n$ operator in normal form in $A_1 (\mathbf{C})$. There exists a~unique linear form $\Lambda_{p,q} (i,j) =pi+qj$ with $p$, $q$ non negative integers, $p+q>0$, such that $\delta_{p,q}(L) = \Lambda_{p,q}(0,n) = \Lambda_{p,q}(a,b)$ for some $(a,b)\in \cN (L)\setminus \{(0,n)\}$.
 \end{Lemma}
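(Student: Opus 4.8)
The plan is to read the condition $\delta_{p,q}(L)=\Lambda_{p,q}(0,n)=\Lambda_{p,q}(a,b)$ as a statement about the Newton diagram $\cN(L)$: we must exhibit a weight $(p,q)$ in the closed first quadrant (minus the origin) for which the linear form $\Lambda_{p,q}$ attains its maximum over $\cN(L)$ at the vertex $(0,n)$ \emph{and} at a second lattice point of $\cN(L)$. Geometrically this is a supporting line of the convex hull of $\cN(L)$ through $(0,n)$ whose outer normal $(p,q)$ lies in the first quadrant and which passes through at least one further point of $\cN(L)$. First I would record the two structural facts that make this possible: since $L$ is monic of order $n$, the point $(0,n)$ is the \emph{unique} element of $\cN(L)$ with second coordinate $n$, and all other points $(i,j)$ satisfy $j<n$; and since $L\neq\partial^n$, the set $\cN(L)\setminus\{(0,n)\}$ is nonempty.

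Next I would translate the maximality condition into an elementary inequality: $\Lambda_{p,q}(i,j)\le\Lambda_{p,q}(0,n)$ holds for all $(i,j)\in\cN(L)$ if and only if $p\,i\le q\,(n-j)$ for every such point. The construction then splits into two cases according to whether $\cN(L)$ meets the region $i>0$. In the main case there is a point with $i>0$; here I would set $\rho_{\min}=\min\{(n-j)/i : (i,j)\in\cN(L),\ i>0\}$, attained at some $(i^*,j^*)$, and take $(p,q)$ to be the pair of coprime nonnegative integers proportional to $(n-j^*,i^*)$. A short check shows that with this choice $p\,i\le q\,(n-j)$ holds throughout $\cN(L)$ (strictly off the minimizers, with equality at $(i^*,j^*)$), so the maximum is attained at $(0,n)$ and at $(i^*,j^*)$, as required. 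In the degenerate case every point of $\cN(L)$ lies on the axis $i=0$, i.e.\ $L$ has constant coefficients; there I would simply take $(p,q)=(1,0)$, for which $\Lambda_{1,0}\equiv 0$ on $\cN(L)$ and the tie with $(0,n)$ is automatic.

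Finally, the uniqueness of $\Lambda_{p,q}$ is the part requiring the most care, and I expect it to be the main obstacle. The key observations are that a tie can never occur at a point with $i=0$ when $q>0$ (since then $\Lambda_{p,q}(0,b)=qb<qn$ for $b<n$), and that $q=0$ forces the maximizer to sit on $\{i\ \text{maximal}\}$, which excludes $(0,n)$ as soon as some point has $i>0$. In the main case these two remarks force any admissible weight to have $q>0$ and the tie to occur at a point with $i>0$, so that $p/q=(n-b)/a$ for that point; combined with the maximality bound $p/q\le\rho_{\min}$ this pins the ratio to $\rho_{\min}$ and hence determines $(p,q)$ uniquely among coprime pairs. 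In the degenerate case the same remarks force $q=0$ and thus $(p,q)=(1,0)$. The only genuinely delicate points are this exclusion of the boundary ray $q=0$ in the non-constant case and the symmetric reduction to it in the constant-coefficient case; the rest is the routine clearing of denominators to pass from the rational slope $\rho_{\min}$ to coprime integers $(p,q)$.
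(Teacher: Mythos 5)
Your proof is correct and complete, and it is worth noting at the outset that the paper itself states Lemma~\ref{lem-filtration} \emph{without any proof} (it passes directly to the definition of the test-filtration), so there is no argument of the authors to compare yours against; your construction is the natural way to fill that gap. Your existence argument is the standard supporting-line construction for the Newton polygon: minimize the slope $(n-j)/i$ over the points of $\cN(L)$ with $i>0$, and take $(p,q)$ coprime proportional to $(n-j^*,i^*)$; the verification that the maximum of $\Lambda_{p,q}$ is then attained exactly at $(0,n)$ and at the minimizers is routine and you carry it out correctly. Your uniqueness argument is also sound: once $q>0$, no tie can occur at a point with $i=0$ (as $qb<qn$ for $b<n$), and $q=0$ is incompatible with $(0,n)$ being a maximizer as soon as some point has $i>0$; hence in the main case the tie point has $a>0$, forcing $p/q=(n-b)/a\geq\rho_{\min}$, while maximality forces $p/q\leq\rho_{\min}$, pinning the ratio. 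Two remarks. First, you are right to treat the constant-coefficient case separately: an operator such as $\partial^2+1$ is in normal form, differs from $\partial^n$, and admits only the form with $q=0$; this edge case is genuinely inside the hypotheses and easy to overlook. Second, your reading of ``unique'' as ``unique among coprime pairs'' is not merely care but necessity: as literally stated the lemma is false, since if $(p,q)$ satisfies the conditions then so does $(kp,kq)$ for every positive integer $k$, and $\Lambda_{kp,kq}\neq\Lambda_{p,q}$ as linear forms. Uniqueness can only mean uniqueness of the direction $(p:q)$, i.e., of the coprime (or otherwise normalized) representative, which is exactly what your argument establishes; this normalization deserves to be made explicit wherever the test-filtration is invoked, e.g., in Theorem~\ref{thm-order} and Corollary~\ref{corolario-Dix1}.
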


We will call the $\delta_{p,q}$-filtration associated to the linear form defined in Lemma~\ref{lem-filtration}, the {\it test-filtration} for~$L$.

\begin{Corollary}\label{corolario-Dix1} Let $L$ be an order $n$ operator in normal form in $A_1 (\mathbf{C})$. Let us consider the test-filtration for~$L$ in~$A_1 ({\bf C})$.
We will assume that $ \phi^{-1}(\sigma (L)) $ is a power of an irreducible polynomial~$g \in {\bf C} [\chi , \xi ]$. Given~$M$ in the centralizer $\cC (L)$ then $ \phi^{-1}(\sigma (M)) $ is also a power of~$g$.
\end{Corollary}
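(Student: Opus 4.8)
The plan is to push the commutation relation $[L,M]=0$ through the \texttt{Dixmier test} (Lemma~\ref{lema-Dix-2.7}) and then run a unique-factorization argument in the polynomial ring ${\bf C}[\chi,\xi]$. Throughout I work with the test-filtration for $L$ supplied by Lemma~\ref{lem-filtration}, and I set $v=\delta(L)$ and $w=\delta(M)$, so that $\sigma(L)$ and $\sigma(M)$ are the principal symbols of $L$ and $M$ in the associated $\delta_{p,q}$-grading and $\phi^{-1}(\sigma(L))$, $\phi^{-1}(\sigma(M))$ are the corresponding homogeneous polynomials.

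The first step is to show that $T=0$, where $[L,M]=T+U$ is the unique decomposition of Lemma~\ref{lema-Dix-2.7}(1). Since $M\in\cC(L)$ we have $[L,M]=0$. By property~(b) the operator $T$ is $\delta$-homogeneous of degree $v+w-(p+q)$, while by property~(c) every component of $U$ has strictly smaller $\delta$-degree. Hence the homogeneous component of degree $v+w-(p+q)$ of $T+U$ is exactly $T$; as $T+U=0$, this component vanishes, and therefore $T=0$ (and $U=0$ as well).

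Next I would invoke the equivalence in Lemma~\ref{lema-Dix-2.7}(2): from $T=0$ we obtain condition~(2c), i.e.\ there is a constant $c\in{\bf C}$ with $\sigma(M)^{v}=c\,\sigma(L)^{w}$. Because $\phi$ is an isomorphism of graded rings, $\phi^{-1}$ is a ring homomorphism, and applying it gives, in ${\bf C}[\chi,\xi]$,
\[
\big(\phi^{-1}(\sigma(M))\big)^{v}=c\,\big(\phi^{-1}(\sigma(L))\big)^{w}.
\]
Writing $P=\phi^{-1}(\sigma(L))$ and $Q=\phi^{-1}(\sigma(M))$, this reads $Q^{v}=c\,P^{w}$.

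Finally, the hypothesis says $P$ is a power of the irreducible polynomial $g$, say $P=\lambda g^{k}$ with $\lambda\in{\bf C}^{\times}$ and $k\geq 1$. Since $g$ is non-constant and $L\ne\partial^{n}$ is in normal form, one checks that $v=\delta(L)\geq 1$ (otherwise $\sigma(L)$ would be a univariate polynomial in $\xi$ that is a power of a ${\bf C}$-irreducible, forcing $L=\partial^{n}$), so the exponent argument is not vacuous. Then $Q^{v}=c\lambda^{w}g^{kw}$ is a nonzero scalar multiple of a power of the single irreducible $g$. As ${\bf C}[\chi,\xi]$ is a unique factorization domain, every irreducible factor of $Q$ must be associate to $g$, and matching exponents yields $Q=\mu g^{\ell}$ for some $\mu\in{\bf C}^{\times}$ and $\ell\geq 0$ with $v\ell=kw$. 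Thus $\phi^{-1}(\sigma(M))$ is a power of $g$, as asserted; the trivial case $M\in{\bf C}$ (where $w=0$ and $\ell=0$) is already subsumed. Given the \texttt{Dixmier test}, the only genuinely delicate point is the reduction to $T=0$, which rests on the homogeneity of $T$ together with the strict $\delta$-degree drop of $U$; the remaining steps are the formal transport along $\phi^{-1}$ and a routine comparison of factorizations.
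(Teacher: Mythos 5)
Your proof is correct and follows exactly the route the paper intends: the corollary is stated there as an immediate consequence of the \texttt{Dixmier test}, namely that $[L,M]=0$ forces $T=0$ in Lemma~\ref{lema-Dix-2.7}, so condition (2c) gives $\sigma(M)^{v}=c\,\sigma(L)^{w}$, and unique factorization in ${\bf C}[\chi,\xi]$ finishes the argument. Your write-up merely makes explicit the details the paper leaves implicit (the reduction to $T=0$, the transport along $\phi^{-1}$, and the check that $v\geq 1$), all of which are sound.
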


\begin{Corollary}\label{corolario-Dix2} Given $ L $ and $ M $ two non-zero operators in $ A_1 ( {\bf C}) $. Assume $ \phi^{-1}(\sigma (L)) = \big(\xi^p + \chi^2\big)^2$ for some positive integer $p$. If~$M$ is in the centralizer $\cC (L)$, then $\ord(M) $ is congruent with $0$ or $p$ modulo $2p$.
\end{Corollary}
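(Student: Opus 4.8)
The plan is to combine the multiplicativity of the symbol with the Dixmier test and then read the order of $M$ off the shape of its symbol. Work with the filtration for which $\phi^{-1}(\sigma(L)) = (\xi^p+\chi^2)^2$, and write the weight of a monomial $x^i\partial^j$ as $ai+bj$; homogeneity of $\xi^p+\chi^2$ forces $bp = 2a$, and in particular $a,b>0$. First I would note that $M\in\cC(L)$ means $[L,M]=0$, so in the decomposition $[L,M]=T+U$ of Lemma~\ref{lema-Dix-2.7}(1) the top piece $T$ must vanish: otherwise $\delta(T)=\delta(L)+\delta(M)-(a+b)$ would, via $T=-U$, both equal $\delta(U)$ and strictly exceed it. Condition (2c) of the same lemma then gives
\[
\sigma(M)^{\delta(L)} = c\,\sigma(L)^{\delta(M)},\qquad c\in\mathbf{C}.
\]
Applying the graded isomorphism $\phi^{-1}$ and abbreviating $S=\phi^{-1}(\sigma(M))$, $v=\delta(L)>0$, $w=\delta(M)$, this becomes the identity $S^{v}=c\,(\xi^p+\chi^2)^{2w}$ in the UFD $\mathbf{C}[\chi,\xi]$.

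The core step is to deduce that $S$ is a constant times a power of $\xi^p+\chi^2$. If $p$ is odd then $\xi^p+\chi^2$ is irreducible and this is exactly Corollary~\ref{corolario-Dix1} with $g=\xi^p+\chi^2$, yielding $S=c'(\xi^p+\chi^2)^k$. The case I expect to be the main obstacle is $p$ even, where $\xi^p+\chi^2=(\chi+\mathrm{i}\xi^{p/2})(\chi-\mathrm{i}\xi^{p/2})=:g_1g_2$ splits into two distinct irreducibles, so Corollary~\ref{corolario-Dix1} does not apply directly. Here I would argue inside $\mathbf{C}[\chi,\xi]$ that the only possible irreducible factors of $S$ are $g_1$ and $g_2$, say $S=c'g_1^{\alpha}g_2^{\beta}$; comparing exponents in $S^{v}=c\,(g_1g_2)^{2w}$ forces $v\alpha=v\beta=2w$, hence $\alpha=\beta$ and $S=c'(g_1g_2)^{\alpha}=c'(\xi^p+\chi^2)^{\alpha}$. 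Thus for every parity of $p$ we obtain $S=c'(\xi^p+\chi^2)^{k}$ with $k$ a non-negative integer; the real content of the even case is that the two conjugate factors are forced to occur with equal multiplicity.

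Finally I would recover $\ord(M)$ from $S$. Since $\sigma(M)=\Ini(M)$ is homogeneous of weight $w$ and the monomial $\xi^{pk}$ occurs in $(\xi^p+\chi^2)^k$ with nonzero coefficient, we get $w=bpk$ and the term $\partial^{pk}$ genuinely appears in $\Ini(M)$, hence in $M$, so $\ord(M)\ge pk$. Conversely any monomial $x^{i'}\partial^{j'}$ of $M$ satisfies $ai'+bj'\le\delta(M)=bpk$, and dropping $ai'\ge0$ gives $bj'\le bpk$, i.e.\ $j'\le pk$; here it is essential that $b>0$, which is precisely what guarantees that the highest $\xi$-power in the weighted symbol really computes the ordinary differential order of $M$. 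Hence $\ord(M)=pk$, and reducing modulo $2p$ yields $\ord(M)\equiv0\pmod{2p}$ when $k$ is even and $\ord(M)\equiv p\pmod{2p}$ when $k$ is odd, which is the asserted congruence.
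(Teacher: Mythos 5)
Your proof is correct, and its skeleton is the paper's own: pick the filtration under which $\phi^{-1}(\sigma(L))=\big(\xi^p+\chi^2\big)^2$ (the paper takes $\Lambda(i,j)=pi+2j$), use the Dixmier test to show $\phi^{-1}(\sigma(M))$ is a power $\big(\xi^p+\chi^2\big)^k$, identify $\ord(M)=pk$, and reduce modulo $2p$. The genuine difference is that you prove more than the paper does. The paper's argument is a one-line appeal to Corollary~\ref{corolario-Dix1}, whose hypothesis asks $\phi^{-1}(\sigma(L))$ to be a power of an \emph{irreducible} polynomial; that holds only when $p$ is odd, since for even $p$ one has $\xi^p+\chi^2=\big(\chi+{\rm i}\xi^{p/2}\big)\big(\chi-{\rm i}\xi^{p/2}\big)$. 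Your unique-factorization step --- every irreducible factor of $S=\phi^{-1}(\sigma(M))$ must be one of the two conjugate factors, and comparing exponents in $S^{v}=c\,\phi^{-1}(\sigma(L))^{w}$ forces them to appear with equal multiplicity --- is precisely what is needed to make the even case rigorous; it is the same device the paper itself uses in the proof of Theorem~\ref{thm-order} (for $\xi^2+\chi^p$ with $p$ even), but which its proof of this corollary never invokes. You also make explicit a point the paper leaves silent: that $\ord(M)$ really equals the $\xi$-degree of the weighted symbol, which needs both weights positive (the monomial $\xi^{pk}$ occurs in $\big(\xi^p+\chi^2\big)^k$, giving $\ord(M)\ge pk$, while $2j'\le pi'+2j'\le 2pk$ gives the reverse bound). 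So your write-up is a faithful but strictly more complete version of the paper's proof.
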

\begin{proof}Take $\Lambda(i,j)=pi+2j $ and consider the $\delta_{p,2}$-filtration of $A_1 ({\bf C})$. Then, by Corollary~\ref{corolario-Dix1}, the order of $M$ is $\ord(M)=pb$ for some non negative integer $b$. But, $b=2s+\epsilon$ with $\epsilon =0$ or~$1$. Then the result follows.
\end{proof}

\begin{Example}Let us consider $L_{2p} = \big(\partial^p+x^2+\alpha\big)^2+2\partial$ for some positive integer~$p$. Take $\Lambda(i,j)=pi+2j $ and consider the $\delta_{p,2}$-filtration of $A_1 ({\bf C})$. By Corollary~\ref{corolario-Dix2}, for any monic operator $M$ in the centralizer $ \cC (L_{2p} )$, we have
\begin{gather*}
 \ord{(M)}=0 \ \ \mod (2p) \qquad \text{or} \qquad \ord{(M)}=p \ \ \mod (2p).
\end{gather*}
By Theorem~\ref{thm-good} if the centralizer is nontrivial, it equals $\cC(L_{2p})={\bf C}[L_{2p},X_p]$ with $X_p$ the operator of minimal order $p(2s+1)$, $s\neq 0$, in $\cC(L_{2p})$.
Observe that for $p=3$ this is the Fourier transform of Dixmier's example \eqref{eq-Dixintro}~\cite{Dix}. In this case by Theorem~\ref{thm-good} the centralizer is nontrivial and $X_3$ has order $9$. The pair $L$, $B=X_3$ is true rank~$3$.
\end{Example}

\section[Order 4 operators in $A_1({\bf C})$]{Order 4 operators in $\boldsymbol{A_1({\bf C})}$}\label{sec-A1C}

In this section we apply the previous results to operators of order $4$ in $A_1({\bf C})$. We will prove that for any operator of order~$4$, if non trivial, its centralizer is the ring of a plane curve (see Corollary \ref{thm-centralizerL4} and important consequences in Proposition~\ref{thm-B}).

First, recall that, as in Gr\"unbaum's work~\cite{Grun}, a general fourth order differential operator in~$K[\partial]$ can be given by
\begin{gather}\label{op-grun}
 \left(
 \partial^2 +\dfrac{c_2 }{2}
 \right)^2
 +2c_1 \partial +c_1' +c_0 , \qquad \text{with}\quad c_0 , c_1 , c_2 \in K ,
\end{gather}
after a Liouville transformation. For this reason, in this section we will consider operators of order $4$ in $A_1 ({\bf C})$ of the form
\begin{gather}\label{op-weyl}
 L_4= \big( \partial^2 +V(x) \big)^2 +U(x)\partial +W(x) ,
\end{gather}
with $U(x)$, $V(x)$ and $W(x)$ polynomials in ${\bf C}[x]$.

\begin{Remark} In \cite{Grun} it is proved that equation~\eqref{op-grun} with $c_1 \equiv 0$ is the self-adjoint case. Moreover, A. Mironov (see \cite{Mi1}) considered the self-adjoint case in the first Weyl algebra, that is $U\equiv 0$ in~\eqref{op-weyl}. He proved the Novikov's conjecture: the existence of $M$ in $\cC(L_4)$ such that $h(L_4,M)=0$ for $h(\lambda,\mu)=\mu^2+R_{2g+1}(\lambda)$ the defining polynomial of a genus~$g$ curve~$\Gamma$; furthermore this operator~$L_4$ has an order $2$ factor at each point of~$\Gamma$.
\end{Remark}

\subsection{Centralizers}\label{sec-cen4}

Our goal is to prove that the centralizer $\cC(L_4)$ of $L_4$ in $\cD=A_1({\bf C})$ is either equal to ${\bf C}[L_4]$ or to ${\bf C}[L_4,B]$, for an operator $B$ of order $4k+2$ such that $L_4$, $B$ is a true rank~$2$ pair. To avoid trivial cases, we assume~$L_4$ to be irreducible in~$\cD$. For instance if $L_4=\big(\partial^2+V(x)\big)^2$ and $B=\partial^2+V(x)$ then $\cC (L_4) =\cC(B)$ is a rank $1$ algebra.

\begin{Theorem}\label{thm-order}Let $L_4$ be an irreducible operator of order $4$ in $A_1 ({\bf C})$ as in~\eqref{op-weyl}. Assume that $\deg(V) > \max \big\{ \frac{1}{2} \deg (U) , \frac{1}{2} \deg (W) \big\}$. Then any~$M$ commuting with~$L_4$ has even order.
\end{Theorem}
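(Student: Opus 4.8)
The plan is to read the test-filtration of $L_4$ off its Newton diagram and then feed the resulting principal symbol into the Dixmier test (Lemma~\ref{lema-Dix-2.7}). Write $d=\deg V$, $e=\deg U$, $f=\deg W$ and let $v_d\neq 0$ be the leading coefficient of $V$. Expanding $L_4=(\partial^2+V)^2+U\partial+W$, the upper-right boundary of $\mathcal{N}(L_4)$ is governed by the vertices $(0,4)$ from $\partial^4$, $(d,2)$ from the term $2v_dx^d\partial^2$, and $(2d,0)$ from $V^2$; the hypotheses $d>\tfrac12 f$ and $d>\tfrac12 e$ keep the vertices $(f,0)$ of $W$ and $(e,1)$ of $U\partial$ from reaching the far vertex $(2d,0)$. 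First I would observe that $(d,2)$ is the midpoint of $(0,4)$ and $(2d,0)$, so these three points are collinear on the line $2i+dj=4d$; by Lemma~\ref{lem-filtration} this is the leading edge, the test-filtration is $\delta_{p,q}$ with $(p,q)=(2,d)$, and $\delta(L_4)=4d$. Provided $(e,1)$ lies strictly below this edge -- equivalently $e<\tfrac32 d$ -- the leading contributions assemble into
\[
\phi^{-1}(\sigma(L_4))=\xi^4+2v_d\chi^d\xi^2+v_d^2\chi^{2d}=\big(\xi^2+v_d\chi^d\big)^2 .
\]

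Next, given $M\in\cC(L_4)$, I would run the Dixmier test in this filtration. Since $[L_4,M]=0$ and, by Lemma~\ref{lema-Dix-2.7}(1), the commutator splits as a homogeneous leading part $T=\Ini(T)$ plus a remainder of strictly smaller $\delta$, vanishing of the commutator forces $T=0$; Lemma~\ref{lema-Dix-2.7}(2c) then yields $\sigma(M)^{\delta(L_4)}=c\,\sigma(L_4)^{\delta(M)}$ for a constant $c$. Factoring in the UFD ${\bf C}[\chi,\xi]$ and writing $\phi^{-1}(\sigma(L_4))=\prod_iP_i^{m_i}$ into distinct irreducibles, unique factorisation gives $\phi^{-1}(\sigma(M))=\lambda\prod_iP_i^{n_i}$ with $\delta(L_4)\,n_i=\delta(M)\,m_i$. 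Since $\ord(M)$ equals the degree of $\sigma(M)$ in $\xi$, this produces the identity $\ord(M)\,\delta(L_4)=4\,\delta(M)$ and, more importantly, controls the individual exponents. For the symbol $\big(\xi^2+v_d\chi^d\big)^2$ the conclusion is immediate: if $d$ is odd then $\xi^2+v_d\chi^d$ is irreducible of $\xi$-degree $2$ and enters with exponent $m=2$, giving $\ord(M)=2n_1$; if $d$ is even it splits as $(\xi+{\rm i}\sqrt{v_d}\chi^{d/2})(\xi-{\rm i}\sqrt{v_d}\chi^{d/2})$, two distinct irreducibles of $\xi$-degree $1$ occurring in $\sigma(M)$ with equal exponents, again summing to an even order.

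The main obstacle is that the shape of $\mathcal{N}(L_4)$ depends on $\deg U$: the hypothesis only guarantees $e<2d$, whereas the symbol above is valid when $e<\tfrac32 d$. In the range $\tfrac32 d\le e<2d$ the vertex $(e,1)$ of $U\partial$ lies on or above the edge $(0,4)$--$(2d,0)$, so Lemma~\ref{lem-filtration} returns a steeper filtration and the $U\partial$-term survives in the symbol: for $\tfrac32 d<e$ one gets $\phi^{-1}(\sigma(L_4))=\xi(\xi^3+u_e\chi^e)$, and for the boundary value $e=\tfrac32 d$ one gets the binary quartic form $\xi^4+2v_d\chi^d\xi^2+u_e\chi^{3d/2}\xi+v_d^2\chi^{2d}$ in $(\xi,\chi^{d/2})$. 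I would rerun the factorisation argument in each case; the identity $\ord(M)\,\delta(L_4)=4\,\delta(M)$ together with the integrality of the $n_i$ reduces evenness (in fact divisibility of $\ord(M)$ by $4$) to excluding the configurations in which $\sigma(L_4)$ is a perfect power $P_1^4$ or $(P_1P_2)^2$ of linear-in-$\xi$ factors. Here the remaining data close the argument: the vanishing of the $\xi^3$-coefficient of $\sigma(L_4)$ together with $v_d\neq 0$ and $u_e\neq 0$ rules out exactly these degenerate factorisations. Verifying this exclusion uniformly across all admissible Newton-polygon regimes, rather than only in the generic one where the symbol is a clean square, is the delicate point of the proof; the collinearity of $(0,4)$, $(d,2)$, $(2d,0)$ and the explicit nonvanishing leading coefficients are the levers that make it go through.
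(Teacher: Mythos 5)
Your proof is correct, and its core is the same as the paper's: choose the test-filtration of Lemma~\ref{lem-filtration}, compute $\phi^{-1}(\sigma(L_4))$, and combine the Dixmier test (Lemma~\ref{lema-Dix-2.7}(2c)) with unique factorization in ${\bf C}[\chi,\xi]$ to constrain $\ord(M)=\deg_\xi \sigma(M)$; in the regime $\deg U<\frac{3}{2}\deg V$ your argument and the paper's proof coincide (the paper splits into $\deg V$ odd/even and compares multiplicities of $\xi^2+\chi^p$, resp.\ $\xi\pm{\rm i}\chi^{s}$, exactly as you do). Where you genuinely add something is your last paragraph. The paper's proof asserts $\sigma(L_4)=\big(\xi^2+\chi^p\big)^2$ directly from the stated hypothesis, but that symbol computation requires $\delta_{2,p}(U\partial)=2\deg U+p<4p$, i.e., $\deg U<\frac{3}{2}\deg V$, whereas the theorem only assumes $\deg U<2\deg V$; the ranges $\deg U=\frac{3}{2}\deg V$ and $\frac{3}{2}\deg V<\deg U<2\deg V$, which you isolate, are not covered by the proof as written. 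Your treatment of them is sound: for $\deg U>\frac{3}{2}\deg V$ the test-filtration becomes $\delta_{3,e}$, the symbol $\xi\big(\xi^3+u_e\chi^e\big)$ has only simple irreducible factors (since $u_e\neq0$), and the multiplicity comparison even forces $4\mid\ord(M)$; for $\deg U=\frac{3}{2}\deg V$ the symbol is a binary quartic in $\big(\xi,\chi^{d/2}\big)$, an odd-order $M$ would force the shape $P_1^4$ and an order $\equiv 2 \pmod{4}$ the shape $(P_1P_2)^2$, and both are excluded by the vanishing $\xi^3$-coefficient together with $v_d\neq0$, $u_e\neq0$. So your write-up proves the theorem under its stated hypothesis, while the paper's argument, read literally, proves it under the implicit stronger hypothesis $\deg U<\frac{3}{2}\deg V$. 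One step that you (like the paper) leave tacit and should record explicitly: $\ord(M)=\deg_\xi\sigma(M)$ holds here because $\sigma(M)$ is a product of monic-in-$\xi$ quasi-homogeneous factors, so the pure power $\xi^{\deg_\xi\sigma(M)}$ really occurs in $\sigma(M)$, while no monomial $x^i\partial^j$ of $M$ with $j>\deg_\xi\sigma(M)$ can have weight at most $\delta(M)$.
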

\begin{proof}Let $p=\deg(V)$ be an odd integer. Let us consider the $\delta_{2,p}$-filtration of $A
_1 ({\bf C})$, with $\Lambda_{2,p} (i,j)=2i+pj$. By~\eqref{eq-symbol}, we have $\sigma (L_4)=\big[\sigma \big(\partial^2 +V\big)\big] ^2 = \big(\xi^2 +\chi^p \big)^2$. But, by \texttt{Dixmier test} (Corollary~\ref{corolario-Dix1}), $\sigma (M ) ^{4p}=\sigma (L_4 )^{pm} =\big(\xi^2 + \chi^p \big)^{2pm}$, with $m=\ord (M)$. Therefore, since $ \xi^2 + \chi^p $ is irreducible in $k [\chi , \xi ] $, $\sigma (M ) =\big(\xi^2 + \chi^p \big)^q $ for some $q$. Then $4pq=2pm$. Thus, $M$ has even order in this case.
\medskip
Next assume $p=2s=\deg(V)$, an even integer. Now, we have
\begin{gather*}
\sigma (M ) ^{4p}=\sigma (L_4 )^{pm} =\big(\xi^2 + \chi^p \big)^{2pm}= \big(\xi + i\chi^s \big)^{2pm}\big(\xi - i\chi^s \big)^{2pm}, \qquad \text{with} \quad m=\ord (M).
\end{gather*}
Then $\sigma (M )= \big(\xi + i\chi^s \big)^{a}\big(\xi - i\chi^s \big)^{b}$, because $k [\chi , \xi ] $ is an unique factorization domain. Hence, comparing multiplicities, we have $4pa=2pm$ and $4pb=2pm$. So, $M$ has even order, as was stated in the theorem.
\end{proof}

\begin{Remark}The previous result was proved in~\cite{DS} for the case $V(x)=\alpha_3 x^3+\alpha_2 x^2+\alpha_1 x+\alpha_0$, $U(x)=0$ and $W(x)=\alpha_3 g (g+1)$, with $\alpha_3\neq 0$, using different methods than those described in this work.
\end{Remark}

\begin{Lemma}\label{lemma-Miro} Let $L_4$ be an irreducible operator of order $4$ in $A_1({\bf C})$ as in~\eqref{op-weyl}. If $\cC(L_4 )\not= \mathbf{C}[L_4 ] $ then $\deg(V) > \max \big\{ \frac{1}{2} \deg (U) , \frac{1}{2} \deg (W) \big\}$.
\end{Lemma}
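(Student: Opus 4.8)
The plan is to prove the contrapositive: assuming $\deg(V)\le\max\{\tfrac12\deg(U),\tfrac12\deg(W)\}$, I will show the centralizer is trivial, $\cC(L_4)=\mathbf{C}[L_4]$. The core intermediate claim is that a nontrivial centralizer forces the principal symbol of $L_4$, computed in the test-filtration of Lemma~\ref{lem-filtration}, to be a genuine perfect power; I then read off the two degree inequalities from the shape of that power.

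First I would set up a \emph{symbol descent}. Let $\Lambda_{p,q}$ be the test-filtration for $L_4$ and write $\sigma(L_4)=\prod_i g_i^{e_i}$ as a product of distinct irreducibles in the UFD $\mathbf{C}[\chi,\xi]$. Since $(0,4)\in\mathcal{N}(L_4)$ lies on the maximal face by Lemma~\ref{lem-filtration}, the monomial $\xi^4$ appears in $\sigma(L_4)$, so $\deg_\xi\sigma(L_4)=4$. Take any $M\in\cC(L_4)$ with $\ord(M)>0$. By the Dixmier test (Lemma~\ref{lema-Dix-2.7}(2c)), $\sigma(M)^{\delta(L_4)}=c\,\sigma(L_4)^{\delta(M)}$ for some constant $c$, hence $\sigma(M)=c'\prod_i g_i^{e_i\delta(M)/\delta(L_4)}$, which requires $\delta(L_4)\mid e_i\,\delta(M)$ for every $i$. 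If $\sigma(L_4)$ is \emph{not} a perfect power, i.e.\ $\gcd_i e_i=1$, this divisibility upgrades to $\delta(L_4)\mid\delta(M)$, so $\sigma(M)=c'\sigma(L_4)^k$ with $k=\delta(M)/\delta(L_4)$. Then $M-c'L_4^k\in\cC(L_4)$ has strictly smaller $\delta$ (using $\sigma(L_4^k)=\sigma(L_4)^k$ from \eqref{eq-symbol}); since $\delta\ge 0$ on nonzero operators, iterating this reduction drives the order down to $0$ and yields $M\in\mathbf{C}[L_4]$. Thus $\cC(L_4)\neq\mathbf{C}[L_4]$ forces $\sigma(L_4)$ to be a perfect power.

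Next I would pin down the shape of that power. A perfect power of $\xi$-degree $4$ is $g^d$ with $d\in\{2,4\}$; because $L_4$ has no $\partial^3$-term, $\sigma(L_4)$ has no $\xi^3$-term. This rules out $d=4$ (which would force $\sigma(L_4)=\xi^4$, contradicting that the test-filtration carries a second vertex on the maximal face) and forces $g=\xi^2+b$ with $b\in\mathbf{C}[\chi]$ weight-homogeneous, hence a single monomial $b=c\chi^\beta$. So $\sigma(L_4)=(\xi^2+c\chi^\beta)^2$, exactly the form appearing in the proof of Theorem~\ref{thm-order}. The mixed term $2c\chi^\beta\xi^2$ can only come from the $\partial^2$-coefficient $2V$ in \eqref{op-weyl}, which pins $\beta=\deg(V)$ and identifies the filtration as $\Lambda_{2,\deg V}$. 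Finally: the $\chi^{2\beta}$-term must be supplied by the constant coefficient $V^2+V''+W$ with coefficient exactly $c^2$, so the leading term of $W$ cannot reach degree $2\beta$, giving $\deg(W)<2\deg(V)$; and the absence of any $\xi^1$-term means the $\partial$-coefficient $2V'+U$ must sit strictly below the maximal face, which gives $\deg(U)<\tfrac32\deg(V)<2\deg(V)$. Together these are precisely $\deg(V)>\max\{\tfrac12\deg U,\tfrac12\deg W\}$.

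The main obstacle is the descent step: I must verify that the entire chain $M,\,M-c'L_4^k,\dots$ stays in the regime where the Dixmier test applies, and that ``not a perfect power'' genuinely upgrades the exponentwise divisibility $\delta(L_4)\mid e_i\,\delta(M)$ to $\delta(L_4)\mid\delta(M)$; this is exactly where the UFD structure of $\mathbf{C}[\chi,\xi]$ together with $\gcd_i e_i=1$ is used, and it is the generalization of Corollary~\ref{corolario-Dix1} from ``power of an irreducible'' to arbitrary $\sigma(L_4)$. A secondary point requiring care is the boundary configuration $\deg(W)=2\deg(V)$ (and $\deg(U)=\tfrac32\deg(V)$): there the relevant Newton point lands \emph{on} the maximal face and destroys the perfect-square property rather than preserving it, so these borderline cases are precisely the ones excluded by the strict inequalities.
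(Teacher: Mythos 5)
Your proof is correct, and while its skeleton matches the paper's---work in a suitable filtration, apply the Dixmier test, and exploit that a nontrivial centralizer forces $\sigma(L_4)$ to be a perfect square---your execution is genuinely different and more complete, so a comparison is worthwhile. The paper's proof is a four-line sketch in the contrapositive direction: it fixes the $\delta_{2,p}$-filtration with $p=\deg V$, asserts that under the negated inequality the leading form is $\partial^4+c_1x^u\partial$ or $\partial^4+c_2x^w$, and concludes that since this is not a square ``the centralizer is trivial \dots\ because of Dixmier's lemma''. The implication ``leading form not a perfect power $\Rightarrow$ trivial centralizer'' is never actually proved there; it is precisely your descent lemma (UFD factorization of $\sigma(L_4)$, the gcd argument upgrading $\delta(L_4)\mid e_i\,\delta(M)$ to $\delta(L_4)\mid\delta(M)$, then subtracting $c'L_4^k$ and inducting on $\delta$), and the closest statement in the paper, Corollary~\ref{corolario-Dix1}, is itself given without proof and only for symbols that are powers of an irreducible. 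Your forward-direction shape analysis also sidesteps a literal inaccuracy in the paper: in the $\delta_{2,p}$-filtration, if $u=\deg U\geq 2\deg V$ then $x^u\partial$ has weight $2u+p\geq 5p>4p$, so the leading form does not contain $\partial^4$ at all; the intended argument requires passing to the filtration that puts $(0,4)$ and $(u,1)$ on a common face, which your appeal to the abstract test-filtration of Lemma~\ref{lem-filtration} handles automatically (and it even yields the sharper bound $\deg U<\frac{3}{2}\deg V$). Two small points to tighten: in the $d=2$ branch you should note $c\neq 0$ (otherwise $\sigma(L_4)=\xi^4$, excluded by the second vertex exactly as in your $d=4$ case), and in the descent it is $\delta(M)$, not the order of $M$, that strictly decreases---termination holds because $\delta$ takes nonnegative integer values on nonzero operators and $\delta=0$ forces a constant.
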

\begin{proof}Let us consider the $\delta_{2,p}$-filtration, with $p=\deg (V)$. Observe that if $\deg (V)\leq \frac{1}{2} \deg (U) \allowbreak =\frac{u}{2}$, the leading form of $L_4$ is $\partial^4+c_1 x^u \partial$; or if $\deg (V) \leq \frac{1}{2}\deg (W) =\frac{w}{2}$, this leading form is $\partial^4+c_2 x^w$. In neither case its leading form is the square of another form of lower degree, thus the centralizer is trivial. Consequently the statement follows, because of Dixmier's lemma~\ref{lema-Dix-2.7}.
\end{proof}

 We recall that by Theorem \ref{thm-good} the centralizer of and operator $L_4$ is the free ${\bf C}[L_4]$-module with basis $X=\{X_j\,|\, j\in J\}$, being $J$ the subset of $I=\{0,1,2,3\}$ of those $j\in I$ for which there exists an operator $X_j\in \mathcal{C}(L_4)$ of minimal order congruent with $j$ mod $4$. Therefore, we can establish the following claim.

\begin{Corollary}\label{thm-centralizerL4}
Let $L_4$ be an irreducible operator of order $4$ in $A_1({\bf C})$ as in \eqref{op-weyl}, {such that $\cC(L_4 )\not= \mathbf{C}[L_4 ]$}.
Then
\begin{gather*}\mathcal{C}(L_4)={\bf C}[L_4]\langle 1, X_2 \rangle={\bf C}[L,X_2]\end{gather*}
for an operator $X_2$ of minimal order $2(2g+1)$, for $g\neq 0$, that is~$\mathcal{C}(L_4)$ equals the free ${\bf C}[L_4]$-module with basis $\{1,X_2\}$. Furthermore the pair $L_4$, $X_2$ is BC and true rank~$2$.
\end{Corollary}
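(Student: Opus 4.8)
The plan is to assemble the structural results already established, with Goodearl's cardinality count doing the main work. First I would invoke Lemma~\ref{lemma-Miro}: since $\cC(L_4)\neq{\bf C}[L_4]$ by hypothesis, the inequality $\deg(V)>\max\big\{\frac{1}{2}\deg(U),\frac{1}{2}\deg(W)\big\}$ holds, which is exactly the hypothesis of Theorem~\ref{thm-order}. Hence every operator commuting with $L_4$ has even order; this is the crucial input, as it forbids the residues $1$ and $3$ from occurring among the orders of centralizer elements.

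Next I would apply Goodearl's Theorem~\ref{thm-good} with $n=4$. Let $X\subseteq\{0,1,2,3\}$ be the set of residues modulo $4$ realized by orders of elements of $\cC(L_4)$, so that the basis cardinality $t=|X|$ divides $4$. By the previous step $X\subseteq\{0,2\}$, and $0\in X$ always. If $X=\{0\}$ then $\cC(L_4)={\bf C}[L_4]$, contrary to hypothesis; therefore $X=\{0,2\}$ and $t=2$. Writing $X_2$ for the minimal-order generator $Q_2$ of order $\equiv 2\ (\mod 4)$, Theorem~\ref{thm-good} gives $\cC(L_4)={\bf C}[L_4]\langle 1,X_2\rangle={\bf C}[L_4,X_2]$, free over ${\bf C}[L_4]$ on $\{1,X_2\}$, with $\ord(X_2)=4k+2=2(2g+1)$ for $g=k\geq 0$.

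To rule out $g=0$, suppose instead $\ord(X_2)=2$. Since $X_2^2\in\cC(L_4)$ has order $4$, expanding it in the basis $\{1,X_2\}$ over ${\bf C}[L_4]$ and matching orders forces $X_2^2=\alpha L_4+\beta X_2+\gamma$ with constants $\alpha\neq 0$, $\beta$, $\gamma$ (an order-$4$ term can only arise from a degree-one polynomial in $L_4$). Thus $L_4=\alpha^{-1}\big(X_2^2-\beta X_2-\gamma\big)\in{\bf C}[X_2]$, and factoring the quadratic over ${\bf C}$ yields $L_4=\alpha^{-1}(X_2-r_1)(X_2-r_2)$, a product of two commuting order-$2$ operators, contradicting the irreducibility of $L_4$. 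Hence $g\neq 0$. I expect this reduction to be the only step requiring genuine computation, the one subtlety being to argue that irreducibility is genuinely violated rather than merely obtaining a polynomial relation.

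Finally, $X_2\notin{\bf C}[L_4]$ because every element of ${\bf C}[L_4]$ has order divisible by $4$, whereas $\ord(X_2)\equiv 2\ (\mod 4)$; together with $\cC(L_4)={\bf C}[L_4,X_2]$ and the irreducibility of $L_4$, Definition~\ref{def-BC} shows that $L_4,X_2$ is a BC pair. Its rank is $\gcd\big(4,2(2g+1)\big)=2$, so Theorem~\ref{prop-BCtrue} yields at once that $L_4,X_2$ is a true rank~$2$ pair, completing the proof.
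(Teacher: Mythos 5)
Your proof is correct and follows essentially the same route as the paper: Lemma~\ref{lemma-Miro} plus Theorem~\ref{thm-order} force all orders in $\cC(L_4)$ to be even, Goodearl's Theorem~\ref{thm-good} then yields the free ${\bf C}[L_4]$-basis $\{1,X_2\}$, and Definition~\ref{def-BC} together with Theorem~\ref{prop-BCtrue} give the BC and true rank~$2$ conclusions. Your explicit exclusion of $g=0$, via the factorization $L_4=\alpha^{-1}(X_2-r_1)(X_2-r_2)$ contradicting irreducibility, is a step the paper leaves implicit (it only motivates the irreducibility hypothesis by example), and it is a worthwhile addition.
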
\label{prop-truerank}

\begin{proof}By Lemma \ref{lemma-Miro}, Theorem \ref{thm-good}, and Theorem \ref{thm-order} and the hypothesis, the centralizer of~$L_4$ is the free ${\bf C}[L_4]$-module with basis $\{1,X_2\}$, in notations of Theorem~\ref{thm-good}, that is
\begin{gather*}{\bf C}[L_4]\langle 1, X_2\rangle=\{p_0(L_4)+p_1(L_4)X_2\,|\, p_0,p_1\in {\bf C}[L_4]\}.\end{gather*}
By \eqref{eq-freemod}, it equals ${\bf C}[L_4,X_2]$.
The pair $L_4$, $X_2$ satisfies Definition~\ref{def-BC} and Theorem~\ref{prop-BCtrue} implies it is true rank~$2$.
\end{proof}

\begin{Remark}The previous corollary is only the first example of how to apply \texttt{Dixmier test} to prove results on the structure of the basis of the centralizer of an operator of the first Weyl algebra. We believe that similar results can be obtained for higher order operators.
\end{Remark}

By Theorem \ref{thm-wilson}, given a true rank~$2$ pair $L_4$, $M$ in $A_1 ({\bf C})$ , the spectral curve $\Gamma$ is defined by a polynomial $h$ in ${\bf C}[\lambda,\mu]$ that verifies
\begin{gather}\label{eq-h2}
 f=\dres(L_4-\lambda,M-\mu)=h^2.
\end{gather}
In addition $\Gamma$ is a hyperelliptic curve defined by an equation $\mu^2=b_0(\lambda)+b_1(\lambda)\mu$ with $b_0(\lambda),b_1(\lambda)\allowbreak \in {\bf C}[\lambda]$. Thus $M^2=b_0(L_4)+b_1(L_4)M$ and
\begin{gather}\label{eq-freemod}
{\bf C}[L_4,M]=\left\{\sum \alpha_{i,j}L_4^iM^j\,|\, \alpha_{i,j}\in {\bf C} \right\}=\{p_0(L_4)+p_1(L_4)M\,|\, p_0,p_1\in {\bf C}[L_4]\}.
\end{gather}

\begin{Remark}\label{rem-trivial}
Assume $\cC (L_4) = {\bf C } [ L_4 , X_2 ]\neq {\bf C } [ L_4]$, for an operator $X_2$ of minimal order $2(2g+1)$, for $g\neq 0$.
\begin{enumerate}\itemsep=0pt
 \item {Observe that if $M=p_0(L_4)+p_1(L_4)X_2$ has order $4q$, $q>0$ then it means that}
 \begin{gather*}\ord(p_0(L_4))\geq \ord (p_1(L_4))X_2.\end{gather*}
Note that a nonzero $M_1=M-p_0(L_4)$ has order $4q+2$, for some $q>0$.

\item In particular, we can detect if $M=p_0(L_4)$ by means of the differential resultant. In fact, by the Poison formula for the differential resultant (see \cite{Ch}) then $\dres(L_4-\lambda,M-\mu)$ equals $(p_0(\lambda)-\mu)^4$. Obviously, in this case ${\bf C}[L_4,M]={\bf C}[L_4]$.

\item If $\ord(M)=\ord(X_2)$ then $M-X_2\in {\bf C}[L_4]$ and ${\bf C}[L_4,M]={\bf C}[L_4,X_2]$. Otherwise, if $\ord(M)>\ord(X_2)$ then ${\bf C}[L_4,M]\subset {\bf C}[L_4,X_2]$, the equality cannot hold.
\end{enumerate}
\end{Remark}

The next result contains essential claims to establish an algorithm.

\begin{Proposition}\label{thm-B}
Let $L_4$ be an irreducible operator of order $4$ in $A_1 ({\bf C})$ as in \eqref{op-weyl}. Assume $\cC (L_4) = {\bf C } [ L_4 , X_2 ]\neq {\bf C } [ L_4]$, for an operator $X_2$ of minimal order $2(2g+1)$, for $g\neq 0$.
Given $M=p_0 (L_4) +p_1 (L_4) X_2$ in $\mathcal{C}(L_4)$ with $p_1 \not=0$, then:
\begin{enumerate}\itemsep=0pt
 \item[$1.$] There exists an operator $B_g$ in $\cC(L_4)$ such that ${\bf C } [ L_4 , X_2 ]={\bf C } [ L_4 , B_g ]$ and the spectral curve associated to the pair $L_4$, $B_g$ is a hyperellipctic curve defined by a polynomial $h(\lambda,\mu)=\mu^2-R_{2g+1}(\lambda)$, with $R_{2g+1}(\lambda)\in {\bf C}[\lambda ]$ of degree $2g+1$.

 \item[$2.$] $\dres(L_4-\lambda,M-\mu)=\big(\mu^2-b_1(\lambda) \mu-b_0(\lambda)\big)^2$, with $b_0,b_1\in {\bf C}[\lambda]$ and $p_0(L_4)=b_1(L_4)/2$. \label{lem_remove_p0}

 \item[$3.$] $M_1=M-p_0(L_4)$, has order $2(2q+1)$, with $p_1\in {\bf C}[\lambda]$ of degree $4(q-g)$ and it verifies $M_1^2=R_{2q+1}(L_4)$, for $R_{2q+1}(\lambda)=p_1(\lambda)R_{2g+1}(\lambda)$.
\end{enumerate}
\end{Proposition}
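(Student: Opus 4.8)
The plan is to read off all three statements from the true rank~$2$ structure of $L_4$, $X_2$ supplied by Corollary~\ref{thm-centralizerL4}, after normalizing the spectral curve by completing the square.

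\emph{Part 1.} Since $L_4$, $X_2$ is true rank~$2$, Theorem~\ref{thm-wilson} and the discussion preceding the proposition give a hyperelliptic spectral curve $\mu^2=\beta_0(\lambda)+\beta_1(\lambda)\mu$ together with the operator identity $X_2^2=\beta_0(L_4)+\beta_1(L_4)X_2$. Matching orders in this identity (with $\ord(L_4)=4$, $\ord(X_2)=2(2g+1)$) forces $\deg\beta_0=2g+1$, and since $\beta_1(L_4)X_2$ must have order strictly below $\ord(X_2^2)=4(2g+1)$, also $\deg\beta_1\le g$; in particular $\ord(\beta_1(L_4))=4\deg\beta_1\le 4g<\ord(X_2)$. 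I would then set
\[
B_g:=X_2-\tfrac12\,\beta_1(L_4),
\]
so that $\ord(B_g)=\ord(X_2)=2(2g+1)$, ${\bf C}[L_4,B_g]={\bf C}[L_4,X_2]$ (the two generators differ by an element of ${\bf C}[L_4]$), and
\[
B_g^2=R_{2g+1}(L_4),\qquad R_{2g+1}:=\beta_0+\tfrac14\beta_1^2,\qquad \deg R_{2g+1}=2g+1 .
\]
As $R_{2g+1}$ has odd degree, $\mu^2-R_{2g+1}(\lambda)$ is irreducible and vanishes on $(L_4,B_g)$, so it is the defining polynomial of the spectral curve of $L_4$, $B_g$, proving Part~1.

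\emph{Parts 2 and 3.} Working in the basis $\{1,B_g\}$ I write $M=p_0(L_4)+p_1(L_4)B_g$ with $p_1\ne 0$ (the coefficient of $B_g$ is the original coefficient of $X_2$, since $B_g$ and $X_2$ differ by an element of ${\bf C}[L_4]$). Squaring and using $B_g^2=R_{2g+1}(L_4)$,
\[
M^2=\big(p_0^2+p_1^2R_{2g+1}\big)(L_4)+2\,(p_0p_1)(L_4)\,B_g ,
\]
and replacing $p_1(L_4)B_g=M-p_0(L_4)$ turns this into $M^2=b_0(L_4)+b_1(L_4)M$ with $b_1=2p_0$ and $b_0=p_1^2R_{2g+1}-p_0^2$; this is exactly the claim $p_0(L_4)=\tfrac12 b_1(L_4)$. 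Setting $h:=\mu^2-b_1(\lambda)\mu-b_0(\lambda)$ we have $h(L_4,M)=0$, and $\dres(L_4-\lambda,M-\mu)=h^2=(\mu^2-b_1\mu-b_0)^2$ will follow from Theorem~\ref{thm-wilson}(1) once the rank of ${\bf C}[L_4,M]$ and the irreducibility of $h$ are settled (next paragraph), giving Part~2. For Part~3, subtracting $p_0(L_4)=\tfrac12 b_1(L_4)$ yields $M_1=M-p_0(L_4)=p_1(L_4)B_g$, whence $\ord(M_1)=4\deg p_1+2(2g+1)=2(2q+1)$ with $q=g+\deg p_1$ (so $\ord(p_1(L_4))=4(q-g)$), and
\[
M_1^2=p_1(L_4)^2\,R_{2g+1}(L_4)=R_{2q+1}(L_4),\qquad R_{2q+1}:=p_1^2R_{2g+1},\qquad \deg R_{2q+1}=2q+1 .
\]

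\emph{The main obstacle} is justifying $\dres(L_4-\lambda,M-\mu)=h^2$ rather than a higher power, precisely in the borderline case $\ord(M)\equiv 0 \pmod 4$, where $L_4$, $M$ is merely a fake rank~$4$ pair. The resolution is that Theorem~\ref{thm-wilson}(1) returns the $r$-th power for $r=\rk({\bf C}[L_4,M])$, and this rank is always~$2$: the algebra contains $M_1=M-p_0(L_4)$ of order $2(2q+1)\equiv 2\pmod 4$, while every element of $\cC(L_4)\supseteq {\bf C}[L_4,M]$ has even order by Theorem~\ref{thm-order}, so the $\gcd$ of the orders equals~$2$. Irreducibility of $h$ then follows because its discriminant $b_1^2+4b_0=4\,p_1^2R_{2g+1}$ has odd degree $2q+1$ and so is not a square in ${\bf C}[\lambda]$; hence $\mu^2-b_1\mu-b_0$ does not factor over ${\bf C}(\lambda)$ and is the irreducible polynomial of Theorem~\ref{thm-wilson}(1), completing Part~2.
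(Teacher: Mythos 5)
Your proof is correct, but it takes a genuinely different route from the paper in its two nontrivial parts, and the comparison is instructive. For Part 1 the paper does not count orders in the free module as you do; it invokes the \texttt{Dixmier test}: taking principal symbols in $B_g^2=R_a(L_4)$ with respect to the $\delta_{2,p}$-filtration, $p=\deg V$, it gets $\sigma(B_g)^2=\big(\xi^2+\chi^p\big)^{2a}$ and concludes $a=2g+1$ by comparing exponents. Your bound $\deg\beta_1\le g$ from order bookkeeping is more elementary, avoids the filtration machinery (and the tacit appeal to Lemma~\ref{lemma-Miro} that the symbol argument needs), and in addition supplies the irreducibility of $\mu^2-R_{2g+1}(\lambda)$, which the paper leaves implicit when it calls this the defining polynomial of the spectral curve. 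For Part 2 the logical direction is reversed: the paper \emph{starts} from \eqref{eq-h2}, i.e., from $\dres(L_4-\lambda,M-\mu)=h^2$ with $h=\mu^2-b_1\mu-b_0$, and then identifies $p_0=b_1/2$ by expanding $(M-p_0(L_4))^2$ in two ways and comparing coordinates in the basis $\{1,X_2\}$; you instead construct $b_1=2p_0$ and $b_0=p_1^2R_{2g+1}-p_0^2$ from the module side, prove $h$ irreducible (odd-degree discriminant $4p_1^2R_{2g+1}$), verify $\rk({\bf C}[L_4,M])=2$ via $M_1$, and only then invoke Theorem~\ref{thm-wilson}(1) to obtain the square. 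This buys a genuine gain in rigor: \eqref{eq-h2} is justified in the paper only for true rank~2 pairs, whereas the $M$ of the proposition may have order $\equiv 0 \pmod 4$, making $L_4$, $M$ a fake-rank pair; your observation that Theorem~\ref{thm-wilson} only needs the rank of the \emph{algebra}, which is always $2$ here, fills exactly the step the paper glosses over. Part 3 is essentially identical in both. Two further remarks: like the paper's own proof (which uses $X_2^2=R_{2g+1}(L_4)$), your argument implicitly replaces $X_2$ by the normalized generator $B_g$ and renames its ${\bf C}[L_4]$-coordinates, which is the only reading under which the claim $p_0=b_1/2$ is literally true; and you silently corrected two typos in the statement, since the correct claims are $\ord(p_1(L_4))=4(q-g)$, i.e., $\deg p_1=q-g$, and $R_{2q+1}=p_1^2R_{2g+1}$, in agreement with the paper's own usage, e.g., \eqref{eq-R9} in Example~\ref{ex4}.
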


\begin{proof}1.~We know that $X_2^2=b_0( L_4)+b_2(L_4)X_2$. We easily check that $B=X_2-(1/2)b_1(L_4)$ verifies
$B^2 = R_a (L_4)$, for $R_a(\lambda)\in {\bf C}[\lambda]$ of degree~$a$. Since $\mathcal{C}(L_4)={\bf C}[L_4, B]$ it remains to prove that $a=2g+1$. Let us consider the $\delta_{2,p}$-filtration of $A_1 ({\bf C})$, with $p=\deg(V)$. Taking symbols in $B^2 = R_a (L_4)$, we have
\begin{gather*}
\sigma (B)^2 =\sigma (L_4)^a = \big(\xi^2 + \chi^p \big)^{2a} .
\end{gather*}
Then $2 (2g+1) =2a$. Finally $a=2g+1$.

2.~We know that \eqref{eq-h2} holds for $h=\mu^2-b_1(\lambda) \mu-b_0(\lambda)$ with $h(L_4,M)=0$. Let us prove~2.
On one hand $(M-p_0(L_4))^2$ equals $p_1(L_4)^2X_2^2=p_1(L_4)^2 R_{2g+1}(L_4)$ and on the other it equals
\begin{gather*}
 b_1 M+b_0 +p_0^2-2p_0 M= (b_1-2p_0 )p_1 X_2+b_0 +b_1 p_0 -p_0^2.
\end{gather*}
Thus $ p_1^2 R_{2g+1} =(b_1-2p_0 )p_1 X_2+b_0 +b_1 p_0 -p_0^2$.
But, since $\{1,X_2\}$ is a basis of the free ${\bf C}[L_4]$-module ${\bf C}[L_4,X_2 ]$, it holds that $p_0(L_4)=b_1(L_4)/2$.

3.~In order to have 3, it is enough to compute $\dres(L_4-\lambda,M_1 -\mu)$ taking into account 1 and 2.
\end{proof}

\begin{Remark}One can decide if a nontrivial $M$ of a given order exists in the centralizer of $L_4$, we computed it through a Gr\" unbaum approach \cite{Grun} (solving $[L_4,M]=0$ directly), see examples in Section \ref{sec-contruction}. For certain families of operators in the Weyl algebra $\cC(L_4)\neq {\bf C}[L_4]$ it is guaranteed in \cite{DM, Mi1, Mi2, Mo1,Mo2}, see also~\cite{PZ}.
\end{Remark}

\subsection{The algorithm} \label{sec-contruction}

Let $L_4$ be an irreducible operator of order $4$ in $A_1 ({\bf C})$ as in~\eqref{op-weyl}. Let us assume that $\cC(L_4 )\not= \mathbf{C}[L_4 ]$.
By Proposition~\ref{thm-B}, there exists an operator $B_g$ of minimal order $2(2g+1)$, $g\neq 0$, such that
\begin{gather*}
 \mathcal{C}(L_4)={\bf C}[L_4, B_g]\qquad \text{and} \qquad B_g^2=R_{2g+1}(L_4).
\end{gather*}

Now, let us suppose we are given an operator $M$ in the centralizer $\mathcal{C}(L_4)\setminus \mathbf{C}[L_4 ]$. Then $\rk(L_4,M)=2$. The goal of this section is to decide {\it effectively} if $L_4$, $M$ is a BC pair and if not to compute a suitable $B_g$ from $L_4$ and $M$ to have $L_4$, $B_g $ a BC pair; then
\begin{gather*}
 {\bf C}[L_4 ]\subseteq {\bf C}[L_4, M ]\subseteq {\bf C}[L_4, B_g] =\mathcal{C}(L_4).
\end{gather*}
Consequently, by means of the differential resultant (see~\cite{Prev}), we can compute the spectral curve $\Gamma= \operatorname{Spec} (\mathcal{C}(L_4) )$. Moreover, by Corollary \ref{thm-centralizerL4}, the centralizer $\mathcal{C}(L_4)$ is a free ${\bf C} [L_4 ]$-module; hence,
 $M=p_0 (L_4) +p_1 (L_4) B_g$ for some polynomials $p_0, p_1\in {\bf C}[\lambda ]$.

Recall that as $L_4$ and $M$ commute, by Proposition~\ref{thm-B}, they are related by an algebraic equation of the type $\mu^2 -b_1(\lambda) \mu -b_0(\lambda)=0$.
Even if we assume that $M^2=R_{2q+1}(L_4)$, that is $M=p_1 (L_4) B_g$, in general it will not be clear how to identify $p_1(\lambda)$ or $g$ from the factorization of $R_{2q+1}(\lambda)$.

\begin{Remark}\label{remark-Mi}One method to identify $p_1$ would be to compute the roots $\lambda_j$ of $R_{2q+1}(\lambda)$ with multiplicities and then check if $L_4-\lambda_j$ is a factor of $M$. We should observe that factoring $R_{2q+1}(\lambda)$ can generate important problems since the roots can have multiplicity greater than one (since the curve can be singular). In addition, it may not be possible to compute exactly the complex roots of $R_{2q+1}(\lambda)$, this is the case of $R_5(\lambda)$ in \eqref{eq-hB10} of Example~\ref{ex1} or $R_9(\lambda)$ in \eqref{eq-R9} of Example \ref{ex4}. Having approximate roots of the polynomial $R_{2q+1}(\lambda)=h(\lambda,\mu)-\mu^2$ from \eqref{eq-h2} does not guarantee the correct factorization of the operator~$M$, since the factorization occurs at each point of the spectral curve and this point cannot be in a nearby curve (which would be the case if we consider approximate roots of $R_{2q+1}(\lambda)$). Even if the roots and multiplicities are assumed to be known exactly, the combinatorics of the problem gives multiple choices since the genus $g$ is also a variable in this problem.
\end{Remark}

The next construction is an alternative method to the proposal given in Remark \ref{remark-Mi}. Our goal is to develop a symbolic algorithm whose input is an operator $M$ that commutes with the fixed $L_4$, and whose output is a generator $B\not= L_4$ of the centralizer $\mathcal{C}(L_4)$ and the genus $g$ of the spectral curve $\Gamma$.
One of the achievements of this construction is the determination of the genus of the spectral curve associated with the operator $L_4$, in both the self-adjoint and non self-adjoint cases, starting with any operator $M$ that commutes with $L_4$.

\smallskip

{\bf The construction.} From now on we assume that $M=p_1 (L_4) B_g$ of order $m=2(2q+1)$, $q> 0$, and also that $p_1 (0)=1$, see Proposition~\ref{thm-B}.
We will fix a value of $g$ from $1$ to $q-1$ and check if an operator $B_g$ of order $2(2g+1)$ exists in $\cC(L_4)$. Moreover, if such $B_g$ does not exist for $g=1,\ldots ,q-1$, then we conclude that~$L_4$, $M$ is a BC pair, that is $\mathcal{C}(L_4)={\bf C}[L_4,M]$ and $B_g=M$, with $g=q$.

The procedure to obtain $B_g$ is based on an iterated division process. Observe that the ring of differential operators $K[\partial]$ is a (left) Euclidean domain that contains $A_1({\bf C})$, with $K={\bf C}(x)$. Moreover, we will use the construction of a system of equations for a family of free parameters $ \vec{a} = (a_1, \dots, a_d ) $ for a certain length $ d $ determined by a recursive process. Theorem~\ref{thm-al-2} guarantees that the given construction effectively allows for an explicit operator $ B_g $ verifying the required conditions.

 Recall that $\ord (M)=2 (2q+1)$ with $q>0$. Let us fix $g\in \{1,\ldots ,q-1\}$.
We use the left division algorithm in $K[\partial]$ to construct a sequence of quotients and remainders to rewrite $M$ as follows.
First, by left division by~$L_4$, we compute the remainder sequence
\begin{gather}\label{eq-divisionForM}
\Delta (M) =\{\R_1,\ldots , \R_{g+1}\} , \end{gather}
where
\[ M=L_4 \Q_{1} +\R_{1} ,\qquad \Q_{j}=L_4 \Q_{j+1} +\R_{j+1 } ,\qquad 1 \leq j \leq g,\]
with bounds for the orders of the remainders $ \ord ( \R_j ) \leq 3 $, and $\ord (\Q_{g})=4(q-g)-2$. Thus we decompose $M$ as
\begin{gather*}
 M= \sum_{j=0}^{g} L_4^j \R_{j+1} + L_4^{g+1} \Q_{g+1} .
\end{gather*}
Observe that $\R_1,\ldots , \R_{g+1}$ are thus known differential operators in~$K[\partial]$ for the given~$M$.

Recall that we are looking for $B_g$, that could be decomposed using left division by $L_4$ as $B_g= \sum\limits_{j=0}^{g-1} L_4^j \R_{j+1,B} + L_4^g \Q_{g B}$, with $\ord(\R_{j,B})\leq 3$ and $\ord(\Q_{g,B})=2$. Thus, we are looking for $\R_{j+1,B}$, $j=0,\ldots ,g-1$ and $\Q_{g,B}$ in $K[\partial]$.

With this purpose, for the fixed $g\in \{1,\ldots ,q-1\}$ let us consider a vector $\vec{a}= (a_1 , a_2 , \dots , a_{d(g)})$ of free parameters over ${\bf C}$ that will be used to define an extended remainder sequence \begin{gather*}\Delta_{\vec{a}}^g = \left\{ R_{1,B}, \dots , R_{g,B} , Q_{g,B}\right\}\end{gather*}
of operators in $K[\vec{a}][\partial]$ assumed to be of order less than $4$.
Let us define the polynomial
\begin{gather*}
 p_{\vec{a}}(\lambda)=
 1+a_1\lambda+\cdots +a_{d(g)} \lambda^{d(g)}+\lambda q(\lambda), \qquad \text{where} \qquad d(g):=\min\{q-g,g\}
\end{gather*}
for a polynomial $q(\lambda)\in {\bf C}[\lambda]$ which is taken to be equal to zero if $q-g< g$,
and the operator
\begin{gather} \label{eq-Ba}
 B_{\vec{a}}^g: =L_4^g Q_{g, B} +\sum_{j=0}^{g-1} L_4^j R_{j+1,B} \qquad \text{in} \qquad K[\vec{a}] [\partial ].
\end{gather}
Forcing now $M=p_{\vec{a}} (L_4) B_{\vec{a}}^g$, since $\ord (R_{j,B})$ and $\ord (\R_j)$ are smaller than the order of~$L_4$, comparing the terms in $L_4^j$, $j=0,\ldots ,g-1$ we obtain
\begin{gather}\label{eq-recursionR}
\R_1=R_{1,B},\qquad
 \R_{j+1} =
 \begin{cases}
 R_{j+1, B} +a_1 R_{j,B}+\cdots + a_{j}R_{1,B} & \text{if } 0<j<d(g),\\
 R_{j+1, B} +a_1 R_{j,B}+\cdots + a_{d(g)}R_{j+1-d(g),B} & \text{if } j\geq d(g).
 \end{cases}
\end{gather}
From the term in $L_4^g$
\begin{gather}\label{eq-recursionQ}
 \R_{g+1} = Q_{g,B}+a_1R_{g,B} +a_2 R_{g-1,B}+\cdots + a_{d(g)}R_{g-d(g)+1,B}.
\end{gather}

Thus from \eqref{eq-recursionR} and \eqref{eq-recursionQ} we obtain the extended remainder sequence $\Delta_{\vec{a}}^g $ whose operators we now define as
\begin{gather}
R_{1,B}:= \R_1, \nonumber\\
R_{j,B} := \R_j - \begin{cases}
 \left(a_1 R_{j-1,B}+\cdots + a_{j-1}R_{1,B}\right) & \text{if } j\leq d(g),\\
 \left(a_1 R_{j-1,B}+\cdots + a_{d(g)}R_{j-d(g),B}\right) & \text{if } j > d(g),\end{cases}
\qquad \text{for} \quad j=2, \dots ,g, \nonumber\\
Q_{g,B} := \R_{g+1} - \left( a_1R_{g,B} +a_2 R_{g-1,B}+\cdots + a_{d(g)}R_{g-d(g)+1,B} \right) . \label{eq-RB}
\end{gather}
Observe that the order of each $R_{j,B}$ is at most $3$, each $R_{j,B}$ belongs to $K[ a_1 , \dots ,a_{j-1}] [\partial ]$ and $Q_{g,B} \in K [ \vec{a}] [\partial ]$.

Finally, to determine if $B_g$ exists, we look for $\vec{\alpha}=(\alpha_1 , \dots ,\alpha_{d(g)})\in {\bf C}^{d(g)}$
such that $M$ equals $p_{\vec{\alpha}} (L_4) B_{\vec{\alpha}}^g$ and $[L_4, B_{\vec{\alpha}}^g]=0$, where $p_{\vec{\alpha}}$ and $B_{\vec{\alpha}}^g$ are obtained by replacing $\vec{a}$ by $\vec{\alpha}$ in $p_{\vec{a}}$ and $B_{\vec{a}}^g$ respectively.
Thus, forcing
\begin{gather*}\big[L_4, B_{\vec{a}}^g \big]=0\end{gather*}
the parameters $\vec{a}$ can be adjusted.
Observe that the numerator $\cN$ of $[L_4 , B_{\vec{a}}^g ]$ is a differential ope\-rator in ${\bf C}[ \vec{a}][x] [\partial]$.
Let us consider the system of equations obtained from the coefficients~$q_{i,j}(\vec{a})$ of $x^i\partial^j$ in $\cN$
\begin{gather}\label{eq-Sa}
 \textsc{s}(\vec{a})_g=\{q_{i,j}(\vec{a})=0\}, \qquad \text{with} \quad q_{i,j}(\vec{a})\in {\bf C}[ \vec{a}].
\end{gather}

This construction proves the next result.

\begin{Theorem}\label{thm-al-2}Let $L_4$ be an irreducible operator of order $4$ in $A_1 ({\bf C})$ as in~\eqref{op-weyl}.
Given an operator $M\in \cC(L_4)$ of order $m=2(2q+1)$, $q> 0$, such that $M^2=R_{2q+1}(L_4)$ and $g\in \{1,\ldots ,q-1\}$, the following statements are equivalent:
\begin{enumerate}\itemsep=0pt
 \item[$1.$] There exists an operator $B_g$ in $\cC(L_4)$ of order $2(2g+1)$ such that $M=p_1(L_4)B_g$, for some $p_1\in {\bf C}[\lambda]$.

 \item[$2.$] There exists $\vec{\alpha}$ in ${\bf C}^{d(g)}$, where $d(g):=\min\{q-g,g\}$, such that $\big[L_4,B_{\vec{\alpha}}^g\big]=0$, or equivalently~$\vec{\alpha}$ is a solution of~$\textsc{s}(\vec{a})_g$.
\end{enumerate}
\end{Theorem}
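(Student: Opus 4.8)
The plan is to prove the equivalence by carrying the construction through to the end, reading off both implications from the coefficient-matching relations that define $B_{\vec a}^g$. The backbone of the argument is the iterated left division \eqref{eq-divisionForM}: it produces the first $g+1$ ``base-$L_4$ digits'' $\R_1,\dots,\R_{g+1}$ of $M$, each of order at most $3$, and the relations \eqref{eq-recursionR}--\eqref{eq-recursionQ} are nothing but the low-order part (levels $0$ through $g$) of the convolution identity that expresses $M=p_{\vec a}(L_4)B_{\vec a}^g$ in terms of digits. Formula \eqref{eq-RB} then \emph{defines} $R_{j,B}$ and $Q_{g,B}$ so that these levels match identically in $\vec a$. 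A bookkeeping point I would record at the outset is that, in \eqref{eq-recursionR}--\eqref{eq-recursionQ}, reaching back $k$ steps requires both $k\le g$ (not to fall below the first remainder) and $k\le q-g$ (since the coefficient of $\lambda^k$ in $p_1$ vanishes for $k>\deg p_1=q-g$); hence only $a_1,\dots,a_{d(g)}$ with $d(g)=\min\{q-g,g\}$ ever occur, which is exactly the dimension of the parameter space in statement~2.

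For $1\Rightarrow2$, I would start from a given $B_g\in\cC(L_4)$ of order $2(2g+1)$ with $M=p_1(L_4)B_g$. Comparing orders forces $\deg p_1=q-g$, and under the normalization $p_1(0)=1$ adopted in the construction, expanding $B_g$ by left division in base $L_4$ produces its own digits $\R_{1,B},\dots,\R_{g,B},\Q_{g,B}$. The convolution identity relating the digits of $M$, $p_1$ and $B_g$ at levels $0,\dots,g$ is precisely \eqref{eq-recursionR}--\eqref{eq-recursionQ} with $a_k$ the coefficients of $p_1$. Therefore, setting $\vec\alpha$ equal to the first $d(g)$ coefficients of $p_1$ and feeding it into \eqref{eq-RB} recovers $R_{j,B}=\R_{j,B}$ and $Q_{g,B}=\Q_{g,B}$, so that $B_{\vec\alpha}^g=B_g$. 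Since $B_g$ commutes with $L_4$ by assumption, $[L_4,B_{\vec\alpha}^g]=0$, i.e.\ $\vec\alpha$ solves $\textsc{s}(\vec a)_g$.

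For $2\Rightarrow1$, I would set $B:=B_{\vec\alpha}^g$; the hypothesis $[L_4,B]=0$ puts $B\in\cC(L_4)$ immediately. By Corollary~\ref{thm-centralizerL4}, $\cC(L_4)=\mathbf{C}[L_4,X_2]$ is a free $\mathbf{C}[L_4]$-module of rank two with $X_2$ of minimal order $2(2g_0+1)$, while the hypothesis $M^2=R_{2q+1}(L_4)$ together with Proposition~\ref{thm-B} forces $M=p_1^*(L_4)X_2$ with no additive $\mathbf{C}[L_4]$-term. Writing $B=c_0(L_4)+c_1(L_4)X_2$ with $c_1\ne0$, the desired factorization $M=p_1(L_4)B$ holds, by freeness of $\{1,X_2\}$, if and only if $c_0=0$ (then $p_1=p_1^*/c_1$, and comparing leading terms yields $\ord B=2(2g+1)$). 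The task thus reduces to showing that $B$ carries no $\mathbf{C}[L_4]$-part. I would prove this by comparing base-$L_4$ digits: since $B$ is built through \eqref{eq-RB} from the digits of the \emph{pure} operator $M=p_1^*(L_4)X_2$, whose lowest digit equals that of $X_2$ because $p_1^*(0)=1$, an inductive matching of digits gives $c_0(0)=0$ and then propagates to $c_0\equiv0$.

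I expect this last step---verifying $c_0=0$, equivalently that mere commutation of $B_{\vec\alpha}^g$ already forces the full factorization $M=p_{\vec\alpha}(L_4)B_{\vec\alpha}^g$ rather than only agreement in the low digits used to build it---to be the main obstacle. The delicate content is that the higher base-$L_4$ digits (levels $g+1,\dots,q$), which are \emph{not} imposed by \eqref{eq-RB}, must come out correctly once $\vec\alpha$ is chosen to solve $\textsc{s}(\vec a)_g$; the induction above must exploit essentially the purity of the equation $M^2=R_{2q+1}(L_4)$ and the rank-two freeness of $\cC(L_4)$, since without the hypothesis $M=p_1^*(L_4)X_2$ a spurious $\mathbf{C}[L_4]$-component could survive. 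Everything else is routine order-bookkeeping on the iterated divisions.
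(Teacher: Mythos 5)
Your direction $1\Rightarrow 2$ is correct and is exactly the paper's argument: by uniqueness of the remainder sequence \eqref{eq-divisionForM}, any $B_g$ with $M=p_1(L_4)B_g$, $p_1(0)=1$, must coincide with $B_{\vec{\alpha}}^g$ for $\vec{\alpha}$ the first $d(g)$ coefficients of $p_1$, and commutation of $B_g$ makes $\vec{\alpha}$ a solution of $\textsc{s}(\vec{a})_g$. The genuine gap is in $2\Rightarrow 1$, which you reduce to ``$c_0=0$'' and then leave open as ``the main obstacle''. Two things go wrong. First, the reduction is inexact: with $B_{\vec{\alpha}}^g=c_0(L_4)+c_1(L_4)X_2$ and $M=p_1^*(L_4)X_2$, freeness gives $M=p(L_4)B_{\vec{\alpha}}^g$ if and only if $c_0=0$ \emph{and} $c_1\mid p_1^*$ in ${\bf C}[\lambda]$, and it is the divisibility that is the real issue. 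Second, the statement you are trying to prove (that commutation of $B_{\vec{\alpha}}^g$ forces the factorization of $M$ through it) is false, so no digit induction can establish it. Concretely: let $\cC(L_4)={\bf C}[L_4,X_2]$ with $\ord X_2=6$ (Case~2 of Example~\ref{ex1}; by Example~\ref{ex3} one may take $X_2=B_6$, normalized via Proposition~\ref{thm-B} so that $X_2^2=R_3(L_4)$), write $X_2=\xi_0+L_4\xi_1$ for its $L_4$-digits, and set $M:=p_1^*(L_4)X_2$ for any $p_1^*(\lambda)=1+b_1\lambda+b_2\lambda^2+b_3\lambda^3+b_4\lambda^4$ of degree $4$; then $\ord M=22$, $q=5$, $M^2=R_{11}(L_4)$, so all hypotheses hold. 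For $g=2$ (so $d(g)=2$) the construction \eqref{eq-RB}, \eqref{eq-Ba} yields
\begin{gather*}
B_{\vec{a}}^2=X_2+(b_1-a_1)L_4X_2+\big(b_2-a_1b_1+a_1^2-a_2\big)L_4^2\xi_0,\\
\big[L_4,B_{\vec{a}}^2\big]=\big(b_2-a_1b_1+a_1^2-a_2\big)L_4^2[L_4,\xi_0],
\end{gather*}
where $[L_4,\xi_0]\neq0$ (otherwise $\xi_0$, and then $\xi_1$, would lie in $\cC(L_4)$ with order $<6$, contradicting minimality of $X_2$). Hence $\textsc{s}(\vec{a})_2$ reduces to the single equation $a_2=b_2-a_1b_1+a_1^2$, a one-parameter family of solutions. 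For each solution, $B_{\vec{\alpha}}^2=\big(1+(b_1-\alpha_1)L_4\big)X_2$ commutes with $L_4$, yet $M=p(L_4)B_{\vec{\alpha}}^2$ would force $1+(b_1-\alpha_1)\lambda$ to divide $p_1^*$, which fails for all but finitely many $\alpha_1$ (and for $\alpha_1=b_1$ the order drops to $6$, not $2(2g+1)$). Note that the paper's own one-sentence proof of this direction asserts precisely this false implication, so you located the weak point correctly; but your proposal does not repair it.

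The equivalence itself survives, and the repair must produce a witness generally \emph{different} from $B_{\vec{\alpha}}^g$. From a solution $\vec{\alpha}$, your sub-claim $c_0=0$ is provable: $M-\tilde{p}(L_4)B_{\vec{\alpha}}^g$, with $\tilde{p}(\lambda):=1+\alpha_1\lambda+\cdots+\alpha_{d(g)}\lambda^{d(g)}$, has vanishing digits up to level $g$, hence equals $L_4^{g+1}D$ with $D=d_0(L_4)+d_1(L_4)X_2\in\cC(L_4)$; the component along $1$ gives $\tilde{p}c_0=-\lambda^{g+1}d_0$, so $\lambda^{g+1}\mid c_0$ since $\tilde{p}(0)=1$, which is incompatible with $\ord B_{\vec{\alpha}}^g\le 4g+3$ unless $c_0=0$ (orders $\equiv 0$ and $\equiv 2$ modulo $4$ cannot cancel). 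Then $B_{\vec{\alpha}}^g=c_1(L_4)X_2$ with $c_1\neq 0$, provided $\R_1\neq 0$, i.e., $p_1^*(0)\neq 0$ --- a normalization the theorem implicitly requires: for $M=L_4^2X_2$ with $\ord X_2=10$ and $g=1$, one gets $B_{\vec{a}}^1\equiv 0$, so statement~2 holds vacuously while statement~1 fails. Writing $\ord X_2=2(2g_0+1)$, it follows that $4g_0+2\le\ord B_{\vec{\alpha}}^g\le 4g+3$, hence $g\ge g_0$. Finally, since ${\bf C}$ is algebraically closed, $p_1^*$ (of degree $q-g_0\ge g-g_0$) has a monic factor $c$ of degree exactly $g-g_0$, and $B_g:=c(L_4)X_2\in\cC(L_4)$ has order $2(2g+1)$ and satisfies $M=(p_1^*/c)(L_4)B_g$, which is statement~1. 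This detour through Corollary~\ref{thm-centralizerL4}, Proposition~\ref{thm-B} and polynomial factorization over ${\bf C}$ is the content missing from your proposal (and needed to make the paper's proof of this direction complete).
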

\begin{proof}
The previous construction guaranties that any $B_g$ in $\cC(L_4)$ such that $M=p_1(L_4)B_g$ has to be of the form~\eqref{eq-Ba}. Therefore, if $[L_4,B_{\vec{a}}^g]=0$ has no solution $\vec{\alpha}$ in ${\bf C}^{d(g)}$ then such $B_g$ does not exist. Conversely, if there exists $\vec{\alpha}$ in ${\bf C}^{d(g)}$ such that $\big[L_4,B_{\vec{\alpha}}^g\big]=0$ then $B_{\vec{\alpha}}^g$ is an operator of order $2(2g+1)$ in $\cC(L_4)$ such that $M=p_{\vec{\alpha}}(L_4)B_{\vec{\alpha}}^g$.
\end{proof}

Let $g^*$ be the minimum of the set of non negative integers
\[
\cG= \big\{ g\in \{1,\ldots ,q-1\} \colon \exists \, \vec{\alpha} =( \alpha_1 ,\dots ,\alpha_{d(g)} ) \in {\bf C}^{d(g)} \textrm{ solution of } \textsc{s}(\vec{a})_g
\big\} .
\]
By Corollary~\ref{thm-centralizerL4}, $\cG$ is a non empty set, and $g^*$ always exists. From the previous theorem we can conclude:

\begin{Corollary} Given an operator $M\in \cC(L_4)$ of order $m=2(2q+1)$, $q> 0$, such that $M^2=R_{2q+1}(L_4)$, the centralizer $\cC(L_4)$ equals ${\bf C}\big[L_4, B_{\vec{\alpha}^*}^{g^*}\big]$, where $\vec{\alpha}^*=( \alpha_1^* ,\dots ,\alpha_{d(g^*)}^* )$ is a~solu\-tion of $\textsc{s}(\vec{a})_{g^*}$.
\end{Corollary}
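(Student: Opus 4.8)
The plan is to identify the operator $B_{\vec{\alpha}^*}^{g^*}$ produced by the construction with the free-module generator supplied by Corollary~\ref{thm-centralizerL4}. First I would invoke that corollary: since $\cC(L_4)\neq{\bf C}[L_4]$, the centralizer is the free ${\bf C}[L_4]$-module ${\bf C}[L_4]\langle 1,B_{g_0}\rangle={\bf C}[L_4,B_{g_0}]$ generated by an operator $B_{g_0}$ of \emph{minimal} order $2(2g_0+1)$ for some $g_0\neq 0$, with $B_{g_0}^2=R_{2g_0+1}(L_4)$. Because $M\in\cC(L_4)$ satisfies $M^2=R_{2q+1}(L_4)$, Proposition~\ref{thm-B} lets me write $M=p_1(L_4)B_{g_0}$ with $p_1\in{\bf C}[\lambda]$ of degree $q-g_0\geq 0$ (normalized so that $p_1(0)=1$); in particular $g_0\leq q$.

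Next I would pin down $g^*$ by showing $\min\cG=g_0$. By Theorem~\ref{thm-al-2}, an index $g\in\{1,\dots,q-1\}$ lies in $\cG$ exactly when there is an operator $B_g\in\cC(L_4)$ of order $2(2g+1)$ with $M=p_1(L_4)B_g$. The key observation is that any such $B_g$ has order $2(2g+1)\equiv 2\pmod 4$, hence is not in ${\bf C}[L_4]$ (whose elements have order $\equiv 0\pmod 4$); it is therefore a genuine element of the centralizer of order congruent to $2$. By Goodearl's minimality in Theorem~\ref{thm-good}, $B_{g_0}$ realizes the least order among such elements, so no $g<g_0$ can belong to $\cG$, while $g_0$ itself belongs to $\cG$ whenever $g_0\leq q-1$ (taking $B_{g_0}$ as witness). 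Thus $g^*=\min\cG=g_0$.

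It then remains to compare two generators of equal order. By Theorem~\ref{thm-al-2}, $B_{\vec{\alpha}^*}^{g^*}$ is an operator of order $2(2g^*+1)=2(2g_0+1)$ lying in $\cC(L_4)={\bf C}[L_4]\langle 1,B_{g_0}\rangle$. Expanding it in this free module gives $B_{\vec{\alpha}^*}^{g^*}=p_0(L_4)+c\,B_{g_0}$; since its order equals $2(2g_0+1)\equiv 2\pmod 4$, the $B_{g_0}$-coefficient $c$ must be a nonzero constant and $\deg p_0\leq g_0$. A nonzero scalar multiple of $B_{g_0}$, modulo ${\bf C}[L_4]$, generates the same algebra, so ${\bf C}[L_4,B_{\vec{\alpha}^*}^{g^*}]={\bf C}[L_4,B_{g_0}]=\cC(L_4)$, as claimed.

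The main obstacle, and the only delicate point, is the boundary case $g_0=q$, in which $M=B_{g_0}$ is already a generator and the set $\cG\subseteq\{1,\dots,q-1\}$ is empty, so that $\min\cG$ is undefined. Here I would appeal to the convention recorded just before the statement: one sets $g^*=q$ and $B_{\vec{\alpha}^*}^{g^*}=M$, which is the degenerate instance $d(q)=0$, $p_{\vec{a}}\equiv 1$ of the construction, whence $L_4$, $M$ is a BC pair (Theorem~\ref{prop-BCtrue}) and ${\bf C}[L_4,M]=\cC(L_4)$ trivially. I would therefore make the dichotomy $g_0<q$ versus $g_0=q$ explicit and verify that the degenerate instance of the construction indeed returns $M$, so that the single formula ${\bf C}[L_4,B_{\vec{\alpha}^*}^{g^*}]=\cC(L_4)$ covers both the proper-multiple case $g_0<q$ and the BC-pair case $g_0=q$.
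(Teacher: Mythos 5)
Your proof is correct and takes essentially the same route as the paper, which obtains the corollary directly from Theorem~\ref{thm-al-2} combined with the minimal generator supplied by Corollary~\ref{thm-centralizerL4}; your Goodearl-minimality argument pinning down $g^*=g_0$ and the free-module comparison of two generators of order $2(2g_0+1)$ are exactly the details the paper leaves implicit in its one-line justification. If anything, you are more careful than the paper on the boundary case: the paper asserts that $\cG$ is non-empty, but this fails precisely when $L_4$, $M$ is already a BC pair (as in Example~\ref{ex3}, where $\textsc{s}(a_1)_1$ has no solution and $q=2$, so $\cG=\emptyset$), and the convention you invoke from the construction ($g^*=q$, $B^{g^*}_{\vec{\alpha}^*}=M$) is indeed what is needed for the statement to hold in that case.
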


\begin{Remark}\label{rem-Sa}The number of variables $a_i$ appearing in the system $\textsc{s}(\vec{a})_g$ is equal to $d(g)$, which depends on the fixed values of $q$ and $g$.
If a new variable $a_j$ appears in iteration $g$ of the algorithm, the polynomials of the system are linear in $a_j$.
Furthermore, all the polynomials $q_{i,j}$ in $\textsc{s}(\vec{a})_g$ will have the same structure, which depends on $Q_{g,B}$ (see step~\ref{step-9} of the algorithm), they will have the form $r_0+r_1a_1+\cdots +r_{g+1} p(a_1,\ldots ,a_{d(g)})$, so we solve linearly a subsystem of $g+1$ nonzero polynomials $q_{i,j}$ in $\textsc{s}(\vec{a})_g$ to obtain $\vec{\alpha}_0$ and then check if $\vec{\alpha}_0$ is a solution of $\textsc{s}(\vec{a})_g$. We illustrate this method in Example \ref{ex4}.
\end{Remark}

We automate the previous construction in the following algorithm.

\begin{Algorithm}[\texttt{BC pair}]\quad
\begin{itemize}\itemsep=0pt
\item \underline{\sf Given} $M$ in $\cC(L_4)$.
\item \underline{\sf Compute} $B$ such that $L_4$, $B$ is a BC pair, and its order.
\end{itemize}
\begin{enumerate}\itemsep=0pt
 \item $f:=\dres (L_4-\lambda ,M-\mu )$.
 \item Compute the square free part $h(\lambda ,\mu )=\mu^2-b_1(\lambda) \mu-b_0(\lambda)$ of $f$.
 \item $M:=M-\frac{1}{2}b_1(L_4)$.
 \item If $M=0$ then \underline{\sf return} `{\sf $M$ is a polynomial in ${\bf C}[L_4]$}'.
 \item g:=1.
 \item Compute the remainder sequence $\Delta(M)=\{ \R_1, \R_2\}$ as in \eqref{eq-divisionForM}.
 \item Use $\Delta(M)$ and \eqref{eq-RB} to construct $R_{1,B}$ and $Q_{1,B}$.
 \item $B_{\vec{a}}^g:=L_4 Q_{1,B}+R_{1,B}$ as in \eqref{eq-Ba}.
 \item From $\big[L_4,B_{\vec{a}}^g\big]=0$ compute the system $\textsc{s}(\vec{a})_g$ as in \eqref{eq-Sa}. \label{step-9}
 \item If a solution $\vec{\alpha}$ of $\textsc{s}(\vec{a})_g$ exists then \underline{\sf return} $B_{\vec{\alpha}}^g$ and $2(2g+1)$.
 \item $g:=g+1$. \label{step-genus+1}
 \item If $g=q$ \underline{\sf return} $M$.
 \item Compute the remainder $\R_{g+1}$ as in~\eqref{eq-recursionR} and $\Delta(M):=\Delta(M)\cup \{\R_{g+1}\}$. \label{step-13}
 \item Use $\Delta(M)$ and \eqref{eq-RB} to construct $Q_{g,B}$.
 \item Define $B_{\vec{a}}^g:=L_4^g Q_{g,B}+B_{\vec{a}}^{g-1}$ and go to step~\ref{step-9}.
\end{enumerate}
\end{Algorithm}

We implemented the algorithm in Maple~18 and we used it to compute the next examples.

\begin{Example}\label{ex3}
Let us continue with Example \ref{ex1} and $L_4$ as in \eqref{eq-nonselfadj1}. From a family of operators of order $10$ commuting with $L_4$ we fix $M$
\begin{gather*}
M= \partial^{10}+\big(5x^4+7/2\big)\partial^8+20\big(4x^3+{\rm i}\big)\partial^7+\big(10x^8+14x^4+640x^2+4\big)\partial^6\\
\hphantom{M=}{} +N_5\partial^5+N_4 \partial^4 +N_3 \partial^3 + N_2 \partial^2 + N_1 \partial + N_0,
\end{gather*}
$N_i\in {\bf C}[x]$ (not included due to their length).
We know that $L_4,M$ is a true rank~2 pair. We run the algorithm \texttt{BC pair} to decide if $L_4$, $M$ is a~BC pair. We fix $g=1$:
\begin{itemize}\itemsep=0pt
 \item $\dres(L_4-\lambda,M-\mu)=h(\lambda,\mu)^2$ with
$h(\lambda,\mu)=\mu^2-b_0(\lambda)-b_1(\lambda)\mu$ where
\begin{gather*} b_0(\lambda)=-\lambda^5+(9/4)\lambda^4+(125/2)\lambda^3+(7825/4)\lambda^2+1548 \lambda+1296,\\
 b_1(\lambda)=3\lambda^3+79\lambda+72.\end{gather*}

\item $M:=M-\frac{1}{2}b_1(L_4 )$ is given by~\eqref{eq-B10ex1} in Example~\ref{ex1}.

\item We compute the reminder sequence $\Delta(M)=\{\R_1,\R_2\}$ to obtain the third order operators
\begin{gather*}
 \R_1= 36+72 {\rm i}x-72 {\rm i}{x}^{5}+108 {x}^{4}+72 {x}^{8}+1728 {x}^{2}
 -72 {\rm i} \big( {-}{x}^{6}+12 {\rm i}{x}^{3}-{x}^{2}-8 \big) \partial\\
\hphantom{\R_1=}{} +\big( 72 {x}^{4}+36+504 {\rm i} x \big) {\partial}^{2}+72 {\rm i} {x}^{2}{\partial}^{3},\\
 \R_2= 8 {x}^{2}+16+8 {x}^{6}-8 {\rm i} {x}^{3}-4 {\rm i} \big( {-}{x}^{4}+12 {\rm i} x-1 \big) \partial+8 {x}^{2}{\partial}^{2}+4 {\rm i} {\partial}^{3}.
\end{gather*}

\item We construct $R_{1,B}=\R_1$ and $Q_{1,B}=\R_2-a_1 \R_1$. Then $B_{\vec{a}}^1=L_4 Q_{1,B}+R_{1,B}$. From $\big[L_4,B_{\vec{a}}^1\big]=0$ we obtain the system $\textsc{s}(a_1 )_1$. All the $120$ polynomials $q_{i,j}(a_1)$ in $\textsc{s}(a_1)_1$ have the form $r_0+r_1a_1$.
From the first two equations
\[
-11296+2889216 a_1 =0 , \qquad 219904 -359424 a_1 =0
\]
we obtain $a_1=353/90288$, and substituting in all the remaining $q_{i,j}(a_1)$ we can conclude that the system $\textsc{s}(a_1)_1$ has no solution.
\end{itemize}
Therefore, in step \ref{step-genus+1} $g:=g+1=2=q$ and the algorithm returns $M=B_{10}$, the operator that was defined in~\eqref{eq-B10ex1} of Example~\ref{ex1}. Therefore the centralizer $\cC(L_4)={\bf C}[L_4,B_{10}]={\bf C}[L_4,M]$.

The \texttt{BC pair} Algorithm can be used to check if a given operator $B$ is a generator of the centralizer for~$L_4$ as in~\eqref{operator-L4general}. For instance, in Case~\ref{ex-1-1} of Example~\ref{ex1}, for a given operator $B$ commuting with $L_4$ the algorithm guarantees if $B$ is a generator of the centralizer~$\cC(L_4) $ or not. If it is, then $\cC(L_4)={\bf C}[L_4,B]$. We run the \texttt{BC pair} algorithm for all cases in Example~\ref{ex1}, even if the operator~$L_4$ was non self-adjoint and we obtained: for
$U(x)=0$ and $W(x)=4x^2+w_0$, then $\cC(L_4)={\bf C}[L_4,B_{6}]$, for an operator $B_6$ of order $6$, and for $W(x)=8x^2+w_0$, then $\cC(L_4)={\bf C}[L_4,B_{10}]$; for $U(x)=\pm 4{\rm i}$ and $W(x)=4x^2+w_0$, then $\cC(L_4)={\bf C}[L_4,B_{6}]$; for $U(x)=\pm 8{\rm i}$ and $W(x)=16x^2+w_0$, then $\cC(L_4)={\bf C}[L_4,B_{10}]$;
for $U(x)=\pm 12{\rm i}$ and $W(x)=12x^2+w_0$, then $\cC(L_4)={\bf C}[L_4,B_{10}]$.
\end{Example}

\begin{Example}\label{ex4}We use the next example to illustrate the structure of the system $\textsc{s}(\vec{a})_g$ as explained in Remark~\ref{rem-Sa}. Let us consider the self-adjoint operator
\begin{gather*}
 L_4=\big(\partial^2+x^4+1\big)^2+24x^2.
\end{gather*}
By \cite[Theorem~2]{Og2016}, this operator commutes with an operator of order $4q+2$ with $q\geq g=2$. We fixed $4q+2=18$, that is $q=4$, and computed an operator $M$ of order $18$ in the centrali\-zer~$\cC(L_4)$. We used a Gr\" unbaum's style approach, forcing $[L_4,M_{18}]=0$ for an arbitrary operator~$M_{18}$ of order~$18$. From the family of operators obtained we chose
\begin{gather*}
M=\partial^{18}+9\big(x^4+1\big)\partial^{16}+288x^3\partial^{15}+\big(36x^8+72 x^4+4572 x^2+15\big)\partial^{14}\\
\hphantom{M=}{}+H_14\partial^{14}+\cdots +H_0
\end{gather*}
with $H_i\in {\bf C}[x]$ (not included due to their length) such that $M^2=R_9(\lambda)$, with
\begin{gather}\label{eq-R9}
 R_9(\lambda)=\big(\lambda^5-5\lambda^4+346\lambda^3+854\lambda^2+24917\lambda+222719\big)\big(\lambda^2-23\lambda-58939\big)^2.
\end{gather}
We run the algorithm \texttt{BC pair} for $g=1$, computing $\Delta(M)=\{\R_1,\R_2\}$ and $B_{\vec{a}}^1$ as we did in Example~\ref{ex3}. We can check that the system $\textsc{s}(a_1)_1$ has no solution. Thus we set $g:=2$ and go to step \ref{step-13} of the algorithm:
\begin{itemize}\itemsep=0pt
 \item Compute $\R_3$ and define $\Delta(M)=\{\R_1,\R_2,\R_3\}$,
 \begin{gather*}
 \R_1= -8487216 {x}^{8}+707268 {x}^{6}-17033371 {x}^{4}-253909212 {x}^{2}-5009815\\
\hphantom{\R_1=}{} + \big( {-}101846592 {x}^{3}+4243608 x \big) {\partial}
 + \left( -8487216 {x}^{4}+707268 {x}^{2}-8546155 \right) {{\partial}}^{2},\\
 \R_2= -3312 {x}^{8}-706992 {x}^{6}+111231 {x}^{4}-806352 {x}^{2}-3420417\\
\hphantom{\R_2=}{} + \big( {-}39744 {x}^{3}-4241952 x \big) {\partial}
 + \big( {-}3312 {x}^{4}-706992 {x}^{2}+114543 \big) {{\partial}}^{2},\\
 \R_3= 144 {x}^{8}-288 {x}^{6}-58604 {x}^{4}+4032 {x}^{2}-60188\\
 \hphantom{\R_3=}{}+ \big( 1728 {x}^{3}-1728 x \big) {\partial} + \big( 144 {x}^{4}-288 {x}^{2}-58748 \big) {{\partial}}^{2}.
 \end{gather*}

 \item Construct $Q_{2,B}=\R_1\big(a_1^2-a_2\big)-\R_2 a_1+\R_3$. Define $B_{\vec{a}}^2:=L_4^2 Q_{2,B}+B_{\vec{a}}^1$ and go to step~\ref{step-9}.

 \item From $\big[L_4,B_{\vec{a}}^2\big]=0$ compute the system $\textsc{s}(a_1,a_2)_2$. All the $112$ polynomials $q_{i,j}(a_1,a_2)$ in this system have the form $r_0+r_1 a_1+r_2 (a_1^2-a_2)$, $r_i\in {\bf C}$. Let us take two equations of system $\textsc{s}(a_1,a_2)_2$
 \begin{gather*}
 135795456 {{a_1}}^{2}-52992 { a_1}-135795456 { a_2}-2304=0,\\
 -5658144 {{a_1}}^{2}-5655936 { a_1}+5658144 {a_2}+2304=0.
 \end{gather*}
 Observe that this kind of system can be solved linearly, and its unique solution is $(\alpha_1^*,\alpha_2^*)=(23/58939,-1/58939)$. We can check that $q_{i,j}(\alpha_1^*,\alpha_2^*)=0$ for every equation in system $\textsc{s}(a_1,a_2)_2$. Therefore $(\alpha_1^*,\alpha_2^*)$ is the unique solution of system $\textsc{s}(a_1,a_2)_2$.

 \item The algorithm returns $B_{10}=B_{(\alpha_1^*,\alpha_2^*)}^2$
 \begin{gather*} B_{10}=\partial^{10}+5\big(x^4+1\big)\partial^8+80 x^3\partial^7+10 \big( x^8+2 x^4+66 x^2+1\big)\partial^6\\
 \hphantom{B_{10}=}{} +E_5\partial^5+\cdots +E_0,\end{gather*}
 with $E_i\in {\bf C}[\lambda]$ (not included due to their length), where $B_{10}^2=R_5(L_4)$ with
 \[R_5(\lambda)=\lambda^5-5\lambda^4+346\lambda^3+854\lambda^2+24917\lambda+222719.\]
\end{itemize}
 Therefore $L_4$, $M$ is not a BC pair and we constructed the BC pair $L_4$, $B_{10}$ such that
 \[{\bf C}[L_4,M]\subset {\bf C}[L_4,B_{10}]=\cC(L_4).\]
\end{Example}

\subsection*{Acknowledgements} The authors would like to thank the
 organizers of the conference AMDS2018 that took place in Madrid, for
 giving them the opportunity to collaborate on these topics, of common
 interest for a long time, and finally write this paper together.
The authors would like to thank the anonymous referees who have helped to improve the final version of this work.
S.L.~Rueda is partially supported by Research Group ``Modelos ma\-tem\'aticos no lineales''.

M.A.~Zurro is partially supported by Grupo UCM 910444.

\pdfbookmark[1]{References}{ref}
\LastPageEnding

\end{document}